\newcommand{\re}{\mathbb{R}}\newcommand{\N}{\mathbb{N}}
\newcommand{\zz}{\mathbb{Z}}\newcommand{\C}{\mathbb{C}}
\newcommand{\Z}{{\zz}^n}
\newcommand{\R}{{\re}^n}
\newcommand{\cs}{{\mathcal S}}
\newcommand{\cl}{{\mathcal L}}
\newcommand{\cf}{{\mathcal F}}
\newcommand{\cfi}{{\cf}^{-1}}
\newcommand{\supp}{{\rm supp \, }}
\newcommand{\sign}{{\rm sign \, }}
\newcommand{\ls}{\lesssim}
\newcommand{\bproof}{\begin{proof}}
\newcommand{\eproof}{\end{proof}}
\newcommand{\be}{\begin{equation}}
\newcommand{\ee}{\end{equation}}
\newcommand{\beq}{\begin{eqnarray}}
\newcommand{\beqq}{\begin{eqnarray*}}
\newcommand{\eeq}{\end{eqnarray}}
\newcommand{\eeqq}{\end{eqnarray*}}
 \newtheorem{thm}{Theorem}[section]
 \newtheorem{cor}[thm]{Corollary}
 \newtheorem{lem}[thm]{Lemma}
 \newtheorem{prop}[thm]{Proposition}
 \theoremstyle{definition}
 \newtheorem{defn}[thm]{Definition}
 \theoremstyle{remark}
 \newtheorem{rem}[thm]{Remark}
 \numberwithin{equation}{section}
\begin{document}

\title{Multiplication and Composition in  Weighted Modulation Spaces}

\author[Reich]{Maximilian Reich}
\address{%
TU Bergakademie Freiberg\\
Institut f\"ur Angewandte Analysis\\
09596 Freiberg\\
Germany}
\email{maximilian.reich@math.tu-freiberg.de}
\author[Sickel]{Winfried Sickel}
\address{%
Friedrich-Schiller-University Jena \\
Ernst-Abbe-Platz 2 \\
07737 Jena \\
Germany}
\email{winfried.sickel@uni-jena.de}

\subjclass{46E35, 47B38, 47H30}

\keywords{Weighted modulation spaces, short-time Fourier transform, frequency-uniform decomposition, multiplication of distributions, 
multiplication algebras, composition of functions.}

\date{today}

\begin{abstract}
We study the existence of the product of two weighted modulation spaces.
For this purpose we discuss two different strategies.
The more simple one allows transparent proofs in various situations.
However, our second method allows a closer look onto  associated 
norm inequalities under restrictions in the Fourier image.
This will give us the opportunity to treat the boundedness of composition operators. 
\end{abstract}

\maketitle


\section{Introduction} \label{introduction}


Since modulation spaces have been introduced by Feichtinger \cite{feichtinger}
they have become canonical for both time-frequency and phase-space analysis.
However, in recent time modulation spaces have been found useful also in connection with linear and nonlinear partial differential equations, see, e.g.,
Wang et all \cite{wangExp,wang,wang1,wang2}, Ruzhansky, Sugimoto and Wang \cite{ruzhansky} or  Bourdaud, Reissig, S. \cite{brs}.
Investigations of partial differential equations require partly different tools than used in  time-frequency and phase-space analysis.
In particular, Fourier multipliers,  pointwise multiplication and composition of functions need to be studied.
In our contribution we will concentrate on pointwise multiplication and composition of functions.
Already Feichtinger \cite{feichtinger} was aware of the importance of pointwise multiplication in modulation spaces. 
In the meanwhile several authors have studied this problem, we refer, e.g., to \cite{cordero},  \cite{iwabuchi}, \cite{Sugi} and \cite{toftCont}, 
\cite{toftAlg}.
In Section \ref{mult} we will give a survey about the known results. Therefore we will discuss two different proof strategies.
The more simple one, due to  Toft \cite{toftCont, toftAlg} and Sugimoto, Tomita and  Wang  \cite{Sugi},   
allows transparent proofs in various situations, in particular one can deal with those situations where the modulation spaces 
form algebras with respect to pointwise multiplication.
As a consequence, Sugimoto et all  \cite{Sugi} are able to deal with composition operators on modulation spaces induced by analytic functions. 
Our second method, much more complicated, allows a closer look onto  associated 
norm inequalities under restrictions in the Fourier image.
This will give us the possibility  to discuss  the boundedness of composition operators on weighted modulation spaces based on a technique which goes back to Bourdaud \cite{Bou},
see also Bourdaud, Reissig, S. \cite{brs} and Reich, Reissig, S. \cite{rrs}.
Our approach will allow to deal with the boundedness of nonlinear operators $T_f:~g \mapsto f \circ g $ without assuming $f$ to be analytic.
However, as the case of $M^s_{2,2}$ shows, our sufficient conditions are not very close to the necessary conditions. There is still a certain gap.
\\
The paper is organized as follows.
In Section \ref{basics} we collect what is needed about the weighted modulation spaces we are interested in.
The next section  is devoted to the study of pointwise multiplication. In particular, we are interested in embeddings 
of the type
\[ M^{s_1}_{p,q} \, \cdot \, M^{s_2}_{p,q} \hookrightarrow M^{s_0}_{p,q}\, , \]
where $s_1,s_2, p$ and $q$ are given and we are asking for an optimal $s_0$.
These results will be applied to problems around the regularity of composition of functions  in Section \ref{comp}. 
For convenience of the reader we also recall what is known in the more general 
situation 
\[ M^{s_1}_{p_1,q_1} \, \cdot \, M^{s_2}_{p_2,q_2} \hookrightarrow M^{s_0}_{p,q}\, . \]
Special attention will be  paid to the algebra property.
Here the known sufficient conditions are supplemented by necessary conditions, see Theorem \ref{mult2}.
Also only partly new is our main result in Section \ref{mult} stated in Theorem \ref{algebra12}.
Here we investigate multiplication of distributions (possibly singular) with regular functions (which are not assumed to be $C^\infty$).
Partly we have found necessary and sufficient conditions also in this more general situation.
Finally, Section \ref{comp} deals with composition operators.
As direct consequences of the obtained results for pointwise multiplication 
we can deal with the mappings $g \mapsto g^\ell$, $\ell\ge 2$, see Subsection \ref{poly}.
In Subsection \ref{comp2}
we shall investigate $g \mapsto f \circ g$, where $f$ is not assumed to be analytic.
Sufficient conditions, either in terms of a decay for $\cf f$ or in terms of regularity of $f$, are given.


\subsection*{Notation} 


We introduce some basic notation. 
As usual, $\N$ denotes the natural numbers, $\N_0 := \N \cup \{0\}$,
$\zz$ the integers and
$\re$ the real numbers, $\C$ refers to the complex numbers. For a real number $a$ we put $a_+ := \max(a,0)$.
For $x \in \R$ we use $\|x\|_\infty:= \max_{j=1, \ldots \,,n} \, |x_j|  $. 
Many times we shall use the abbreviation  $\langle \xi\rangle := (1+|\xi|^2)^{\frac{1}{2}}$, $\xi \in \R$.
\\
The symbols  $c,c_1, c_2, \, \ldots  \, ,C, C_1,C_2, \,  \ldots $ denote   positive
constants which are independent of the main parameters involved but
whose values may differ from line to line. 
The notation $a \lesssim b$ is equivalent to $a \leq Cb$ with a positive constant $C$. 
Moreover, by writing $a \asymp b$ we mean $a \lesssim b \lesssim a$.
\\
Let $X$ and $Y$ be  two Banach spaces. Then the symbol $X \hookrightarrow Y$ indicates that the embedding is continuous.
By $\cl (X,Y)$ we denote the collection of all linear and continuous operators which map $X$ into $Y$.
By $C_0^\infty (\R)$ the set of compactly supported infinitely differentiable functions $f:\R \to \C$ is denoted.
Let $\cs (\R)$ be the Schwartz space of all complex-valued rapidly decreasing infinitely differentiable  functions on $\R$. 
The topological dual, the class of tempered distributions, is denoted by $\cs'(\R)$ (equipped with the weak topology).
The Fourier transform on $\cs(\R)$ is given by 
\[
\cf \varphi (\xi) = (2\pi)^{-n/2} \int_{\R} \, e^{ix \cdot \xi}\, \varphi (x)\, dx \, , \qquad \xi \in \R\, .
\]
The inverse transformation is denoted by $\cfi $.
We use both notations also for the transformations defined on $\cs'(\R)$.\\
{\bf Convention.} If not otherwise stated all functions will be considered on the Euclidean $n$-space $\R$.
Therefore $\R$ will be omitted in notation.


\section{Basics on Modulation Spaces} \label{basics}



\subsection{Definitions} \label{Definitions}


A general reference for definition and properties of  weighted modulation spaces  
is Gr\"ochenig's monograph \cite[Chapt.~11]{groechenig}.

\begin{defn} \label{STFT}
Let $\phi \in  \cs $ be nontrivial. 
Then the short-time Fourier transform  of a function $f$ with respect to $\phi$ is defined as
\[ V_\phi f(x,\xi) = (2\pi)^{-\frac{n}{2}} \int_{\R} f(s) \overline{\phi(s-x)} e^{-i s \cdot \xi} \, ds 
\qquad (x,\xi \in \R). 
\]
\end{defn}

The function $\phi$ is usually called the window function. For $f \in \cs'$ 
the short-time Fourier transform $V_\phi f$ is a continuous function of at most polynomial growth on $\re^{2n}$, see \cite[Thm.~11.2.3]{groechenig}. 

\begin{defn} \label{modCont}
Let $1\leq p,q\leq\infty$. Let $\phi\in \cs$ be a 
fixed window and assume $s \in\re$.  Then the weighted modulation 
space ${M}_{p,q}^{s}$ is the collection  of all $f \in \cs'$ such that 
\[
\|f\|_{{M}_{p,q}^{s}} = \Big( \int_{\R} 
\Big( \int_{\R} |V_\phi f(x,\xi) \,  \langle \xi\rangle^s|^p dx \Big)^{\frac{q}{p}} d\xi \Big)^{\frac{1}{q}} < \infty\,  
\]
(with obvious modifications if $p= \infty$ and/or $q=\infty$).
\end{defn}

Formally these spaces ${M}_{p,q}^{s}$ depend on the window $\phi$. However, for different windows $\phi_1, \phi_2$ the resulting spaces 
coincide as sets and the norms are equivalent, see \cite[Prop.~11.3.2]{groechenig}.
For that reason we do not indicate the window in the notation (we do not distinguish spaces which differ only by an equivalent norm).

\begin{rem}\label{hs=ms}
\rm
(i) General references with respect to weighted modulation spaces are 
Feichtinger \cite{feichtinger}, Gr\"ochenig \cite[Chapt.~11]{groechenig}, Gol'dman \cite{go},
Guo et all \cite{guo},  Toft \cite{toftCont}, \cite{toftConv}, \cite{toftAlg}, Triebel \cite{trzaa} and
Wang et. all \cite{wangExp} to mention only a few.
\\
(ii) There is an important special case. In case of $p=q=2$ we obtain $M^s_{2,2} = H^s$ in the sense of equivalent norms, see
Feichtinger \cite{feichtinger}, Gr\"ochenig \cite[Prop.~11.3.1]{groechenig}.
Here $H^s$ is nothing but the standard Sobolev space built on $L_2$, at least for $s\in \N$. In general $H^s$ is the collection of all $f \in \cs'$ such that
\[
 \| f\|_{H^s} := \Big(\int_{\R} (1+|\xi|^2)^{s}\, |\cf f (\xi)|^2 \, d\xi \Big)^{1/2}<\infty.
\]
\end{rem}

For us of great use will be another  alternative approach to the spaces $M^s_{p,q}$.  
This will be more close to the standard techniques used in connection with Besov spaces.
We shall use the so-called frequency-uniform 
decomposition, see , e.g.,  Wang \cite{wang}. 
Therefore, let $\rho: \R \mapsto [0,1]$ be a Schwartz function which 
is compactly supported in the cube 
\[
Q_0 := \{ \xi \in \R: -1\leq \xi_i \leq 1,\:  i=1,\ldots,n \}\, .
\] 
Moreover, we assume 
\[
\rho(\xi)=1 \qquad \mbox{if}  \quad  |\xi_i|\leq \frac{1}{2}\, , \qquad i=1, 2, \ldots , n. 
\]
With $\rho_k (\xi) :=\rho (\xi-k) $, $\xi \in \R$, $k \in \Z$, it follows
\[
\sum_{k \in \Z} \rho_k (\xi)\ge 1 \qquad \mbox{for all}\quad \xi \in \R\, .
\]
Finally we define
\[ 
\sigma_k(\xi) := \rho_k(\xi) \Big(\sum_{k\in\Z} \rho_k(\xi)\Big)^{-1}, \qquad \xi \in \R\, , \quad k\in\Z \, .
\]
The following  properties are obvious: 
\begin{itemize}
			\item $ 0 \le \sigma_k(\xi) \le 1$ for all $\xi \in \R$;
			\item $\supp \sigma_k \subset Q_k := \{ \xi \in \R: -1\leq \xi_i -k_i \leq 1, \: i=1,\ldots,n \} $;
			\item $\displaystyle \sum_{k\in\Z} \sigma_k(\xi) \equiv 1$ for all $\xi\in\R$;
			\item There exists a constant $C>0$ such that $\sigma_k (\xi) \ge C$ if $ \max_{i=1, \ldots , n}\, |\xi_i-k_i|\leq \frac{1}{2}$; 
\item For all $m \in \N_0$ there exist positive constants $C_m$ such that for $|\alpha|\leq m$
\[
\sup_{k \in \Z}\, \sup_{\xi \in \R} \, |D^\alpha \sigma_k(\xi)|\leq C_m\,   .
\]
\end{itemize}
We shall call the mapping
\[ \Box_k f := \cfi \left[ \sigma_k (\xi) \, \cf f(\xi)\right] (\cdot ), \qquad k\in\Z, \quad f \in \cs'\, , 
\]
frequency-uniform decomposition operator. 

As it is well-known there is an equivalent description of the 
modulation spaces by means of the frequency-uniform decomposition operators.

\begin{prop} \label{defdecomp}
Let $1\leq p,q \leq \infty$ and assume $s \in\re$. 
Then the weighted modulation space $M_{p,q}^{s}$ consists of all tempered distributions $f\in \cs'$ such that 
\[ \|f\|^*_{M_{p,q}^{s}} = \Big( \sum_{k\in\Z} \langle k \rangle^{sq} \|\Box_k f\|_{L^p}^q \Big)^{\frac{1}{q}} < \infty \, . 
\]
Furthermore, the norms $ \|f\|_{M_{p,q}^{s}}$ and  $\|f\|^*_{M_{p,q}^{s}}$ are equivalent.
\end{prop}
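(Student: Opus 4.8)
The plan is to establish the equivalence $\|f\|_{M^s_{p,q}} \asymp \|f\|^*_{M^s_{p,q}}$ by comparing the two building blocks: the short-time Fourier transform $V_\phi f(x,\xi)$ localized near a fixed frequency $\xi$, and the frequency-uniform piece $\Box_k f$, whose Fourier transform is supported in the cube $Q_k$. The underlying reason the two norms match is that both gadgets decompose the frequency axis into unit-sized boxes centered at the lattice $\Z$: the weight $\langle\xi\rangle^s$ in the STFT norm is comparable to $\langle k\rangle^s$ whenever $\xi \in Q_k$ (since $|\xi - k| \le \sqrt n$ forces $\langle\xi\rangle \asymp \langle k\rangle$ with constants depending only on $n$ and $s$), so the $\ell^q$-sum over $k$ and the integral in $\xi$ with weight $\langle\xi\rangle^{sq}$ are interchangeable up to constants.

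Let me think about what I would actually prove. First I would recall from \cite[Chapt.~11]{groechenig} that the Definition~\ref{modCont} norm is independent of the window $\phi$ up to equivalence; this lets me choose $\phi$ conveniently. The cleanest route is to relate $V_\phi f(\cdot,\xi)$ to $\Box_k f$ directly. For each fixed $\xi$, the map $x \mapsto V_\phi f(x,\xi)$ is, up to a modulation, a convolution of $f$ with a translate of $\overline{\phi}$ whose Fourier transform concentrates near $\xi$. I would fix a concrete window, say $\phi = \cfi \rho_0$ or a Gaussian, and show that for $\xi$ in a unit neighborhood of $k$ one has $\|V_\phi f(\cdot,\xi)\|_{L^p} \asymp \|\Box_k f\|_{L^p}$ in an integrated sense. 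Concretely, I would verify the two inequalities separately:
\begin{itemize}
\item For the estimate $\|f\|^*_{M^s_{p,q}} \lesssim \|f\|_{M^s_{p,q}}$, I would use an inversion/reconstruction formula writing $\Box_k f$ as an average of $V_\phi f(\cdot,\xi)$ over $\xi \in Q_k$ (since $\sigma_k$ is supported there), apply Minkowski's integral inequality in $L^p$ to pull the $L^p$-norm inside the $\xi$-integral, and then bound the integral over $Q_k$ by its $L^q(d\xi)$ norm over that cube using H\"older, summing in $k$.
\item For the reverse estimate, I would run the same comparison in the other direction, using that the $\sigma_k$ form a partition of unity so that $V_\phi f(x,\xi) = \sum_{j} V_\phi (\Box_j f)(x,\xi)$ with only finitely many overlapping $j$ (the supports $Q_j$ have bounded overlap), which controls the $\xi$-integral by the discrete sum.
\end{itemize}

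The main obstacle, and where I would spend most of the care, is handling the mismatch between the \emph{continuous} integration over $x$ and $\xi$ on one side and the \emph{discrete} $\ell^q$-sum over $k$ on the other, while keeping all constants uniform in $k$. The fifth bullet property of the $\sigma_k$ (uniform bounds on all derivatives $D^\alpha\sigma_k$) is exactly the tool that guarantees the convolution kernels $\cfi\sigma_k$ have $L^1$-norms bounded uniformly in $k$, so that $\Box_k$ acts boundedly on $L^p$ with a $k$-independent constant (a Young's-inequality argument). The finite-overlap property of the cubes $Q_k$ then prevents the change from sum to integral (and back) from losing more than a fixed constant factor. Once the weight equivalence $\langle\xi\rangle^s \asymp \langle k\rangle^s$ on $Q_k$ and these two uniformity facts are in place, both inequalities follow by routine application of Minkowski's and H\"older's inequalities, and the equivalence of norms is established. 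Since this equivalence is classical (it appears essentially in Wang \cite{wang}), I would present it as a sketch, emphasizing the weight comparison and the uniform boundedness of $\Box_k$ on $L^p$ as the two points that make the interchange of sum and integral legitimate.
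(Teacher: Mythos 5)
The paper does not actually prove Proposition \ref{defdecomp}: immediately after the statement it writes ``We refer to Feichtinger \cite{feichtinger} or Wang and Hudzik \cite{wang}'', so there is no in-paper argument to compare yours against. Your sketch is the standard proof found in those references, and its skeleton is sound: choose a window $\phi$ with $\cf\phi$ compactly supported near the origin, observe that $|V_\phi f(x,\xi)|=|(f*M_\xi\phi^*)(x)|$ is a frequency localization of $f$ near $\xi$, use $\langle\xi\rangle^s\asymp\langle k\rangle^s$ on $Q_k$ together with the uniform bound $\sup_k\|\cfi\sigma_k\|_{L_1}<\infty$ and the bounded overlap of the cubes, and convert between $\int d\xi$ and $\sum_k$ by H\"older and Minkowski. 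One point where your wording is looser than the argument needs to be: ``writing $\Box_k f$ as an average of $V_\phi f(\cdot,\xi)$ over $\xi\in Q_k$'' should not be read off the full STFT inversion formula (which also integrates over $x$ and would only yield an $L_1$-in-$x$ bound after Minkowski). The clean version is either to normalize $\int\overline{\cf\phi}\,d\xi=1$ so that $\sigma_k(\eta)\cf f(\eta)=\int\sigma_k(\eta)\,\overline{\cf\phi(\eta-\xi)}\,\cf f(\eta)\,d\xi$ with the integrand supported in a bounded neighbourhood $Q_k^*$ of $Q_k$, giving $\|\Box_k f\|_{L_p}\le C\int_{Q_k^*}\|V_\phi f(\cdot,\xi)\|_{L_p}\,d\xi$ by Young plus Minkowski, or to divide by $\overline{\cf\phi(\eta-\xi)}$ (nonvanishing on $Q_k-\xi$ for a suitably chosen window) to get the pointwise-in-$\xi$ bound $\|\Box_k f\|_{L_p}\le C\|V_\phi f(\cdot,\xi)\|_{L_p}$ for every $\xi\in Q_k$. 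Either repair is routine, and your converse direction (finite overlap of the $Q_j$ meeting $\xi+\supp\cf\phi$, plus a uniform Fourier multiplier bound) is correct as stated; since the paper itself delegates the proof to the literature, presenting the argument at this level of detail is entirely consistent with its treatment.
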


We refer to  Feichtinger \cite{feichtinger} or Wang and  Hudzik  \cite{wang}. 
In what follows we shall  work with both characterizations. In general we shall use the same notation $\|\, \cdot \, \|_{M_{p,q}^{s}}$ for both norms.

\begin{lem}\label{basic}
(i) The modulation space ${M}_{p,q}^{s}$ is a Banach space.
\\
(ii) ${M}_{p,q}^{s}$ is independent of the choice of the window $\rho\in C_0^\infty $ in the sense of equivalent norms.
\\
(iii) $M^s_{p,q}$ is continuously embedded into $\cs'$.
\\
(iv) ${M}_{p,q}^{s}$ has the Fatou property, i.e., if $(f_m)_{m=1}^\infty \subset {M}_{p,q}^{s}$ 
is a sequence such that 
$ f_m \rightharpoonup f $ (weak convergence in $\cs'$) and 
\[
 \sup_{m \in \N}\,  \|\, f_m \, \|_{{M}_{p,q}^{s}} < \infty\, , 
\]
then $f \in {M}_{p,q}^{s}$ follows and 
\[
\|\, f \, \|_{{M}_{p,q}^{s}} \le \sup_{m\in \N}\,  \|\, f_m \, \|_{{M}_{p,q}^{s}} < \infty\, .
\]
\end{lem}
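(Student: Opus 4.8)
The plan is to prove the four assertions of Lemma~\ref{basic} using the discrete characterization from Proposition~\ref{defdecomp}, since the $\ell^q(\langle k\rangle^{s}\,\cdot\,)$-structure of $\|\,\cdot\,\|^*_{M^s_{p,q}}$ makes the Banach-space and Fatou arguments transparent. For part (i), I would first verify that $\|\,\cdot\,\|^*_{M^s_{p,q}}$ is a norm: homogeneity and the triangle inequality follow from Minkowski's inequality in $L^p$ together with the triangle inequality in the weighted sequence space $\ell^q(\langle k\rangle^{s})$, while definiteness uses that $\sum_{k\in\Z}\sigma_k\equiv 1$, so $\|f\|^*_{M^s_{p,q}}=0$ forces every $\Box_k f=0$ and hence $\cf f=\sum_k \sigma_k\,\cf f=0$. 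Completeness is the substantive point: given a Cauchy sequence $(f_m)$, each coordinate sequence $(\Box_k f_m)_m$ is Cauchy in $L^p$, hence converges to some $g_k\in L^p$; I would then assemble a candidate limit and show it lies in $M^s_{p,q}$ with $f_m\to f$ in norm. The cleanest route is to invoke part~(iii), so that $f_m$ converges in $\cs'$, identify the $\cs'$-limit $f$, and check $\Box_k f=g_k$; completeness then follows from a Fatou-type estimate on the partial differences.

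For part (ii), window-independence, the key mechanism is that two admissible systems $\{\sigma_k\}$ and $\{\sigma_k'\}$ have supports overlapping only finitely many neighbours: since $\supp\sigma_k'\subset Q_k$ meets only $Q_j$ with $\|j-k\|_\infty\le 2$, one can write $\Box_k' f=\sum_{\|j-k\|_\infty\le 2}\Box_k'\Box_j f$. I would estimate $\|\Box_k'\Box_j f\|_{L^p}\lesssim\|\Box_j f\|_{L^p}$ by realizing $\Box_k'$ as convolution with a kernel whose $L^1$-norm is uniformly bounded in $k$; here the uniform derivative bounds $\sup_k\sup_\xi|D^\alpha\sigma_k'|\le C_m$ listed before the definition of $\Box_k$ give, via integration by parts, the required decay of $\cfi[\sigma_k']$. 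Summing over the finitely many $j$ and using $\langle j\rangle\asymp\langle k\rangle$ on that neighbourhood yields $\|f\|^*_{M^s_{p,q},\sigma'}\lesssim\|f\|^*_{M^s_{p,q},\sigma}$, and by symmetry the two norms are equivalent.

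Part (iii), the embedding $M^s_{p,q}\hookrightarrow\cs'$, I would obtain by testing: for $\varphi\in\cs$ one has $\langle f,\varphi\rangle=\sum_k\langle\Box_k f,\varphi\rangle$ (again using $\sum_k\sigma_k\equiv1$), and each term is controlled by $\|\Box_k f\|_{L^p}$ against a suitable Schwartz seminorm of $\varphi$, with a factor $\langle k\rangle^{-N}$ coming from the rapid decay of $\cf\varphi$ localized near $Q_k$. Choosing $N$ large enough to absorb the weight $\langle k\rangle^{-s}$ and to make $\sum_k\langle k\rangle^{-N+|s|}$ summable (after splitting off the $\ell^q$ factor by H\"older) bounds $|\langle f,\varphi\rangle|$ by a constant times $\|f\|^*_{M^s_{p,q}}$ times a seminorm of $\varphi$, which is exactly continuity. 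Finally, for the Fatou property (iv), I would fix $k$ and note that weak convergence $f_m\rightharpoonup f$ in $\cs'$ implies $\cf f_m\rightharpoonup\cf f$ and hence $\Box_k f_m\rightharpoonup\Box_k f$ in $\cs'$; since $\sigma_k$ is compactly supported, $\Box_k f_m$ also converges in a weak $L^p$ sense on each ball, and the lower semicontinuity of the $L^p$-norm under such convergence gives $\|\Box_k f\|_{L^p}\le\liminf_m\|\Box_k f_m\|_{L^p}\le\sup_m\|f_m\|^*_{M^s_{p,q}}$. Applying the monotone convergence theorem (or Fatou) to the sum over $k$ with weights $\langle k\rangle^{sq}$ then yields $\|f\|^*_{M^s_{p,q}}\le\sup_m\|f_m\|^*_{M^s_{p,q}}$.

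The main obstacle I anticipate is the weak-to-$L^p$ lower semicontinuity step in part~(iv) when $p=\infty$, where $L^\infty$ is not the dual of a separable space in the convenient way and weak-$*$ arguments must replace weak ones; the compact support of $\sigma_k$ is what rescues the situation, since it confines $\Box_k f_m$ to a fixed frequency band and lets one pass to the limit against a dense set of test functions, but the $p=\infty$ and $q=\infty$ endpoint modifications need to be stated carefully rather than waved through.
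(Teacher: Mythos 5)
Your proposal is correct. For parts (i)--(iii) the paper simply cites Gr\"ochenig's monograph, so your sketches (norm axioms plus completeness via the $\ell^q(\langle k\rangle^s)$-valued $L^p$ structure, window-independence via the finite overlap of the $Q_k$ and a uniform Young/Fourier-multiplier bound $\|\cfi\sigma_k'\|_{L_1}=\|\cfi\sigma_0'\|_{L_1}$, and the embedding into $\cs'$ by testing with a $\langle k\rangle^{-N}$ gain from the decay of $\cf\varphi$ on $Q_k$) supply standard arguments the paper does not spell out; they are all sound. For part (iv), the only part the paper actually proves, your route differs in one mechanism: the paper observes that $\Box_k f_m(x)=(2\pi)^{-n/2}\, f_m\big((\cfi\sigma_k)(x-\cdot)\big)$ is the pairing of $f_m$ with a fixed Schwartz function, so weak convergence in $\cs'$ gives \emph{pointwise} convergence $\Box_k f_m(x)\to\Box_k f(x)$ for every $x$, and then the classical Fatou lemma in the $x$-integral (plus superadditivity of $\liminf$ over the finite sum $|k|\le N$ and a monotonicity passage $N\to\infty$) finishes the proof. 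You instead use weak lower semicontinuity of the $L^p$-norm, i.e.\ duality of $\Box_k f$ against $C_0^\infty$ test functions, followed by Fatou for the counting measure in $k$. Both work; your duality formulation in fact resolves the $p=\infty$ worry you raise at the end (the supremum over $\varphi\in C_0^\infty$ with $\|\varphi\|_{L_1}\le 1$ simultaneously shows $\Box_k f\in L_\infty$ and gives the $\liminf$ bound), whereas the paper's pointwise-convergence route sidesteps the endpoint issue entirely and is slightly more economical. Your phrase ``converges in a weak $L^p$ sense on each ball'' is a small red herring --- the frequency-compactness of $\sigma_k$ does not localize $\Box_k f_m$ in space, and no spatial localization is needed --- but it does not affect the validity of the argument.
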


\begin{proof}
For (i), (ii), (iii) we refer to \cite{groechenig}.\\
We comment on a proof of (iv). Therefore we follow \cite{fr} and work 
with the norm $\|\, \cdot \, \|^*_{M_{p,q}^{s}}$.  From the assumption we obtain  that for all $k\in \Z$ and $x\in\R$,
\begin{eqnarray*}
\cfi \, [\sigma_k \, \cf f_m](x)=  (2 \pi)^{-n/2} \, f_m(x-\cdot)(\sigma_k) \to f(x-\cdot)(\sigma_k)= 
\cfi \, [\sigma_k \, \cf f](x) 
\end{eqnarray*}
as $m\to\infty$. Fatou's lemma yields
\begin{eqnarray*}
\sum_{|k| \le N}  \Big(\int_{\R} \, && \hspace{-0.7cm} |\cfi  \, [\sigma_k \, \cf f] (x)|^p dx\, \Big)^{\frac qp} 
\\
&\le & \liminf_{m\to\infty} \sum_{|k|\le N} \, \Big(\int_{\R} \,  |\cfi \, [\sigma_k \, \cf f_m](x)|^p dx \, \Big)^{\frac qp} \, .
\end{eqnarray*}
An obvious monotonicity argument completes the proof.
\end{proof}


\subsection{Embeddings}


Obviously the  spaces ${M}_{p,q}^{s}$ are monotone in $s$ and $q$. But they are also monotone with respect to $p$. 
To show this we recall Nikol'skij's inequality, see, e.g.,  Nikol'skij \cite[3.4]{Ni} or Triebel \cite[1.3.2]{triebel}.

\begin{lem} \label{nikolskij}
Let $1\leq p \leq q \leq\infty$ and $f$ be an integrable function with $\supp \cf f \subset B(y,r)$, i.e., 
the support of the Fourier transform of $f$ is contained in a ball with radius $r>0$ and center in $y \in \R$. Then it holds
		\[ \|f\|_{L_q} \leq C r^{n(\frac{1}{p}-\frac{1}{q})} \|f\|_{L_p} \]
with a constant $C>0$ independent of $r$ and $y$.
\end{lem}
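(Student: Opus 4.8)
The plan is to use the classical reproducing-kernel argument, reducing everything to a single convolution estimate settled by Young's inequality. First I would normalize the position of the Fourier support. Since multiplication of $f$ by the modulation $e^{-i y \cdot x}$ translates the Fourier transform by $y$ while leaving $|f(x)|$ — and hence every $L_p$-norm — unchanged, I may assume without loss of generality that $y = 0$, i.e.\ $\supp \cf f \subset B(0,r)$. The case $p = q$ is trivial (the factor $r^{n(1/p-1/q)}$ equals $1$), so I assume $p < q$ in what follows.

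Next I would construct a reproducing function adapted to the radius $r$. Fix $\psi \in \cs$ with $\cf \psi \equiv 1$ on $B(0,1)$, and set $\psi_r(x) := r^n \, \psi(r x)$, so that $\cf \psi_r (\xi) = \cf \psi(\xi/r) \equiv 1$ on $B(0,r)$. Because $\cf f$ is supported in $B(0,r)$ we have $\cf \psi_r \cdot \cf f = \cf f$, which — up to the normalizing constant $c$ fixed by the chosen Fourier convention — is the Fourier image of the identity $f = c\, \psi_r * f$.

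The estimate then follows from Young's inequality $\|\psi_r * f\|_{L_q} \le \|\psi_r\|_{L_t}\, \|f\|_{L_p}$, where the exponent $t$ is determined by $1 + \tfrac1q = \tfrac1t + \tfrac1p$, that is $\tfrac1t = 1 - (\tfrac1p - \tfrac1q)$. Since $1 \le p \le q \le \infty$ we have $\tfrac1p - \tfrac1q \in [0,1]$, whence $t \in [1,\infty]$ and $\psi \in \cs$ guarantees $\|\psi\|_{L_t} < \infty$. A direct scaling computation gives $\|\psi_r\|_{L_t} = r^{\,n(1 - 1/t)} \|\psi\|_{L_t} = r^{\,n(1/p - 1/q)} \|\psi\|_{L_t}$, and combining this with the convolution identity and Young's inequality yields $\|f\|_{L_q} \le C \, r^{\,n(1/p-1/q)} \|f\|_{L_p}$ with $C = c\,\|\psi\|_{L_t}$ independent of $r$ and $y$.

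The argument is routine; the only points requiring care are the bookkeeping of the dilation exponents (so that the power of $r$ produced by $\|\psi_r\|_{L_t}$ matches the claimed $n(1/p - 1/q)$) and the verification that the Young exponent $t$ remains in the admissible range $[1,\infty]$, including the endpoint cases $p=1$ and $q=\infty$. I expect no genuine obstacle beyond keeping the normalizing constant of the Fourier transform consistent throughout.
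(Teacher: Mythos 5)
Your proof is correct. The paper does not actually prove this lemma---it only cites Nikol'skij and Triebel---and your reproducing-kernel argument ($f = c\,\psi_r * f$ followed by Young's inequality with the scaling of $\|\psi_r\|_{L_t}$) is precisely the classical proof given in those references for the range $1\le p\le q\le\infty$, with all exponents handled correctly including the endpoints. One cosmetic remark: with this paper's Fourier convention $\cf \varphi(\xi)=(2\pi)^{-n/2}\int e^{+ix\cdot\xi}\varphi(x)\,dx$, the modulation that translates $\supp \cf f$ from $B(y,r)$ to $B(0,r)$ is $e^{+iy\cdot x}$ rather than $e^{-iy\cdot x}$; this sign does not affect the argument.
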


This implies $\|\Box_k f\|_{L_q} \le c \, \|\Box_k f\|_{L_p}$ if $p \le q$ with $c$ independent of $k$ and $f$ which results in the following
corollary (by using the norm $\|\, \cdot \, \|^*_{M_{p,q}^{s}}$).

\begin{cor}\label{einbettung}
Let $s_0 > s$, $p_0 < p $ and $q_0 < q$.
Then the following  embeddings hold and are continuous:
\[
{M}_{p,q}^{s_0} \hookrightarrow {M}_{p,q}^{s}\, , \qquad M_{p_0,q}^{s} \hookrightarrow {M}_{p,q}^{s} 
\]
and
\[
{M}_{p,q_0}^{s} \hookrightarrow {M}_{p,q}^{s}\, ; 
\]
i.e., for all $p,q$, $1\le p,q\le \infty$, we have 
\[
{M}_{1,1}^{s} \hookrightarrow 
{M}_{p,q}^{s} \hookrightarrow  {M}_{\infty,\infty}^{s}\, .
\]
\end{cor}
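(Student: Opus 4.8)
The plan is to work entirely with the equivalent norm $\|\cdot\|^*_{M^s_{p,q}}$ supplied by Proposition \ref{defdecomp} and to treat the three stated embeddings separately, since each amounts to a monotonicity in one of the three parameters of the expression $\big(\sum_{k\in\Z}\langle k\rangle^{sq}\|\Box_k f\|^q_{L_p}\big)^{1/q}$. Chaining the three one-parameter results then yields the displayed extremal embedding $M^s_{1,1}\hookrightarrow M^s_{p,q}\hookrightarrow M^s_{\infty,\infty}$.

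First I would dispose of the monotonicity in $s$ and in $q$, both of which are elementary. For $s$: since $\langle k\rangle\ge 1$ for every $k\in\Z$, we have $\langle k\rangle^{s}\le\langle k\rangle^{s_0}$ whenever $s\le s_0$; raising to the power $q$ and summing termwise gives $\|f\|^*_{M^s_{p,q}}\le\|f\|^*_{M^{s_0}_{p,q}}$ with constant $1$ (and the obvious supremum interpretation when $q=\infty$). For $q$: this is purely a statement about sequence spaces. Fixing the nonnegative sequence $a_k:=\langle k\rangle^s\|\Box_k f\|_{L_p}$, the elementary embedding $\ell_{q_0}\hookrightarrow\ell_{q}$ valid for $q_0\le q$ yields $\|(a_k)\|_{\ell_q}\le\|(a_k)\|_{\ell_{q_0}}$, that is, $\|f\|^*_{M^s_{p,q}}\le\|f\|^*_{M^s_{p,q_0}}$.

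The monotonicity in $p$ is the only part that uses analytic input, namely Nikol'skij's inequality (Lemma \ref{nikolskij}). The key observation is that $\cf\Box_k f=\sigma_k\,\cf f$, so $\supp\cf\Box_k f\subset Q_k$, and $Q_k$ is a cube of fixed side length $2$ centered at $k$, hence contained in the ball $B(k,\sqrt{n})$. Applying Lemma \ref{nikolskij} to $\Box_k f$ with the smaller exponent $p_0$ and larger exponent $p$, and with radius $r=\sqrt{n}$, gives $\|\Box_k f\|_{L_p}\le c\,\|\Box_k f\|_{L_{p_0}}$ for $p_0\le p$, where $c=C(\sqrt{n})^{\,n(1/p_0-1/p)}$. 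Inserting this bound into each summand of the $\ell_q$-sum and pulling the constant out produces $\|f\|^*_{M^s_{p,q}}\le c\,\|f\|^*_{M^s_{p_0,q}}$.

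The one point demanding care is precisely the uniformity of the Nikol'skij constant $c$ over $k$: because all the frequency cubes $Q_k$ are mere translates of a single cube, the radius $r=\sqrt{n}$ entering Lemma \ref{nikolskij} does not grow with $|k|$, so $c$ may be taken independent of $k$ (and of $f$), which is exactly what is needed to pass the estimate under the sum. This is the substance of the remark preceding the corollary. Once the three embeddings are in hand, composing them gives $M^s_{p_0,q_0}\hookrightarrow M^s_{p,q}$ for all $p_0\le p$ and $q_0\le q$, and specializing to the extreme exponents $1$ and $\infty$ yields the final chain.
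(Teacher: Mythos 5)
Your proof is correct and follows essentially the same route as the paper: the monotonicity in $s$ and $q$ is read off directly from the discrete norm $\|\cdot\|^*_{M^s_{p,q}}$, and the monotonicity in $p$ comes from applying Nikol'skij's inequality (Lemma \ref{nikolskij}) to each $\Box_k f$, whose Fourier support lies in a translate of a fixed cube, so the constant is uniform in $k$. Nothing further is needed.
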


Of some importance are embeddings with respect to different metrics.
To find sufficient conditions is not difficult when working with  $\|\, \cdot \, \|^*_{M_{p,q}^{s}}$.
A bit more tricky are the necessity parts.
We refer to the recent paper by Guo et all \cite{guo}.

\begin{prop}\label{einbettung1}
Let $s_0, s_1 \in \re$ and  $1 \le p_0, p_1 \le \infty$.
Then 
\[
{M}_{p_0,q_0}^{s_0} \hookrightarrow {M}_{p_1,q_1}^{s_1}
\]
holds if and only if either
\begin{itemize}
 \item  $p_0 \le  p_1$ and  $s_0 - s_1 > n \Big(\frac{1}{q_1} - \frac{1}{q_0}\Big)$  
 \item or $p_0 \le  p_1$, $s_0 = s_1$ and  $ q_0 = q_1$.  
\end{itemize}
\end{prop}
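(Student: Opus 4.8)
The plan is to move the whole problem to the sequence level via the equivalent norm $\|\cdot\|^*_{M^s_{p,q}}$ of Proposition~\ref{defdecomp}. Since that norm is built from the blocks $\Box_k f$, the asserted embedding $M^{s_0}_{p_0,q_0}\hookrightarrow M^{s_1}_{p_1,q_1}$ is equivalent to the validity, for all $f\in\cs'$, of
\[
\big\|\,(\langle k\rangle^{s_1}\,\|\Box_k f\|_{L^{p_1}})_{k\in\Z}\,\big\|_{\ell^{q_1}}\;\lesssim\;\big\|\,(\langle k\rangle^{s_0}\,\|\Box_k f\|_{L^{p_0}})_{k\in\Z}\,\big\|_{\ell^{q_0}}\,.
\]
Here the two exponents decouple: passing from $p_1$ to $p_0$ is a local ($L^p$) statement about a single block, whose Fourier transform lives in the cube $Q_k$ of fixed radius $\sqrt n$, whereas the interplay of $q_0,q_1$ with the weights $\langle k\rangle^{s_i}$ is a purely discrete weighted-summation question. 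I would prove the two implications of the ``if and only if'' separately.

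For sufficiency assume $p_0\le p_1$ together with the stated relation among $s_0,s_1,q_0,q_1$. Because all cubes $Q_k$ are translates of $Q_0$, Lemma~\ref{nikolskij} yields $\|\Box_k f\|_{L^{p_1}}\le c\,\|\Box_k f\|_{L^{p_0}}$ with $c$ independent of $k$, which removes the $p$-discrepancy and reduces everything to the scalar embedding $\ell^{q_0}(\langle k\rangle^{s_0})\hookrightarrow\ell^{q_1}(\langle k\rangle^{s_1})$ for nonnegative sequences. If $q_0\le q_1$ one combines the inclusion $\ell^{q_0}\hookrightarrow\ell^{q_1}$ (the sequence form of Corollary~\ref{einbettung}) with boundedness of the weight factor $\langle k\rangle^{s_1-s_0}$. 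If $q_0>q_1$ one applies Hölder's inequality with the exponent $r$ given by $\frac1r=\frac1{q_1}-\frac1{q_0}$; this isolates the factor $\big\|(\langle k\rangle^{s_1-s_0})_k\big\|_{\ell^r}$, and $\sum_{k\in\Z}\langle k\rangle^{(s_1-s_0)r}<\infty$ is exactly the condition $s_0-s_1>n\big(\frac1{q_1}-\frac1{q_0}\big)$. The borderline $q_0=q_1,\ s_0=s_1$ is the identity.

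Necessity is the harder direction and I would argue by test functions. Fix $g_0$ with $\cf g_0$ supported in a small cube about the origin and put $g_k(x):=g_0(x)\,e^{ik\cdot x}$; then $\cf g_k$ sits in a translate of that cube, only boundedly many blocks $\Box_j g_k$ are nontrivial, and $\|g_k\|_{M^s_{p,q}}\asymp\langle k\rangle^{s}\,\|g_0\|_{L^p}$. Testing the embedding on such functions forces, for every profile with frequencies confined to one cube, the local inequality $\|g\|_{L^{p_1}}\lesssim\|g\|_{L^{p_0}}$; spreading the profile into well-separated spatial translates $\sum_m c_m\,g_0(\,\cdot-m\,)$ decouples these norms into $\ell^{p_1}$- and $\ell^{p_0}$-norms of $(c_m)$, so the inequality can persist only when $p_0\le p_1$. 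Superposing bumps over many frequencies, $f=\sum_k c_k\,g_k$, then realizes an essentially arbitrary nonnegative finitely supported sequence as $(\|\Box_k f\|_{L^{p_0}})_k$, whereupon the embedding collapses to the scalar weighted sequence inequality; evaluating it on single unit vectors returns $s_0\ge s_1$ in the case $q_0\le q_1$, and on near-extremizers of Hölder's inequality returns $s_0-s_1>n\big(\frac1{q_1}-\frac1{q_0}\big)$, with the correct strictness, in the case $q_0>q_1$. The Fatou property of Lemma~\ref{basic}(iv) allows the passage from finite to infinite sequences where it is needed.

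I expect this necessity step to be the principal obstacle. Making the test functions honestly reproduce a prescribed profile of block norms means controlling the bounded overlaps of the $\sigma_k$ and verifying that exciting one frequency cube does not leak into incomparable blocks; and the decoupling that pins down $p_0\le p_1$ is a quantitative ``reverse Nikol'skij'' phenomenon — the ratio $\|g\|_{L^{p_1}}/\|g\|_{L^{p_0}}$ must be shown to be unbounded when $p_1<p_0$ — which has to be executed with care. The final delicate point is strictness at the threshold when $q_0>q_1$, where the $\ell^r$-extremizer computation must be carried out explicitly rather than by a soft argument.
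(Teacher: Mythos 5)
First, a point of reference: the paper does not actually prove Proposition \ref{einbettung1}. It remarks only that sufficiency ``is not difficult'' when working with the discrete norm of Proposition \ref{defdecomp} and defers the necessity entirely to Guo, Fan, Wu and Zhao \cite{guo}. Your route --- discretize via the $\Box_k$, remove the $p$-discrepancy with Lemma \ref{nikolskij}, reduce to a weighted $\ell^{q_0}\hookrightarrow\ell^{q_1}$ inequality, and test with modulated bumps $\psi(x)e^{ik\cdot x}$ and spatially separated translates --- is the standard one and is essentially what the cited reference does, so there is no methodological divergence to report.

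There is, however, a genuine gap, and it sits exactly where your case analysis and the stated hypotheses fail to match. In the sufficiency step for $q_0\le q_1$ you invoke boundedness of the weight $\langle k\rangle^{s_1-s_0}$, which requires $s_0\ge s_1$; but the first bullet only supplies $s_0-s_1>n\bigl(\frac{1}{q_1}-\frac{1}{q_0}\bigr)$, which for $q_0<q_1$ is a \emph{negative} lower bound and permits $s_0<s_1$. In that regime your argument does not close --- and it cannot, because your own necessity test on a single unit vector ($a_k=\delta_{k,N}$, i.e.\ $f=\psi(x)e^{iN\cdot x}$) gives $\langle N\rangle^{s_1}\lesssim\langle N\rangle^{s_0}$ and hence forces $s_0\ge s_1$ whenever the embedding holds. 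So the two halves of your proposal, carried to completion, would establish the corrected characterization ($p_0\le p_1$ together with either $q_0\le q_1$ and $s_0\ge s_1$, or $q_0>q_1$ and $s_0-s_1>n(\frac{1}{q_1}-\frac{1}{q_0})$), which is the theorem of \cite{guo}, and would simultaneously refute the first bullet as literally printed; you should confront this discrepancy explicitly rather than let the sufficiency and necessity halves silently prove different statements. Beyond that, the necessity half remains a plan rather than a proof: the decoupling $\|\sum_m c_m g_0(\cdot-Rm)\|_{L^p}\to\|(c_m)\|_{\ell^p}\|g_0\|_{L^p}$ needs a limit $R\to\infty$ and the rapid decay of $g_0$ (which cannot be compactly supported in space, since $\cf g_0$ is), and the strictness at the endpoint $s_0-s_1=n(\frac{1}{q_1}-\frac{1}{q_0})$ for $q_0>q_1$ requires an explicit logarithmically corrected sequence in the spirit of Substep 2.3 of the paper's proof of Corollary \ref{einbettung2}; neither step is carried out in the proposal.
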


\begin{rem}
\rm
Embeddings of modulation spaces are treated at various places, we refer to Feichtinger \cite{feichtinger}, 
Wang, Hudzik \cite{wang}, Cordero, Nicola \cite{cordero}, Iwabuchi \cite{iwabuchi} and Guo, Fan, Wu and Zhao \cite{guo}. 
\end{rem}

The weighted modulation spaces $M^s_{p,q}$ cannot distinguish between boundedness and continuity (as Besov spaces).
Let 
$C_{ub}$ denote the class of all uniformly continuous and bounded functions $f:~\R \to \C$ equipped with the supremum norm.
If $f \in M^{s}_{p,q} $ is a regular distribution it is determined (as a function) almost everywhere.
We shall say that $f$ is a continuous function if there is one continuous function $g$ which 
equals $f$ almost everywhere.

\begin{cor}\label{einbettung2}
Let $s \in \re$ and $1 \le p,q\le \infty $.
Then the following assertions are equivalent:
\begin{itemize}
 \item ${M}_{p,q}^{s} \hookrightarrow L_\infty  $;
 \item ${M}_{p,q}^{s} \hookrightarrow C_{ub}  $;
 \item ${M}_{p,q}^{s} \hookrightarrow M^0_{\infty,1}  $;
 \item either $s\ge 0$ and $q=1$ or $s>n/q'$.
\end{itemize}
\end{cor}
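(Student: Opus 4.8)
I would denote by (iv) the arithmetic condition in the last bullet (either $s\ge 0$ and $q=1$, or $s>n/q'$), and begin by recording the elementary but decisive reformulation: \emph{(iv) holds if and only if $(\langle k\rangle^{-s})_{k\in\Z}\in\ell^{q'}$.} Indeed, for $q>1$ this sequence lies in $\ell^{q'}$ precisely when $sq'>n$, i.e.\ $s>n/q'$, and for $q=1$ it lies in $\ell^{\infty}$ precisely when $s\ge 0$. The plan is then to close the cycle
\[
\text{(iv)} \;\Longrightarrow\; M^s_{p,q}\hookrightarrow M^0_{\infty,1} \;\Longrightarrow\; M^s_{p,q}\hookrightarrow C_{ub} \;\Longrightarrow\; M^s_{p,q}\hookrightarrow L_\infty \;\Longrightarrow\; \text{(iv)},
\]
which makes all four assertions equivalent.

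For the first implication I would simply invoke Proposition \ref{einbettung1} with target $M^0_{\infty,1}$: there $p\le\infty$ is automatic and the remaining requirement reads $s>n(1-1/q)=n/q'$, together with the borderline case $s=0$, $q=1$, which is exactly (iv) (in fact Proposition \ref{einbettung1} yields this as an equivalence). Alternatively one argues by hand, using the monotonicity $M^s_{p,q}\hookrightarrow M^s_{\infty,q}$ of Corollary \ref{einbettung} followed by Hölder's inequality
\[
\sum_{k\in\Z}\|\Box_k f\|_{L_\infty} \le \Big(\sum_{k\in\Z}\langle k\rangle^{-sq'}\Big)^{1/q'}\Big(\sum_{k\in\Z}\langle k\rangle^{sq}\|\Box_k f\|_{L_\infty}^q\Big)^{1/q},
\]
the first factor being finite exactly under the reformulation above (with the usual modification for $q=1$). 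The implication $M^0_{\infty,1}\hookrightarrow C_{ub}$ is soft: if $\sum_k\|\Box_k f\|_{L_\infty}<\infty$, then $\sum_k\Box_k f$ converges absolutely in the sup-norm to some $g\in L_\infty$, and $g=f$ in $\cs'$ because $\sum_k\sigma_k\equiv 1$; each $\Box_k f$ is band-limited and bounded, hence uniformly continuous (a Bernstein-type estimate), so the uniform limit $f$ lies in $C_{ub}$ with $\|f\|_{C_{ub}}\le\|f\|_{M^0_{\infty,1}}$. Finally $C_{ub}\hookrightarrow L_\infty$ is the trivial inclusion.

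The only substantial point is the necessity $M^s_{p,q}\hookrightarrow L_\infty\Rightarrow\text{(iv)}$, which I would prove in contrapositive form. Assume $\neg\text{(iv)}$, i.e.\ $(\langle k\rangle^{-s})\notin\ell^{q'}$. Fix $\varphi\in\cs$ with $\varphi(0)\ne 0$ and $\cf\varphi$ supported in a small cube about the origin, and for $c_k\ge 0$ set $f_N=\sum_{\|k\|_\infty\le N}c_k\,e^{ik\cdot x}\varphi$. Since $\cf\varphi(\cdot-k)$ meets only boundedly many cubes $Q_j$, an almost-orthogonality estimate (exploiting the uniform bounds on the derivatives of $\sigma_k$) gives $\|f_N\|_{M^s_{1,q}}\lesssim\big(\sum_{k}\langle k\rangle^{sq}c_k^q\big)^{1/q}$, while $|f_N(0)|=|\varphi(0)|\sum_{\|k\|_\infty\le N}c_k$. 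Because $(\langle k\rangle^{-s})\notin\ell^{q'}$, the functional $(b_k)\mapsto\sum_k\langle k\rangle^{-s}b_k$ is unbounded on $\ell^{q}$, so a standard gliding-hump argument yields a nonnegative $(c_k)$ with $\sum_k\langle k\rangle^{sq}c_k^q<\infty$ but $\sum_k c_k=\infty$ (for $q=\infty$ one may simply take $c_k=\langle k\rangle^{-s}$). Then $\sup_N\|f_N\|_{M^s_{1,q}}<\infty$ whereas $\|f_N\|_{L_\infty}\ge|f_N(0)|\to\infty$, so $M^s_{1,q}\not\hookrightarrow L_\infty$; by the monotonicity $M^s_{1,q}\hookrightarrow M^s_{p,q}$ of Corollary \ref{einbettung}, the failure propagates to every $p$.

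I expect the main obstacle to be precisely this last step: the almost-orthogonality estimate for $\|f_N\|_{M^s_{1,q}}$, which requires controlling the interaction of the modulated bump $e^{ik\cdot x}\varphi$ with the decomposition operators $\Box_j$ for neighbouring indices $j$, together with the sequence-space duality that converts $\neg\text{(iv)}$ into the existence of a coefficient sequence that is summable in the $M^s_{1,q}$-sense yet not in $\ell^1$. Everything else reduces either to a direct citation of Proposition \ref{einbettung1} and Corollary \ref{einbettung} or to a routine convergence argument.
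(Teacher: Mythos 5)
Your proposal is correct, and its skeleton coincides with the paper's: sufficiency is reduced via Proposition \ref{einbettung1} to the single embedding $M^0_{\infty,1}\hookrightarrow C_{ub}$, which is then obtained by summing the band-limited pieces $\Box_k f$ (your "uniform limit of uniformly continuous partial sums" is a slightly cleaner packaging of the paper's explicit $\varepsilon$--$\delta$ argument with the maximal-function estimate), and necessity is tested on exactly the same family $f=\psi(x)\sum_k a_k e^{ik\cdot x}$, for which $\Box_k f$ localizes to the single coefficient $a_k$ and $f(0)=\psi(0)\sum_k a_k$.

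The one genuine difference is how the bad coefficient sequences are produced. The paper splits the necessity into three substeps and writes down explicit logarithmic sequences for each boundary case ($s=0$ with $q>1$; $q=\infty$ with $s=n$; $1<q<\infty$ with $s=n/q'$), then relies on monotonicity in $s$. You instead reformulate the arithmetic condition as $(\langle k\rangle^{-s})_k\in\ell^{q'}$ and derive the existence of a nonnegative $(c_k)$ with $(\langle k\rangle^{s}c_k)_k\in\ell^{q}$ but $\sum_k c_k=\infty$ from the unboundedness of the functional $b\mapsto\sum_k\langle k\rangle^{-s}b_k$ on $\ell^q$, via a gliding-hump/condensation argument. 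This is a legitimate and complete replacement: the hump construction does go through (the head of the functional is always bounded, so unboundedness persists on tails, and disjointly supported humps can be summed), and the explicit choice $c_k=\langle k\rangle^{-s}$ handles $q=\infty$. What this buys is uniformity: all the boundary cases are dispatched at once, including $q=1$ with $s<0$, a case the paper's substeps 2.1--2.3 do not explicitly address (and which does not follow from its other counterexamples by monotonicity in $q$, since that runs in the wrong direction). The price is that the almost-orthogonality bound $\|f_N\|_{M^s_{1,q}}\lesssim\big(\sum_k\langle k\rangle^{sq}c_k^q\big)^{1/q}$ must be argued via finite overlap of $\supp\cf\psi(\cdot-k)$ with the cubes $Q_j$ plus Young's inequality rather than read off as an exact identity, but only the upper bound is needed and it is routine. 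One presentational remark: you disprove the embedding by exhibiting a bounded sequence in $M^s_{1,q}$ with $\|f_N\|_{L_\infty}\to\infty$; since the paper defines $\hookrightarrow$ as continuity of the inclusion this suffices, though you could equally pass to the limit $f=\lim f_N$ (using the Fatou property of Lemma \ref{basic} when $q=\infty$) to produce a single unbounded element of $M^s_{p,q}$, as the paper does.
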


\begin{proof}
We shall work with $\| \, \cdot \, \|^*_{M_{p,q}^{s}}$. \\
Step 1. Sufficiency. By Proposition  \ref{einbettung1} it will be enough to show $M^0_{\infty,1} \hookrightarrow C_{ub}$. From the definition of 
$M^0_{\infty,1}$ it follows that
\[
\sum_{k \in \Z} \Box_k f (x) 
\]
is pointwise convergent (for all $x\in \R$). Furthermore, since $\Box_k f \in C^\infty$, 
there is a continuous representative in the equivalence class $f$, given by $\sum_{k \in \Z} \Box_k f (x)$.
In what follows we shall work with this representative. Boundedness of $f \in M^0_{\infty,1}$ is obvious,
we have 
\[ |f(x)| = |\sum_{k\in \Z} \Box_k f(x)| \le \| \, f \,\|_{M^0_{\infty,1}} \, . \]
It remains to prove uniform continuity.
For fixed $\varepsilon >0 $ we choose $N$ such that 
\[
\sum_{|k|>N} \| \, \Box_k f\, \|_{L_\infty } < \varepsilon /2\, .
\]
In case $|k|\le N $ we observe that 
\[
|\Box_k f(x)- \Box_k f(y)| \le \|\, \nabla  (\Box_k f)  \, \|_{L_\infty} |x-y|\, .
\]
It follows from \cite[Thm.~1.3.1]{triebel} that
\[
\|\, \nabla  (\Box_k f)  \, \|_{L_\infty }\le c_1 \, \| \, (M \Box_k f) \, \|_{L_\infty }
\]
with a constant $c_1$ independent of $f$ and $ k$. Here $M$ denotes the Hardy-Littlewood maximal function. 
In the quoted reference the assumption $\Box_k f \in \cs $ is used.
A closer look at the proof shows that 
$\Box_k f \in L_1^{\ell oc} $ satisfying
\[
\int_{Q_k} |\Box_k f(x)|\, dx \le c_2\, (1+|k|)^N\, , \qquad k \in \Z\, , 
\]
for some $N \in \N$ is sufficient.
Since $\Box_k f \in L_\infty $ this is obvious.
Consequently we obtain
\beqq
|\Box_k f(x)- \Box_k f(y)| & \le &  c_1 \, \| \, (M \Box_k f) \, \|_{L_\infty }\, |x-y| 
\le c_1\, \|\, \Box_k f\, \|_{L_\infty}  \, |x-y|
\\
& \le &  c_2\, \|\,  f\, \|_{L_\infty}   \, |x-y|\, , 
\eeqq
where in the last step we used the standard convolution inequality \\
$\|\, g * h \, \|_{L_\infty}  
\le \, \|\,  g\, \|_{L_1} \|\,  f\, \|_{L_\infty} $.
This implies uniform continuity of $\Box_k f$ and therefore of $\sum_{|k|\le N} \Box_k f$. 
In particular, we find
\beqq
|f(x)-f(y)| & = & \Big| \sum_{k\in \Z } (\Box_k f (x) - \Box_k f (y))  \Big| 
\\
& \le &  \sum_{|k| > N} (|\Box_k f (x)|  + | \Box_k f (y)|) + c_2\, \|\, f \, \|_{L_\infty} \, |x-y| \, \sum_{|k| \le  N} 1    
\\
& \le &  \varepsilon  +  c_2\, \|\, f \, \|_{L_\infty} \, |x-y| \, (2N+1)^n   \, .
\eeqq
Choosing $\delta = (c_2 \,  \|\, f \, \|_{L_\infty}\,  (2N+ 1)^{n})^{-1} \, \varepsilon$
we arrive at
\[
 |f(x)-f(y)| < 2 \, \varepsilon \qquad \mbox{if}\qquad |x-y|< \delta \, .
\]
{\em Step 2.} Necessity.
Let $\psi \in \cs $ be a real-valued function such that $\psi (0)=1$ and 
\[
\supp \cf \psi \subset \{\xi: \max_{j=1, \ldots , n}\, |\xi_j|< \varepsilon\}\,  \qquad \mbox{with}\quad  \varepsilon < 1/2.
\]
We define $f$ by 
\[
\cf f (\xi) := \sum_{k \in \Z} a_k \, \cf \psi (\xi -k)\, . 
\]
Clearly, 
\[
 \Box_k f (x) = a_k \, e^{ikx}\,  \psi (x)\, , \qquad k \in \Z\, .
\]
{\em Substep 2.1.} Let $s=0$ and $1 \le p \le  \infty$.
The above arguments  imply 
$f \in M^0_{p,q}$ if and only if $(a_k)_k \in \ell_q$.
On the other hand,
\be\label{ws-10b}
 f (x) = \psi (x)\, \sum_{k \in \Z} a_k \, e^{ikx}
\ee
which implies that $f $ is unbounded in $0$ if $\sum_{k\in\Z} a_k = \infty$.
Choosing 

\[a_k := \left\{\begin{array}{lll} 
(k_1 \, \log (2+ k_1))^{-1}\, &\qquad & \mbox{if}\quad k_1 \in \N\, , \quad  k=(k_1, 0, \ldots \, 0)\,;
\\
0 && \mbox{otherwise};
\end{array}\right. 
\]
then $f \in M^0_{p,q} \setminus L_\infty  $, $q >1 $, follows.
\\
{\em Substep 2.2.} Let $1 \le p \leq \infty$ and $q=\infty$.
Then we choose $a_k := \langle k\rangle^{-n} $. It follows 
$f \in M^n_{p,\infty}$ but $f(0)=+\infty$.
\\
{\em Substep 2.3.} Let $1\le p \leq \infty$, $1 <q<\infty$ and $s=n/q'$.
Then, with $\delta >0$, we choose

\[a_k := \left\{\begin{array}{lll} 
\langle k \rangle^{-n}  \, \log \langle k \rangle^{-(1+\delta)/q} \, &\qquad & \mbox{if}\quad |k| >0\,;
\\
0 && \mbox{otherwise}.
\end{array}\right. 
\]
It follows
\begin{eqnarray*}
	\| \, f \, \|_{M^{n/q'}_{p,q}} & = &  \|\psi\|_{L_p}\, \sum_{|k|>0} \langle k \rangle^{-nq+ nq/q'}  \, (\log \langle k \rangle)^{-(1+\delta)}  \\
	& = &  \|\psi\|_{L_p}\, \sum_{|k|>0} \langle k \rangle^{-n}  \, (\log \langle k \rangle)^{-(1+\delta)} <\infty\, .
\end{eqnarray*}
On the other hand we have
\[
 f(0) = \sum_{|k|>0} \langle k \rangle^{-n}  \, \log \langle k \rangle^{-(1+\delta)/q}= \infty
\]
if $(1+\delta)/q \le  1$. Hence, for choosing $\delta = q-1$ the claim follows.
\end{proof}

\begin{rem}
 \rm
 Sufficient conditions for embeddings of modulation spaces  into spaces of continuous functions can be found at several places,
 in particular in Feichtinger's original paper \cite{feichtinger}. We did not find references for the necessity.
\end{rem}


\section{Pointwise Multiplication in Modulation Spaces} \label{mult}


We are interested in embeddings  of the type
\[ M^{s_1}_{p,q} \, \cdot \, M^{s_2}_{p,q} \hookrightarrow M^{s_0}_{p,q}\, , \]
where $s_1,s_2, p$ and $q$ are given and we are asking for an optimal $s_0$.
These results will be applied in connection with our investigations on the regularity of compositions of functions  in Section \ref{comp}. 
However, several times we shall deal with the slightly more general problem 
\[ M^{s_1}_{p_1,q} \, \cdot \, M^{s_2}_{p_2,q} \hookrightarrow M^{s_0}_{p,q}\, , \qquad 
\frac 1p = \frac{1}{p_1} + \frac{1}{p_2}\, .
\]
In view of Corollary \ref{einbettung} this always yields
\[ M^{s_1}_{p_1,q} \, \cdot \, M^{s_2}_{p_2,q} \hookrightarrow M^{s_0}_{p,q}\, , \qquad 
\frac 1p \le  \frac{1}{p_1} + \frac{1}{p_2}\, .
\]
For convenience of the reader we also recall what is known in the more general 
situation 
\[ M^{s_1}_{p_1,q_1} \, \cdot \, M^{s_2}_{p_2,q_2} \hookrightarrow M^{s_0}_{p,q}\, . \]
At first we shall deal with the algebra property. Afterwards we  turn to the existence of 
the product in more general situations.


\subsection{On the algebra property} \label{known}


The main aim consists in giving necessary and sufficient conditions for the 
embedding $ M^{s}_{p,q} \, \cdot \, M^{s}_{p,q} \hookrightarrow M^{s}_{p,q}$.
To prepare this we recall a nice identity due to Toft \cite{toftCont}, see also 
Sugimoto, Tomita and Wang \cite{Sugi}.

\begin{lem}\label{lemma1}
Let $\varphi_1, \varphi_2 \in \cs $ be nontrivial. Let $f,g \in L_2^{\ell oc} $ such that there exist $c>0$ and $M>0$ with 
\[
\int_{Q_k} |f(x)|^2 + |g(x)|^2 \, dx \le c \, (1+|k|)^{M} \, , \qquad k \in \Z\, .
\]
For all $x, \xi \in \R$ the following identity takes place
\be\label{ws-1}
 V_{\varphi_1 \, \cdot \, \varphi_2} (fg) (x,\xi) = (2\pi)^{-n/2} \int V_{\varphi_1} (f) (x,\xi-\eta) \, V_{\varphi_2} (g) (x,\eta)\, d\eta \, . 
\ee
\end{lem}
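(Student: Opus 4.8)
The plan is to recognize the right-hand side of \eqref{ws-1} as a convolution in the frequency variable and to reduce the whole identity to the convolution theorem for the Fourier transform. The starting point is the observation that, for a fixed $x \in \R$, the short-time Fourier transform in the frequency variable is nothing but an inverse Fourier transform of a localized function. Writing $F_x(s) := f(s)\,\overline{\varphi_1(s-x)}$ and $G_x(s) := g(s)\,\overline{\varphi_2(s-x)}$, one reads off directly from Definition \ref{STFT} that
\[
V_{\varphi_1} f(x,\xi) = \cfi[F_x](\xi)\, , \qquad V_{\varphi_2} g(x,\xi) = \cfi[G_x](\xi)\, ,
\]
and, since $\overline{(\varphi_1\varphi_2)(s-x)} = \overline{\varphi_1(s-x)}\,\overline{\varphi_2(s-x)}$ while $(fg)(s) = f(s)\,g(s)$, also
\[
V_{\varphi_1\varphi_2}(fg)(x,\xi) = \cfi[F_x\, G_x](\xi)\, .
\]

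With this reformulation the claim becomes the assertion $\cfi[F_x G_x] = (2\pi)^{-n/2}\, \cfi[F_x] * \cfi[G_x]$, the convolution being taken in the variable $\xi$; spelling out the convolution integral then reproduces exactly the right-hand side of \eqref{ws-1}. Thus the analytic content of the lemma is the elementary fact that the inverse Fourier transform turns a pointwise product into a suitably normalized convolution. I would invoke this in the $L_2$ setting: if $F_x, G_x \in L_2$, then $\cfi[F_x], \cfi[G_x] \in L_2$, the pointwise product $F_x G_x$ lies in $L_1$ by Cauchy--Schwarz, so $\cfi[F_x G_x]$ is a well-defined continuous function, and the convolution $\cfi[F_x] * \cfi[G_x]$ is likewise well defined pointwise; the equality between the two is standard and follows, for instance, by approximating $F_x, G_x$ in $L_2$ by Schwartz functions, for which Fubini's theorem applies without difficulty.

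The one point that genuinely uses the hypotheses is therefore the membership $F_x, G_x \in L_2$ for every fixed $x$, and this is where the growth condition enters. I would exploit the rapid decay of the Schwartz window, namely that for each $N \in \N$ there is a constant $C_N$ with $|\varphi_1(s-x)| \le C_N\,(1+|s-x|)^{-N}$, and then split the integral over the cubes $Q_k$:
\[
\int_{\R} |F_x(s)|^2\, ds = \sum_{k \in \Z} \int_{Q_k} |f(s)|^2\, |\varphi_1(s-x)|^2\, ds \lesssim \sum_{k \in \Z} (1+|k-x|)^{-2N}\,(1+|k|)^{M}\, ,
\]
which converges once $N$ is chosen larger than $(M+n)/2$; the same estimate applies verbatim to $G_x$. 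This is the main (though essentially routine) obstacle: one must check that the local-$L_2$ growth of $f$ and $g$ is dominated by the window decay, so that all the objects above --- in particular $F_x G_x \in L_1$ and the frequency convolution on the right-hand side of \eqref{ws-1} --- are finite and the formal manipulations are legitimate. Once this integrability is secured, the convolution theorem closes the argument simultaneously for all $x,\xi \in \R$.
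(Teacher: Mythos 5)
Your proposal is correct and is in substance the same argument as the paper's: the paper applies Plancherel's identity directly to the pair $f(t)\,\overline{\varphi_1(t-x)}\,e^{-i\xi t}$ and $\overline{g(t)}\,\varphi_2(t-x)$, which is precisely the verification of the $L_2$ convolution theorem $\cfi[F_x G_x]=(2\pi)^{-n/2}\,\cfi[F_x]*\cfi[G_x]$ that you invoke. The only difference is that you additionally spell out the (routine) check that the hypothesis on $\int_{Q_k}|f|^2+|g|^2$ together with the rapid decay of the Schwartz windows forces $F_x,G_x\in L_2$ for each fixed $x$ --- a point the paper merely asserts with ``observe that''.
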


\begin{proof}
The main tool will be the Plancherel identity. 
Observe, that for any fixed $x \in  \R$ the functions $f(t)\,\overline{\varphi_1 (t-x)}$, $\overline{g(t)}\,{\varphi_2 (t-x)}$ belong to $L_2 $ and
therefore their Fourier transforms as well. For brevity we put
\[I :=
\int V_{\varphi_1} (f) (x,\xi-\eta) \, V_{\varphi_2} (g) (x,\eta)\, d\eta \, . 
\]
Applying the Plancherel identity  we conclude
\beqq
I&=&   \int   \cf (  f(t)\,\overline{\varphi_1 (t-x)} \, e^{-i\xi  t})(-\eta)\,  
\overline{\cf (\overline{g(t)}\,{\varphi_2 (t-x)})(-\eta)} \, d\eta
\\
&=&   \, \int    f(t)\,\overline{\varphi_1 (t-x)} \, e^{-i\xi t} \,  
\overline{\overline{g(t)}\,{\varphi_2 (t-x)})} \, dt
\\
& = & \int f(t)\,g(t)\,  \overline{\varphi_1 (t-x) \, \varphi_2 (t-x)}\, e^{-i\xi  t}\, dt 
\\
& = &  (2\pi)^{n/2}\, V_{\varphi_1 \cdot \varphi_2} (fg) (x,\xi)
\, .
\eeqq
The proof is complete.
\end{proof}

\begin{rem}
\rm
 It is clear that the assertion does not extend very much.
 E.g., if  $f,g \in L_p^{\ell oc} $ for some $p< 2$ then the above claim is not true.
 We may take 
 \[
 f(x) = g(x)= \psi (x) \, |x|^{-n/2}\, , \qquad x \in \R\, ,
 \]
 where $\psi$ is a smooth and compactly supported cut-off function s.t. $\psi (0)=1$.
 Then $f \, \cdot \, g $ is not longer a distribution, i.e., the integral 
 \[
  V_{\varphi_1 \,\cdot \,  \varphi_2} (fg)(x,\xi) = (2\pi)^{-\frac{n}{2}} \int_{\R} f(s)\, g(s)\,  \overline{\varphi_1(s-x) \, \varphi_2 (s-x)}\,  e^{-i s\cdot \xi} \, ds 
 \]
 does not make sense in general.
\end{rem}

In \cite{toftCont} and \cite{Sugi} the identity \eqref{ws-1} is applied either in case $f,g \in \cs$
or $f,g \in L_\infty$. Here we shall apply it in the wider context of Lemma \ref{lemma1}.

\begin{lem}\label{mult1}
Let $1\le p,q\le \infty$ and assume $M^s_{p,q} \hookrightarrow M^0_{\infty,1}$. 
Then there exists a constant $c$ such that
\[
\| \, f \, \cdot \, g \, \|_{M^s_{p,q}} \le c\,  \Big(\| \, f \|_{M^0_{\infty,1}} \, \| \, g \|_{M^s_{p,q}}  +  \| \, f \|_{M^s_{p,q}} \, \| \, g \|_{M^0_{\infty,1}} \Big)
\]
holds for all $f,g \in M^s_{p,q}$.
\end{lem}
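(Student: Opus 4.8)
The plan is to work with the short-time Fourier transform description of the norm from Definition \ref{modCont} and to exploit Toft's identity \eqref{ws-1}, which turns the STFT of a product into a convolution (in the frequency variable) of the two individual short-time Fourier transforms. First I would fix nontrivial windows $\varphi_1,\varphi_2\in\cs$ whose product $\phi:=\varphi_1\varphi_2$ is again nontrivial (two Gaussians will do), and compute $\|fg\|_{M^s_{p,q}}$ with the window $\phi$, the $f$-norms with $\varphi_1$ and the $g$-norms with $\varphi_2$; this is legitimate because the norms are window-independent up to equivalence, Lemma \ref{basic}(ii). Before applying \eqref{ws-1} I must check its hypotheses: by Corollary \ref{einbettung2} the standing assumption $M^s_{p,q}\hookrightarrow M^0_{\infty,1}$ forces $M^s_{p,q}\hookrightarrow L_\infty$, so $f,g\in L_\infty$, whence the local $L_2$-bound required in Lemma \ref{lemma1} holds trivially (with $M=0$) and $fg\in L_\infty\subset\cs'$ is a well-defined tempered distribution. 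Corollary \ref{einbettung2} also yields $s\ge 0$, which is needed below.

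Next I would insert the weight. From the elementary inequality $\langle\xi\rangle\le\langle\xi-\eta\rangle+\langle\eta\rangle$ together with $s\ge 0$ one obtains $\langle\xi\rangle^s\lesssim\langle\xi-\eta\rangle^s+\langle\eta\rangle^s$. Multiplying the modulus of \eqref{ws-1} by $\langle\xi\rangle^s$ and distributing this bound under the integral splits the estimate into two convolution terms,
\[
\langle\xi\rangle^s\,|V_\phi(fg)(x,\xi)|\;\lesssim\;\big(F_s(x,\cdot)*G_0(x,\cdot)\big)(\xi)+\big(F_0(x,\cdot)*G_s(x,\cdot)\big)(\xi),
\]
where $F_\sigma(x,\zeta):=\langle\zeta\rangle^\sigma|V_{\varphi_1}f(x,\zeta)|$ and $G_\sigma(x,\zeta):=\langle\zeta\rangle^\sigma|V_{\varphi_2}g(x,\zeta)|$ for $\sigma\in\{0,s\}$, the convolutions being taken in the frequency variable for each fixed $x$.

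It remains to take the mixed norm $L^q_\xi(L^p_x)$ of each term. For the first one, Minkowski's integral inequality (valid since $p\ge 1$) lets me move the inner $L^p_x$-norm inside the $\eta$-integral, and then I apply H\"older in $x$ \emph{asymmetrically}, estimating the $f$-factor in $L^p_x$ and the $g$-factor in $L^\infty_x$:
\[
\Big\|\int F_s(\cdot,\xi-\eta)\,G_0(\cdot,\eta)\,d\eta\Big\|_{L^p_x}\le\int \|F_s(\cdot,\xi-\eta)\|_{L^p_x}\,\|G_0(\cdot,\eta)\|_{L^\infty_x}\,d\eta=(a*b)(\xi),
\]
with $a(\zeta):=\|F_s(\cdot,\zeta)\|_{L^p_x}$ and $b(\zeta):=\|G_0(\cdot,\zeta)\|_{L^\infty_x}$. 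Taking $L^q_\xi$ and using Young's inequality $\|a*b\|_{L^q}\le\|a\|_{L^q}\|b\|_{L^1}$, I recognize $\|a\|_{L^q}=\|f\|_{M^s_{p,q}}$ and $\|b\|_{L^1}=\|g\|_{M^0_{\infty,1}}$ directly from Definition \ref{modCont}, which gives the bound $\|f\|_{M^s_{p,q}}\|g\|_{M^0_{\infty,1}}$. The second term is handled by the mirror-image splitting, now placing the $f$-factor in $L^\infty_x$ and the $g$-factor in $L^p_x$, producing $\|f\|_{M^0_{\infty,1}}\|g\|_{M^s_{p,q}}$. Adding the two estimates completes the proof.

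The computation is largely bookkeeping; the one genuinely decisive choice — and what I expect to be the main point to get right — is the asymmetric use of H\"older's inequality that pairs $L^\infty_x$ (feeding the $M^0_{\infty,1}$-norm through its $L^1$-summation in frequency) with $L^p_x$ (feeding the $M^s_{p,q}$-norm), so that each of the two convolution terms produces exactly one factor of each norm. This is coupled with the fact that Young's inequality closes at the exponent $q$ precisely because one convolution factor always lands in $L^1_\xi$.
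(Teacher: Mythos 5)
Your proposal is correct and follows essentially the same route as the paper: both verify the hypotheses of Lemma \ref{lemma1} via $M^s_{p,q}\hookrightarrow L_\infty$, apply Toft's convolution identity \eqref{ws-1} with the product window, distribute the weight using $s\ge 0$ (your pointwise bound $\langle\xi\rangle^s\lesssim\langle\xi-\eta\rangle^s+\langle\eta\rangle^s$ is just a repackaging of the paper's splitting of the $\eta$-integration into the regions $|\xi-\eta|\ge|\eta|$ and $|\xi-\eta|<|\eta|$), and then close with Minkowski, the asymmetric $L^p_x$--$L^\infty_x$ H\"older pairing, and Young's inequality $L^q\ast L^1\to L^q$ in the frequency variable. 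No gaps.
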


\begin{proof}
The main idea in the proof consists in the fact that the modulation space can be characterized by different window functions.  
Since $M^0_{\infty,1} \hookrightarrow L_\infty$ we know that $f,g$ satisfy the conditions in Lemma \ref{lemma1}. Hence
\beq\label{ws-14B}
\|\, f\, \cdot \, g\, \|_{M^s_{p,q}} & = &  \Big\{ \int \Big[ \int \Big|  V_{\varphi^2} (f\, \cdot \, g)(x,\xi)\, 
\Big|^p dx\Big]^{q/p} \langle \xi \rangle^{sq} d\xi \Big\}^{1/q} \\
&\leq& \Big\{ \int \Big[ \int \Big|  \int V_{\varphi} f(x,\xi-\eta) V_\varphi g (x,\eta)  d\eta\, 
\Big|^p dx\Big]^{q/p} \langle \xi \rangle^{sq} d\xi \Big\}^{1/q}.
\nonumber
\eeq
We split the integration with respect to $\eta$ into two parts
\be\label{ws-15B}
\Omega_\xi := \{ \eta: \: |\xi - \eta|\ge |\eta|\} \qquad \mbox{and}\qquad \Gamma_\xi := \{\eta: \: |\xi - \eta|< |\eta|\}\, , \quad \xi \in \R\, .  
\ee
It follows
\[
 \|\, f\, \cdot \, g\, \|_{M^s_{p,q}} \le 2^s \, (A+ B)\, , 
\]
where 
\[
 A:= \Big\{ \int \Big[ \int \Big|  \int_{\Omega_\xi} V_{\varphi} f(x,\xi-\eta)\, (1+|\xi-\eta|^2)^{s/2} V_\varphi g (x,\eta)\,  d\eta\, \Big|^p dx\Big]^{q/p} \, d\xi \Big\}^{1/q}
\]
and 
\[
 B:= \Big\{ \int \Big[ \int \Big|  \int_{\Gamma_\xi} V_{\varphi} f(x,\xi-\eta)\, V_\varphi g (x,\eta)\, (1+|\eta|^2)^{s/2} d\eta\, \Big|^p dx\Big]^{q/p} \, d\xi \Big\}^{1/q}\, .
\]
We continue by applying the generalized Minkowski inequality, see \cite[Thm.~2.4]{LL}. This yields
\beqq
 A &\le & \int \Big\{ \int \Big[ \int | V_{\varphi} f(x,\xi-\eta)\, \langle \xi-\eta \rangle^{s} \, V_\varphi g (x,\eta)\,|^p \,  dx\Big]^{q/p} \, d\xi \Big\}^{1/q}\, d\eta
 \\
 & \le & \int \Big(\sup_{x \in \R} \, | V_\varphi g (x,\eta)|\Big)\, \Big\{ \int 
 \Big[ \int | V_{\varphi} f(x,\xi-\eta)\, \langle \xi-\eta \rangle^{s} \,|^p \,  dx\Big]^{q/p} \, d\xi \Big\}^{1/q}\, d\eta
 \\
 &=&  \|\, g\, \|_{M^0_{\infty,1}}  \, \|\, f\, \|_{M^s_{p,q}} \, .
\eeqq
Analogously one can prove 
\[
 B \le  \|\, f\, \|_{M^0_{\infty,1}}  \, \|\, g\,  \|_{M^s_{p,q}} \, .
\]
The proof is complete.
\end{proof}

\begin{rem}
 \rm
 (i)
We proved a bit more than stated.
In fact, we have shown
\[
\| \, f \, \cdot \, g \, \|_{M^s_{p,q}} \le 2^{s}\,  \Big(\| \, f \|_{M^0_{\infty,1}} \, \| \, g \|_{M^s_{p,q}}  +  \| \, f \|_{M^s_{p,q}} \, \| \, g \|_{M^0_{\infty,1}} \Big)
\]
But here one has to notice that the norm on the left-hand side is generated by the window $\varphi^2$, whereas the norms on the right-hand side are generated by the window $\varphi$.
\\
(ii) Lemma \ref{mult1} has been proved by  Sugimoto, Tomita and Wang \cite{Sugi}. For partial results with a different proof we refer to Feichtinger \cite{feichtinger}.
\end{rem}

Next we turn to necessary and sufficient conditions for the algebra property.

\begin{thm}\label{mult2}
Let $1\le p,q\le \infty$ and $s \in \re $. 
Then $M^s_{p.q}$ is an algebra with respect to pointwise multiplication if and only if 
either $s \ge 0 $ and $q=1$ or $s>n/q'$.
\end{thm}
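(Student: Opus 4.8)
The parameter condition is precisely the one appearing in Corollary \ref{einbettung2}, which characterizes the embedding $M^s_{p,q}\hookrightarrow L_\infty$ (equivalently $M^s_{p,q}\hookrightarrow M^0_{\infty,1}$). So the natural reformulation is: $M^s_{p,q}$ is a pointwise multiplication algebra if and only if $M^s_{p,q}\hookrightarrow L_\infty$. I would prove the two implications separately, sufficiency being short and necessity being the substantial part.

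\emph{Sufficiency.} Assume the condition holds. By Corollary \ref{einbettung2} we have $M^s_{p,q}\hookrightarrow M^0_{\infty,1}$, so in particular $\|h\|_{M^0_{\infty,1}}\le C\|h\|_{M^s_{p,q}}$ for all $h$. Since $M^0_{\infty,1}\hookrightarrow L_\infty\subset L_2^{\ell oc}$, any $f,g\in M^s_{p,q}$ meet the hypotheses of Lemma \ref{lemma1}, so the product $fg$ is well defined and \eqref{ws-1} holds. Feeding this into Lemma \ref{mult1} gives $\|fg\|_{M^s_{p,q}}\le c(\|f\|_{M^0_{\infty,1}}\|g\|_{M^s_{p,q}}+\|f\|_{M^s_{p,q}}\|g\|_{M^0_{\infty,1}})$, and absorbing the $M^0_{\infty,1}$-norms by the displayed embedding yields $\|fg\|_{M^s_{p,q}}\le 2cC\|f\|_{M^s_{p,q}}\|g\|_{M^s_{p,q}}$. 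Hence $M^s_{p,q}$ is an algebra.

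\emph{Necessity.} Assume $M^s_{p,q}$ is an algebra; by the closed graph theorem the multiplication is bounded, $\|fg\|_{M^s_{p,q}}\le C\|f\|_{M^s_{p,q}}\|g\|_{M^s_{p,q}}$. The plan is to test this on the modulated bumps already used in Corollary \ref{einbettung2}: for a sequence $a=(a_k)$ supported on the coarse lattice $2\zz^n$ put $f_a(x):=\psi(x)\sum_k a_k e^{ik\cdot x}$, where $\psi\in\cs$ is real with $\supp\cf\psi\subset\{|\xi_i|<\varepsilon\}$, $\varepsilon<1/2$. Because the cubes $k+[-\varepsilon,\varepsilon]^n$ are pairwise separated for $k\in 2\zz^n$, one has $\Box_k f_a=a_k e^{ik\cdot x}\psi$, whence $\|f_a\|_{M^s_{p,q}}\asymp\|\psi\|_{L_p}\,\|(\langle k\rangle^s a_k)\|_{\ell_q}$. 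Moreover $f_af_b=\psi^2\sum_m(a*b)_m e^{im\cdot x}$ with $a*b$ again supported on $2\zz^n$, so likewise $\|f_af_b\|_{M^s_{p,q}}\asymp\|\psi^2\|_{L_p}\,\|(\langle m\rangle^s(a*b)_m)\|_{\ell_q}$. The multiplication bound therefore forces the weighted convolution inequality $\|a*b\|_{\ell_q^s}\lesssim\|a\|_{\ell_q^s}\|b\|_{\ell_q^s}$, i.e. the weighted sequence space $\ell_q^s(\zz^n)$ (weight $\langle\cdot\rangle^s$) must be a convolution algebra. It then remains to show this fails whenever the parameter condition fails. If $s<0$, testing with unit vectors $a=b=e_{k_0}$ gives $\langle 2k_0\rangle^s\le C\langle k_0\rangle^{2s}$, whose left/right ratio grows like $|k_0|^{-s}\to\infty$, a contradiction. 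If $1<q\le\infty$ and $0\le s\le n/q'$, I would take $a_k=\langle k\rangle^{-s-n/q}(\log\langle k\rangle)^{-\gamma}$ on $2\zz^n\setminus\{0\}$; then $a\in\ell_q^s$ as soon as $\gamma q>1$, while a scaling estimate for $(a*a)_m$ shows $\sum_m\langle m\rangle^{sq}|(a*a)_m|^q=\infty$ for $s\le n/q'$, so $a*a\notin\ell_q^s$. This exhausts the complement of the admissible range and completes the necessity.

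The main obstacle is the necessity, and within it the critical line $s=n/q'$ (for $q>1$): there the power counting in the self-convolution is exactly balanced, and one must track the logarithmic factors precisely, mirroring Substep 2.3 of the proof of Corollary \ref{einbettung2}. A secondary technical point is the reduction itself --- justifying $\|f_a\|_{M^s_{p,q}}\asymp\|a\|_{\ell_q^s}$ and the product--convolution correspondence --- for which the coarse-lattice support of $a$ is essential, as it prevents the Fourier supports of the summands (and of $\psi^2$) from spilling into neighbouring cubes $Q_k$.
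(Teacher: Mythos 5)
Your sufficiency argument is the paper's own (Lemma \ref{lemma1}, Lemma \ref{mult1}, and the embedding from Corollary \ref{einbettung2}), but your necessity follows a genuinely different route. The paper splits the necessity into two regimes: for $1\le p,q<\infty$ it invokes Lemma \ref{notw}, which upgrades boundedness of $T_f$ on $M^s_{p,q}$ to boundedness on $L_2$ via duality and complex interpolation, concluding $f\in L_\infty$, hence $M^s_{p,q}\subset L_\infty$ and then the parameter conditions via Corollary \ref{einbettung2}; only for $\max(p,q)=\infty$ does it use the modulated bumps $f_N=\psi\sum_k a_k e^{ik\cdot x}$. You run the counterexample machine uniformly in all cases, reducing the algebra property to the convolution inequality $\|a*b\|_{\ell_q^s}\lesssim\|a\|_{\ell_q^s}\|b\|_{\ell_q^s}$ on a coarse lattice and refuting it on the whole complement of the admissible range. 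This is legitimate: the $L_p$-norms of $\psi$ and $\psi^2$ merely factor out, so nothing in the reduction requires $p<\infty$ or $q<\infty$, and it buys elementarity --- no duality, no interpolation, no closure of $\cs$. What it loses is the reusable operator-theoretic statement of Lemma \ref{notw} (``$T_f$ bounded implies $f\in L_\infty$''), which the paper exploits again in part (ii) of the lemma following Theorem \ref{algebra12}. Your unit-vector test for $s<0$ is in fact simpler than the paper's Substep 3.3.

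The one step you must still carry out is the critical line $s=n/q'$, $1<q<\infty$, and there your stated condition $\gamma q>1$ is not by itself enough. At $s=n/q'$ one has $s+n/q=n$, so $a_k=\langle k\rangle^{-n}(\log\langle k\rangle)^{-\gamma}$, and the inner sum $\sum_{|k|\le|m|/2}a_k$ behaves like $(\log\langle m\rangle)^{1-\gamma}$ (for $\gamma<1$), giving $(a*a)_m\gtrsim\langle m\rangle^{-n}(\log\langle m\rangle)^{1-2\gamma}$ and hence $\sum_m\langle m\rangle^{sq}|(a*a)_m|^q\gtrsim\sum_m\langle m\rangle^{-n}(\log\langle m\rangle)^{(1-2\gamma)q}$. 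This diverges only if $\gamma\le\tfrac12+\tfrac1{2q}$, so you need $\gamma$ in the window $\tfrac1q<\gamma\le\tfrac12+\tfrac1{2q}$, which is nonempty precisely because $q>1$. With that choice made explicit the argument closes; as you note, the remaining support and separation issues on the lattice $2\Z$ are routine and are handled at the same level of rigor as in Step 2 of the proof of Corollary \ref{einbettung2}.
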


\begin{rem}\label{mult3}
 \rm
(i) By Corollary \ref{einbettung2} the Theorem \ref{mult2} can be reformulated as 
 \[
 M^s_{p.q}\quad  \mbox{is an algebra} \qquad \Longleftrightarrow \qquad  
 M^s_{p.q} \hookrightarrow L_\infty\, . 
 \]
This is in some sense natural because otherwise one could increase local singularities by pointwise multiplication.
\\
(ii)
Theorem \ref{mult2} has a partial counterpart for Besov spaces.
Here one knows that $B^s_{p,q}$ is an algebra if and only if $B^s_{p,q} \hookrightarrow L_\infty$ and $s>0$.
We refer to Peetre \cite[Thm.~11, page 147]{peetre}, Triebel \cite[Thm.~2.8.3]{triebel} (sufficiency) and to \cite[Thm.~4.6.4/1]{RS} (necessity).
\end{rem}

To prepare the proof we need the following lemma which is of interest for its own.

\begin{lem}\label{notw}
Let  $1\le p,q < \infty$ and $s \in \re $.
Let $f \in \cs'$ and let  there exists a constant $c>0$ such that
\[
\|f \, \cdot \, g\|_{M^s_{p,q}} \le c \, \|g\|_{M^s_{p,q}}
\]
holds for all $g \in \cs$. Then $f\in L_\infty$ follows.
\end{lem}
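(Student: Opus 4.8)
The plan is to reduce the assertion to the classical fact that a pointwise multiplier of $L_2$ is automatically an $L_\infty$-function, using \emph{duality} and \emph{complex interpolation}. The restriction $1\le p,q<\infty$ is exactly what makes this route available: it guarantees that $\cs$ is dense in $M^s_{p,q}$ and that the dual space is again a modulation space, namely $(M^s_{p,q})'=M^{-s}_{p',q'}$, see \cite[Chapt.~11]{groechenig}. The idea is that if multiplication by $f$ is bounded on $M^s_{p,q}$, then by transposition it is also bounded on $M^{-s}_{p',q'}$, and interpolating these two endpoints at $\theta=\tfrac12$ lands precisely on $M^0_{2,2}=L_2$, recall Remark \ref{hs=ms}(ii).

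First I would extend the multiplication operator to a bounded operator. Since $\cs$ is dense in $M^s_{p,q}$, the estimate $\|fg\|_{M^s_{p,q}}\le c\,\|g\|_{M^s_{p,q}}$ for $g\in\cs$ shows that $M_f:~g\mapsto fg$ extends uniquely to an operator $M_f\in\cl(M^s_{p,q},M^s_{p,q})$. Working with the bilinear pairing $\langle u,v\rangle=\int u\,v\,dx$ (rather than the sesquilinear one, so as to avoid a spurious complex conjugate), one has for all $g,h\in\cs$
\[
\langle fg,h\rangle=\int f\,g\,h\,dx=\langle g,fh\rangle ,
\]
so the transpose of $M_f$ is again multiplication by $f$. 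Consequently $M_f$ is bounded on $(M^s_{p,q})'=M^{-s}_{p',q'}$ as well, and the two bounded operators agree with $g\mapsto fg$ on $\cs$. Complex interpolation of modulation spaces then gives
\[
\big[\,M^s_{p,q},\,M^{-s}_{p',q'}\,\big]_{1/2}=M^0_{2,2}=L_2 ,
\]
because $\tfrac12 s+\tfrac12(-s)=0$ and $\tfrac12(\tfrac1p+\tfrac1{p'})=\tfrac12(\tfrac1q+\tfrac1{q'})=\tfrac12$; since $\cs$ is dense in $L_2$, the common extension is bounded on $L_2$, i.e. $M_f\in\cl(L_2,L_2)$.

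It then remains to prove the classical statement that an $L_2$-multiplier belongs to $L_\infty$. Testing the bound $\|fg\|_{L_2}\le c\,\|g\|_{L_2}$ with bumps $g\in C_0^\infty$ first shows that $f$ is a regular function lying in $L_2^{\ell oc}$, so its level sets are meaningful. If $f\notin L_\infty$, then for every $\lambda>0$ the set $\{x:~|f(x)|>\lambda\}$ has positive measure; intersecting it with a large ball produces a set $E$ of finite positive measure, and approximating $|E|^{-1/2}\chi_E$ by functions $g\in\cs$ in $L_2$ forces $\|fg\|_{L_2}\ge\lambda/2$ while $\|g\|_{L_2}\approx 1$. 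This contradicts the multiplier bound once $\lambda>2c$, so $\|f\|_{L_\infty}\le 2c$ and $f\in L_\infty$ as claimed.

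The only delicate points are technical. One must be sure that the transpose is identified with the \emph{same} operator $M_f$ on both endpoints (handled by the bilinear pairing above), and that the complex interpolation identity for modulation spaces remains valid when an endpoint index equals $\infty$, that is, in the cases $p=1$ or $q=1$ where $p'$ or $q'$ becomes $\infty$; the relevant identities for duality and interpolation can be found in \cite{feichtinger} and \cite[Chapt.~11]{groechenig}. I expect the verification of the interpolation identity at these $\infty$-endpoints to be the main obstacle, the remainder being a routine combination of density, duality, and the elementary $L_2$ argument.
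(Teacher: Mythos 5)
Your proposal is correct and follows essentially the same route as the paper: extend $T_f$ by density, pass to the adjoint/transpose to get boundedness on $M^{-s}_{p',q'}$, interpolate the two endpoints at $\theta=\tfrac12$ to land on $M^0_{2,2}=L_2$, and invoke the classical fact that an $L_2$-multiplier is an $L_\infty$-function. The $\infty$-endpoint worry you raise is already covered by the interpolation result the paper cites, which permits $p_2,q_2=\infty$ in the second slot provided the first endpoint has finite indices, as guaranteed by the hypothesis $p,q<\infty$.
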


\begin{proof}
 Let $T_f (g):= f \, \cdot \, g$, $g\in \cs$.
Let $\mathring{M}^s_{p,q}$ denote the closure of $\cs$  in $M^s_{p,q}$.
Hence, there is a unique extension of $T_f$ to a continuous operator belonging to 
$\cl (\mathring{M}^s_{p,q}, {M}^s_{p,q})$.
Next we employ duality. We fix $p,q$ and $s$ ($1\le p,q\le \infty$, $s\in \re$).
Let $(g,h)$ denote the standard dual pairing on $\cs' \times \cs$.
Then 
\[
 \| g \| := \sup \Big\{ |( g,h)|~:  \quad h \in \cs, \: \|h\|_{M^{-s}_{p',q'}} \le 1 \Big\}
\]
is an equivalent norm on $M^s_{p,q}$, see Feichtinger \cite{feichtinger} or Toft \cite{toftCont}.
In view of this equivalent norm our assumption on $T_f$ implies 
$\cl ({M}^{-s}_{p',q'}, {M}^{-s}_{p',q'})$.
Next we continue by complex interpolation. Let $0< \Theta < 1$.
It is known that 
\[
{M}^s_{p,q} = [{M}^{s_1}_{p_1,q_1}, {M}^{s_2}_{p_2,q_2}]_\Theta
\] 
if $1 \le p_1,q_1<\infty$, $1 \le p_2, q_2\le \infty$, $s_1,s_2 \in \re$ and  
\[
s:= (1-\Theta)s_1 + \Theta \, s_2\, , \quad \frac{1}{p} := \frac{1-\Theta}{p_1} + \frac{\Theta}{p_2}\, , \quad 
 \frac{1}{q} := \frac{1-\Theta}{q_1} + \frac{\Theta}{q_2}\, , 
\]
see Feichtinger \cite{feichtinger}.
Thanks to the interpolation property of the complex method 
we conclude
\[
 T_f \in \cl \Big( [\mathring{M}^{s}_{p,q}, {M}^{-s}_{p',q'}]_{1/2}, [\mathring{M}^{s}_{p,q}, {M}^{-s}_{p',q'}]_{1/2} \Big)\, .
\]
Because of $\mathring{M}^{s}_{p,q} = {M}^{s}_{p,q}$ if $\max(p,q)<\infty$ we find 
\[
T_f \in \cl \Big( {M}^{0}_{2,2}, {M}^{0}_{2,2}\Big) = \cl \Big( L_{2}, L_{2}\Big) \, .
\]
But this implies $f\in L_\infty$.
\end{proof}

\begin{proof} of Theorem \ref{mult2}. \\
{\em  Step 1.}
Sufficiency is covered by Lemma \ref{mult1}.
\\
{\em Step 2.} Necessity in case  $1\le p,q<\infty $ and $s\in \re$. 
In view of Lemma \ref{notw} the embedding 
$M^s_{p,q}\, \cdot \, M^s_{p,q}\hookrightarrow M^s_{p,q}$ implies 
$M^s_{p,q} \subset L_\infty$. 
\\
{\em Step 3.} To treat the remaining cases $\max (p,q) = \infty$ we argue by using explicit 
counterexamples.
\\
{\em Substep 3.1.} Let $1 \le p \le \infty$,  $s=0$ and $1 < q \le \infty$.
We assume that $M^0_{p ,q}$ is an algebra.
This implies the existence of a constant $c>0$ such that 
\be\label{ws-11}
\|\, f\, \cdot \, g \, \|_{M^0_{p,q}} \le c \, \|\, f\, \|_{M^0_{p,q}}\, \|\, g \, \|_{M^0_{p,q}}
\ee
holds for all $f,g \in M^0_{p,q}$.
Let
\[
 f (x) = \psi (x)\, \sum_{k=1}^\infty  a_k \, e^{ikx_1}\, , \qquad x=(x_1, \ldots \, , x_n) \in \R\, , 
\]
be as in \eqref{ws-10b}.
Then, as shown above, 
\[
 \|\, f\, \|_{M^0_{p,q}} = \| \psi \|_{L_p}\, \| (a_k)_k\|_{\ell_q}
\]
follows. Let 
\[
 f_N (x) := \psi (x)\, \sum_{k=1}^N a_k\,  e^{ikx_1}\, , \qquad x=(x_1, \ldots \, , x_n) \in \R\, , \quad  N \in \N\, .
\]
Obviously $f_N \in \cs$. We assume that  
\[
\supp \cf \psi \subset \{\xi: \max_{j=1, \ldots , n}\, |\xi_j|< \varepsilon\}\,  \qquad \mbox{with}\quad  \varepsilon < 1/4.
\]
Then, because of
\[
f_N (x)\, \cdot \, f_N (x) = \psi^2 (x) \, \sum_{m=2}^{2N} \Big(\sum_{k = 1}^{m-1} 
a_k  \, a_{m-k}  \Big)\, e^{imx}\, , 
\]
we conclude
\[
 \| f_N \, \cdot \, f_N \|_{M^0_{p,q}} = \|\psi^2\|_{L_p} \, \Bigg(
\sum_{m=2}^{2N} \Big|\sum_{k=1}^{m-1} a_k \, a_{m-k} \Big|^q\Bigg)^{1/q}\, .
\]
Inequality \eqref{ws-11} implies
\beqq
\Bigg(
\sum_{m=2}^{2N} \Big|\sum_{k=1}^{m-1} a_k \, a_{m-k} \Big|^q\Bigg)^{1/q}
\le c \, \frac{\|\psi\|^2_{L_p}}{\|\psi^2\|_{L_p}} \, \Big(\sum_{k=1}^{N} |a_k|^q\Big)^{2/q}\, .
\eeqq
Clearly, in case $q>1$ this is impossible in this generality.
Explicit counterexamples are given by
\[
a_k := k^{-1/q} \qquad \mbox{if}\quad 1 < q < \infty
\]
and 
\[
a_k =1 \qquad \mbox{if}\quad  q =\infty\, .
\]
In case $1<q< \infty$ \eqref{ws-11} yields
\[
 \| f_N \, \cdot \, f_N \|_{M^0_{p,q}} \asymp N^{1-1/q}\qquad \mbox{and}\qquad \| \, f_N \,  \|_{M^0_{p,q}}^2 \asymp (\log N)^{2/q}\, .
\]
For $q= \infty$ we obtain 
\[
 \| f_N \, \cdot \, f_N \|_{M^0_{p,\infty}} \asymp N \qquad \mbox{and}\qquad \| \, f_N \,  \|_{M^0_{p,\infty}}^2 \asymp 1\, .
\]
For $N \to \infty$ we find a contradiction in both situations.   
\\
{\em Substep 3.2.} Let $1 \le p \leq \infty$, $q=\infty$ and $0 < s \le n$.
We argue as in Substep 3.1 and assume $M^s_{p,\infty}$ is an algebra with respect to pointwise multiplication.
This leads to the existence of a constant $c>0$ such that 
\[ \|\, f\, \cdot \, g \, \|_{M^s_{p,\infty}} \le c \, \|\, f\, \|_{M^s_{p,\infty}}\, \|\, g \, \|_{M^s_{p,\infty}} \]
holds for all $f,g \in M^s_{p,\infty}$.
We choose 
\[ f(x)=g(x)=f_N(x) := \psi (x)\, \sum_{\|k\|_\infty \le N} a_k\,  e^{ikx}\,\, , \qquad x \in \R\, , \]
 and obtain
\beqq 
 \|\, f_N\, \|_{M^s_{p,q}} & = & \| \psi \|_{L_p}\, \Big( \sum_{\|k\|_\infty \le N} |a_k\, \langle k\rangle^s | ^q \Big)^{1/q} \, ,
\\
 \| f_N \, \cdot \, f_N \|_{M^s_{p,q}} & = & \|\psi^2\|_{L_p} \, \Bigg(
\sum_{\|m\|_\infty \le 2N} \langle m \rangle^{sq}\,  \Big|\sum_{{k: ~ \|k\|_\infty\le N \atop  \| m -k\|_\infty  \le N}} 
a_k \, a_{m-k} \Big|^q\Bigg)^{1/q} \, .
\eeqq
In case $s<n$ we choose $a_k :=1$ for all $k$ and obtain
\[
 \| f_N \, \cdot \, f_N \|_{M^s_{p,\infty}} \asymp N^{n+s} \qquad \mbox{and}\qquad \| \, f_N \,  \|_{M^s_{p,\infty}}^2 \asymp N^{2s}\, .
\]
This yields a contradiction if $s<n$.
For $s=n$ we consider $a_k := \langle k\rangle^{-n} $ for all $k$. This yields
\[
\log N\ls \, 
 \| f_N \, \cdot \, f_N \|_{M^n_{p,\infty}}  \qquad \mbox{and}\qquad \| \, f_N \,  \|_{M^n_{p,\infty}}^2 \asymp 1\, ,
\]
yielding a contradiction as well.
\\
{\em Substep 3.3.} Let $s<0$ and $1\le p,q\le \infty$.
We choose $a_k := \langle k\rangle^{2|s|} $ for all $k$ and obtain
\[
N^{3|s| + n+n/q}\ls \, 
 \| f_N \, \cdot \, f_N \|_{M^s_{p,q}}  \qquad \mbox{and}\qquad \| \, f_N \,  \|_{M^s_{p,q}}^2 \asymp N^{2|s| + 2n/q}\, .
\]
For $N \to \infty$ this implies $|s|+n \le n/q$. Since $|s|>0$ this is impossible. 
The proof is complete.
\end{proof}

\begin{cor}\label{mult4}
Let $1\le p,q\le \infty$ and $s \ge 0$. 
Then $M^s_{p,q}\cap M^0_{\infty,1}$ is an algebra with respect to pointwise multiplication and there 
exist a constant $c$ such that
\[
\| \, f \, \cdot \, g \, \|_{M^s_{p.q}} \le c\,  \Big(\| \, f \|_{M^0_{\infty,1}} \, \| \, g \|_{M^s_{p,q}}  +  \| \, f \|_{M^s_{p,q}} \, \| \, g \|_{M^0_{\infty,1}} \Big)
\]
holds for all $f,g \in M^s_{p,q}\cap M^0_{\infty,1}$.
\end{cor}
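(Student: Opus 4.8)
The plan is to observe that the norm inequality asserted in the corollary is exactly the one produced in the proof of Lemma \ref{mult1}, and that that proof never genuinely uses the global embedding $M^s_{p,q}\hookrightarrow M^0_{\infty,1}$: it only uses that the two factors lie in $L_\infty$ (so that Lemma \ref{lemma1} and the identity \eqref{ws-1} are available) together with finiteness of the norms $\|f\|_{M^0_{\infty,1}}$, $\|f\|_{M^s_{p,q}}$ and their $g$-counterparts. For $f,g\in M^s_{p,q}\cap M^0_{\infty,1}$ all of this is automatic: by Corollary \ref{einbettung2} we have $M^0_{\infty,1}\hookrightarrow L_\infty$, hence $f,g\in L_\infty$, and both pairs of norms are finite by assumption.

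First I would fix $f,g\in M^s_{p,q}\cap M^0_{\infty,1}$ and invoke $M^0_{\infty,1}\hookrightarrow L_\infty$ to place them in $L_\infty$; this verifies the hypotheses of Lemma \ref{lemma1} (the local $L_2$ mass over $Q_k$ is uniformly bounded, hence of the required polynomial growth with $M=0$), so the convolution identity \eqref{ws-1} holds. Then I would rerun verbatim the estimate of Lemma \ref{mult1}: split the $\eta$-integration into $\Omega_\xi$ and $\Gamma_\xi$ as in \eqref{ws-15B}, use $\langle\xi\rangle^s\le 2^s\langle\xi-\eta\rangle^s$ on $\Omega_\xi$ and $\langle\xi\rangle^s\le 2^s\langle\eta\rangle^s$ on $\Gamma_\xi$ (here the hypothesis $s\ge 0$ is precisely what makes these two elementary inequalities valid), and finally apply the generalized Minkowski inequality. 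This yields
\[
\|fg\|_{M^s_{p,q}}\le 2^s\Big(\|f\|_{M^0_{\infty,1}}\|g\|_{M^s_{p,q}}+\|f\|_{M^s_{p,q}}\|g\|_{M^0_{\infty,1}}\Big),
\]
which is the asserted inequality and in particular shows $fg\in M^s_{p,q}$.

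It remains to see that $fg$ also lies in $M^0_{\infty,1}$, so that the product stays inside the intersection. For this I would apply Lemma \ref{mult1} with the particular parameters $(s,p,q)=(0,\infty,1)$, which is legitimate since $M^0_{\infty,1}\hookrightarrow M^0_{\infty,1}$ holds trivially; it gives $\|fg\|_{M^0_{\infty,1}}\le 2\,\|f\|_{M^0_{\infty,1}}\|g\|_{M^0_{\infty,1}}$, i.e. $M^0_{\infty,1}$ is itself a multiplication algebra (the case $s=0$, $q=1$ of Theorem \ref{mult2}). Hence $fg\in M^s_{p,q}\cap M^0_{\infty,1}$. Equipping the intersection with $\|h\|:=\|h\|_{M^s_{p,q}}+\|h\|_{M^0_{\infty,1}}$, the two displayed estimates combine to $\|fg\|\ls\|f\|\,\|g\|$, so $M^s_{p,q}\cap M^0_{\infty,1}$ is a Banach algebra.

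The only genuinely delicate point, and the one I would check carefully, is the claim that the proof of Lemma \ref{mult1} uses its embedding hypothesis solely through $L_\infty$-membership and norm finiteness; one must make sure no hidden use of $M^s_{p,q}\hookrightarrow M^0_{\infty,1}$ (for instance to guarantee $\|fg\|_{M^s_{p,q}}<\infty$ a priori) slips in. Here it does not: the chain of inequalities itself establishes finiteness, and the restriction $s\ge 0$ supplies the dyadic weight estimates, so the argument transfers without change.
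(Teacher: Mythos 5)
Your proposal is correct and follows exactly the paper's own route: the paper's proof of Corollary \ref{mult4} consists of the single remark that the same arguments as in Lemma \ref{mult1} apply, and your careful verification that the embedding hypothesis there is used only to secure $f,g\in L_\infty$ and finiteness of the four norms (all of which the intersection supplies directly) is precisely the justification the authors leave implicit. Your additional check that the product stays in $M^0_{\infty,1}$, so that the intersection is genuinely an algebra, is a worthwhile explicit step that the paper also leaves to the reader.
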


\begin{proof}
 The same arguments as in Lemma \ref{mult1} apply.
\end{proof}

\begin{rem}\label{mult5}
 \rm
Corollary \ref{mult4} has a  counterpart for Besov spaces.
Here one knows that $B^s_{p,q} \cap L_\infty$ is an algebra if $1 \le p,q\le \infty$ and $s>0$.
We refer to Peetre \cite[Thm.~11, page 147]{peetre} and to \cite[Thm.~4.6.4/2]{RS}.
\end{rem}


\subsection{More general products of functions} \label{general}


Here we consider the problem 

\[
 M^{s_1}_{p_1,q_1} \, \cdot \, M^{s_2}_{p_2,q} \hookrightarrow M^{s}_{p,q}\, .
\]
As a first result we mention a generalization of Lemma \ref{mult1}.

\begin{lem}\label{mult6}
Let $1\le p_1,p_2,q \le \infty$ and $s \ge 0$. 
We put $1/p:= (1/p_1) + (1/p_2)$. If $p \in [1,\infty]$, then 
there exists a constant $c$ such that
\[
\| \, f \, \cdot \, g \, \|_{M^s_{p.q}} \le c\,  \Big(\| \, f \|_{M^0_{p_1,1}} \, \| \, g \|_{M^s_{p_2,q}}  +  \| \, f \|_{M^s_{p_1,q}} \, \| \, g \|_{M^0_{p_2,1}} \Big)
\]
holds for all $f \in M^s_{p_1,q} \cap M^0_{p_1,1}$ and all $g \in M^s_{p_2,q} \cap M^0_{p_2,1}$.
\end{lem}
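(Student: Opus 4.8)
The plan is to follow the proof of Lemma~\ref{mult1} almost verbatim, replacing the single use of $\sup_x$ (i.e.\ the $L_\infty$-norm in $x$) by an application of H\"older's inequality in the variable $x$ with the exponents $p_1/p$ and $p_2/p$. First I would check the hypotheses of Lemma~\ref{lemma1}: by Corollary~\ref{einbettung} we have $M^0_{p_1,1}\hookrightarrow M^0_{\infty,1}$ and $M^0_{p_2,1}\hookrightarrow M^0_{\infty,1}$, and by Corollary~\ref{einbettung2} $M^0_{\infty,1}\hookrightarrow L_\infty$, so $f,g\in L_\infty$ and the local $L_2$-growth condition of Lemma~\ref{lemma1} holds trivially. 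Choosing $\varphi_1=\varphi_2=\varphi$, the identity \eqref{ws-1} then reads
\[
V_{\varphi^2}(fg)(x,\xi) = (2\pi)^{-n/2}\int V_\varphi f(x,\xi-\eta)\,V_\varphi g(x,\eta)\,d\eta .
\]

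Next I would insert the weight $\langle\xi\rangle^s$ and split the $\eta$-integration into the two regions $\Omega_\xi$ and $\Gamma_\xi$ from \eqref{ws-15B}. Using $s\ge 0$ together with $\langle\xi\rangle^s\le 2^s\langle\xi-\eta\rangle^s$ on $\Omega_\xi$ and $\langle\xi\rangle^s\le 2^s\langle\eta\rangle^s$ on $\Gamma_\xi$, exactly as in the proof of Lemma~\ref{mult1}, the weight is shifted onto the $f$-factor in the first region and onto the $g$-factor in the second, yielding $\|fg\|_{M^s_{p,q}}\le 2^s(A+B)$ with $A$ and $B$ the obvious analogues of the terms appearing there.

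The core of the argument is the estimate of $A$. After applying the generalized Minkowski inequality to pull $\int d\eta$ outside the mixed $L_\xi^q(L_x^p)$-norm (and enlarging $\Omega_\xi$ to all of $\R$, the integrand being nonnegative), I would bound the inner $L_x^p$-norm by H\"older's inequality with exponents $p_1/p$ and $p_2/p$; this is legitimate precisely because $p/p_1+p/p_2=p(1/p_1+1/p_2)=1$. This gives
\[
\Big(\int |V_\varphi f(x,\xi-\eta)\langle\xi-\eta\rangle^s\,V_\varphi g(x,\eta)|^p\,dx\Big)^{1/p} \le \|V_\varphi f(\cdot,\xi-\eta)\langle\xi-\eta\rangle^s\|_{L_{p_1}}\,\|V_\varphi g(\cdot,\eta)\|_{L_{p_2}}.
\]
Since the second factor is independent of $\xi$, taking the $L_\xi^q$-norm and substituting $\zeta=\xi-\eta$ produces $\|V_\varphi g(\cdot,\eta)\|_{L_{p_2}}\,\|f\|_{M^s_{p_1,q}}$; integrating in $\eta$ then yields $A\le \|f\|_{M^s_{p_1,q}}\,\|g\|_{M^0_{p_2,1}}$. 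By the symmetric computation on $\Gamma_\xi$ one obtains $B\le \|f\|_{M^0_{p_1,1}}\,\|g\|_{M^s_{p_2,q}}$, and adding the two bounds gives the assertion.

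I do not expect a genuine obstacle, since this is a direct generalization of Lemma~\ref{mult1}; the only point requiring care is the bookkeeping of the H\"older exponents, i.e.\ confirming $p/p_1+p/p_2=1$, which is exactly the defining relation $1/p=1/p_1+1/p_2$ and is where the hypothesis $p\in[1,\infty]$ is used. As in the remark following Lemma~\ref{mult1}, one should also note that the norm on the left is generated by the window $\varphi^2$ while those on the right use $\varphi$; the window-independence in Lemma~\ref{basic}(ii) absorbs this discrepancy into the constant~$c$.
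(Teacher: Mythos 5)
Your proposal is correct and is essentially the paper's own argument: the paper proves Lemma \ref{mult6} in one line by saying one should repeat the proof of Lemma \ref{mult1} with H\"older's inequality in $x$ (exponents determined by $1/p=1/p_1+1/p_2$) replacing the $\sup_x$ step, which is exactly what you carry out in detail. The only cosmetic difference is the order of the two inequalities (the paper mentions H\"older before generalized Minkowski, you apply Minkowski first and then H\"older on the inner $L_x^p$-integral), which does not affect the validity of the estimate.
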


\begin{proof} 
We argue similar as above but using H\"older's inequality with respect to $p$ before applying the generalized Minkowski inequality. 
\end{proof}

\begin{rem}
 \rm
 Observe that $M^0_{p_1,1}, M^0_{p_2,1} \hookrightarrow M^0_{\infty,1} \hookrightarrow L_\infty$.
\end{rem}

\begin{lem}\label{mult7}
Let $1\le p_1,p_2,q \le \infty$ and $s \le  0$. 
We put $1/p:= (1/p_1) + (1/p_2)$. If $p \in [1,\infty]$, then 
there exists a constant $c$ such that
\be\label{ws-305}
 \| \, f \, \cdot \, g \, \|_{M^s_{p.q}}\le c\,  \|\, f\, \|_{M^{|s|}_{p_1,1}}  \, \|\, g\,  \|_{M^s_{p_2, q}}
\ee
holds for all $f \in M^{|s|}_{p_1,1}$, $g \in  M^s_{p_2,q}$ such that $g$ satisfies $g \in L_2^{\ell oc}$ and
\be\label{ws-306}
\int_{Q_k} |g(x)|^2 \, dx \le C \, (1+|k|)^{M} \, , 
\ee
for some  $C>0$ and $M>0$ independent of $k\in \Z$. 
\end{lem}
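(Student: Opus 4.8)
The plan is to mimic the proof of Lemma \ref{mult1} and its generalization Lemma \ref{mult6}, working throughout with the continuous short-time Fourier description of the norms (Definition \ref{modCont}) and exploiting Toft's identity \eqref{ws-1}. First I would check that Lemma \ref{lemma1} is actually available for the pair $(f,g)$: the hypothesis \eqref{ws-306} is precisely the local $L_2$-growth condition required of $g$, while $f \in M^{|s|}_{p_1,1} \hookrightarrow M^0_{p_1,1}\hookrightarrow M^0_{\infty,1}\hookrightarrow L_\infty$ (using $|s|\ge 0$, the $p$- and $s$-monotonicity of Corollary \ref{einbettung}, and Corollary \ref{einbettung2}). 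Thus $f$ is bounded and the growth condition for $f$ holds trivially, so that, writing the window on the left as $\varphi^2$,
\[
V_{\varphi^2}(fg)(x,\xi) = (2\pi)^{-n/2}\int V_\varphi f(x,\xi-\eta)\, V_\varphi g(x,\eta)\, d\eta \qquad (x,\xi\in\R).
\]

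The decisive point, and the place where the sign assumption $s\le 0$ enters, is the distribution of the weight $\langle\xi\rangle^s$. From the elementary inequality $\langle\eta\rangle\le \sqrt{2}\,\langle\xi\rangle\,\langle\xi-\eta\rangle$ one obtains, for $s\le 0$,
\[
\langle\xi\rangle^s \le 2^{|s|/2}\,\langle\xi-\eta\rangle^{|s|}\,\langle\eta\rangle^s\qquad (\xi,\eta\in\R),
\]
and this holds for \emph{every} $\eta$ without any case distinction. This is exactly why the $\Omega_\xi/\Gamma_\xi$ splitting needed in Lemma \ref{mult1} can be avoided here. Observe that the positive power $|s|$ is forced onto the $f$-factor and the negative power $s$ stays on the $g$-factor, which is precisely the asymmetry recorded in the statement.

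Inserting this weight bound into the identity and estimating the $L_p$-norm in $x$ by the generalized Minkowski inequality combined with H\"older's inequality for $1/p = 1/p_1+1/p_2$ (as in Lemma \ref{mult6}), I would arrive at
\[
\langle\xi\rangle^s\,\|V_{\varphi^2}(fg)(\cdot,\xi)\|_{L_p} \ls (\Phi * \Psi)(\xi),
\]
where $\Phi(\zeta):=\langle\zeta\rangle^{|s|}\,\|V_\varphi f(\cdot,\zeta)\|_{L_{p_1}}$ and $\Psi(\eta):=\langle\eta\rangle^{s}\,\|V_\varphi g(\cdot,\eta)\|_{L_{p_2}}$. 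Taking the $L_q$-norm in $\xi$ and applying Young's convolution inequality with exponents $(1,q;q)$ yields $\|\Phi*\Psi\|_{L_q}\le \|\Phi\|_{L_1}\,\|\Psi\|_{L_q}$, and one then recognizes $\|\Phi\|_{L_1}=\|f\|_{M^{|s|}_{p_1,1}}$ and $\|\Psi\|_{L_q}=\|g\|_{M^s_{p_2,q}}$, which is exactly \eqref{ws-305}.

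The step needing the most care is this last one: Young's inequality delivers the desired product of norms only because the $f$-factor carries the $L_1$ (that is, $q=1$) summability in the frequency variable, which matches the hypothesis $f\in M^{|s|}_{p_1,1}$; making the convolution exponents close is the real content of the estimate. By contrast, the verification that Toft's identity applies under \eqref{ws-306}, though the only genuinely technical ingredient and the point where the growth hypothesis on $g$ is consumed, is routine once the embedding $f\in L_\infty$ is noted.
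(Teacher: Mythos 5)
Your proposal is correct and follows essentially the same route as the paper: starting from Toft's identity \eqref{ws-14B}, distributing the weight via $\langle\xi\rangle^s \le 2^{|s|/2}\langle\xi-\eta\rangle^{|s|}\langle\eta\rangle^s$ (valid for all $\eta$ since $s\le 0$, so no $\Omega_\xi/\Gamma_\xi$ splitting is needed), and then applying the generalized Minkowski and H\"older inequalities. The only cosmetic difference is that you package the final step as Young's inequality $L_1 * L_q \hookrightarrow L_q$, while the paper writes out the same estimate directly via Minkowski's integral inequality.
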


\begin{proof}
Point of departure is the formula \eqref{ws-14B}. Instead of the splitting in \eqref{ws-15B}
we use now the elementary inequality 
\[
1+|\eta|^2 \le 2 \,  (1+|\xi|^2)\, (1+|\xi - \eta|^2) 
\]
which implies
\[
(1+|\xi|^2)^{s/2} \le 2^{|s|/2} \,  (1+|\xi - \eta|^2)^{|s|/2}\, (1+|\eta|^2)^{s/2}\, . 
\]
This leads to the estimate
\beqq
&& \hspace*{-0.7cm}  \| \, f \, \cdot \, g \, \|_{M^s_{p.q}} 
\\
& \ls &  
\Big\{ \int \Big[ \int \Big|  \int V_{\varphi} f(x,\xi-\eta)\, \langle \xi - \eta \rangle^{|s|} \,  V_\varphi g (x,\eta) \, 
\langle \eta \rangle^{s} \,  d\eta\, \Big|^p dx\Big]^{q/p} \, d\xi \Big\}^{1/q}
\\
& = & 
\Big\{ \int \Big[ \int \Big|  \int V_{\varphi} f(x,\tau)\, \langle \tau \rangle^{|s|} \,  V_\varphi g (x,\xi -\tau) \, 
\langle \xi - \tau \rangle^s \,  d\tau\, \Big|^p dx\Big]^{q/p} \, d\xi \Big\}^{1/q}\, .
\eeqq
We continue by 
applying the generalized Minkowski inequality and  H\"older's inequality (with respect to $p$) and obtain
\beqq
&& \hspace*{-0.7cm}  \| \, f \, \cdot \, g \, \|_{M^s_{p,q}} 
\\
& \ls &  \int \Big\{ \int \Big[  \| V_{\varphi} f(x,\tau)\, \langle \tau \rangle^{|s|} \|_{L_{p_1}} \, 
\, \|V_\varphi g (x,\xi - \tau)\, \langle \xi - \tau \rangle^s \|_{L_{p_2}} \Big]^{q} \, d\xi \Big\}^{1/q} d\tau
\\
& \ls &  \int  \| V_{\varphi} f(x,\tau)\, \langle \tau \rangle^{|s|} \|_{L_{p_1}} d\tau \,\,  \| g\|_{M_{p_2,q}^s}
\\
& \ls &
\|\, f\, \|_{M^{|s|}_{p_1,1}}  \, \|\, g\, \|_{M^s_{p_2,q}}
\, .
\eeqq
\end{proof}

\begin{rem}
 \rm
Observe that $M^{|s|}_{p_1,1} \hookrightarrow M^{|s|}_{\infty,1} \hookrightarrow L_\infty$. 
In addition we would like to mention that the constant $c$ in \eqref{ws-305} does not depend on the 
constant $C$ in \eqref{ws-306}.
\end{rem}

We recall a final result of Cordero  and Nicola \cite{cordero} concentrating on $s=0$.
These  authors study  $M^{0}_{p_1,q_1} \, \cdot \, M^{0}_{p_2,q_2} \hookrightarrow M^{0}_{p,q}$.

\begin{prop}\label{coni}
 Let $1\le p_1,p_2,q_1,q_2 \le \infty$. Then
$M^0_{p_1,q_1} \, \cdot \, M^0_{p_2,q_2} \hookrightarrow M^0_{p,q}$ holds if and only if 
\[
\frac 1p \le \frac{1}{p_1} +  \frac{1}{p_2} \qquad \mbox{and}\qquad 1 + \frac 1q \le \frac{1}{q_1} +  \frac{1}{q_2}\, .
\]
\end{prop}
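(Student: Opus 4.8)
The plan is to prove both the sufficiency and the necessity, working throughout with the frequency-uniform characterization $\|\, \cdot \,\|^*_{M^0_{p,q}}$ from Proposition \ref{defdecomp}, since the product of two band-limited pieces has controlled support. The starting observation is that if $\supp \cf(\Box_k f) \subset Q_k$ and $\supp \cf(\Box_j g) \subset Q_j$, then $\supp \cf(\Box_k f \cdot \Box_j g) \subset Q_k + Q_j$, which is a cube of fixed sidelength centred near $k+j$. Hence $\Box_m(f g)$ picks up contributions only from pairs $(k,j)$ with $\|k+j-m\|_\infty$ bounded by an absolute constant; this makes the analysis a genuinely discrete convolution problem in the indices.

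For the sufficiency direction I would first estimate $\|\Box_k f \cdot \Box_j g\|_{L^p}$. By H\"older's inequality with $\frac 1p = \frac{1}{p_1}+\frac{1}{p_2}$ one gets $\|\Box_k f \cdot \Box_j g\|_{L^p} \le \|\Box_k f\|_{L^{p_1}} \|\Box_j g\|_{L^{p_2}}$, and when $\frac 1p < \frac{1}{p_1}+\frac{1}{p_2}$ one first applies Nikol'skij's inequality (Lemma \ref{nikolskij}) to pass from the larger to the smaller index, exactly as in the proof of Corollary \ref{einbettung}; the band-limitedness is what makes this legitimate. Writing $a_k := \|\Box_k f\|_{L^{p_1}}$ and $b_j := \|\Box_j g\|_{L^{p_2}}$, the task reduces to bounding $\|(c_m)\|_{\ell_q}$, where $c_m \lesssim \sum_{\|k+j-m\|_\infty \le c_0} a_k b_j$, by $\|(a_k)\|_{\ell_{q_1}} \|(b_j)\|_{\ell_{q_2}}$. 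This is precisely Young's inequality for the discrete convolution $\ell_{q_1} * \ell_{q_2} \hookrightarrow \ell_q$, which holds exactly when $1 + \frac 1q = \frac{1}{q_1}+\frac{1}{q_2}$, and by monotonicity of $\ell_q$-spaces extends to $1 + \frac 1q \le \frac{1}{q_1}+\frac{1}{q_2}$. Collecting the estimates yields $\|fg\|^*_{M^0_{p,q}} \lesssim \|f\|_{M^0_{p_1,q_1}} \|g\|_{M^0_{p_2,q_2}}$.

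For the necessity direction, I would construct explicit test functions adapted to each inequality, in the spirit of the counterexamples used in Theorem \ref{mult2} and Corollary \ref{einbettung2}. Taking $\cf f$ and $\cf g$ to be finite sums $\sum a_k \cf\psi(\cdot - k)$ and $\sum b_j \cf\psi(\cdot - j)$ with $\psi$ narrowly band-limited, one computes $\|f\|_{M^0_{p_1,q_1}} \asymp \|(a_k)\|_{\ell_{q_1}}$, $\|g\|_{M^0_{p_2,q_2}} \asymp \|(b_j)\|_{\ell_{q_2}}$, and $\|fg\|_{M^0_{p,q}} \asymp \|(a * b)\|_{\ell_q}$, so that the embedding forces the discrete Young inequality and hence the condition $1 + \frac 1q \le \frac{1}{q_1}+\frac{1}{q_2}$. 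The condition $\frac 1p \le \frac{1}{p_1}+\frac{1}{p_2}$ on the integrability indices is obtained from a single modulated bump whose $L^p$-norms in the two scales are compared, exactly the mechanism by which sharpness of H\"older is detected.

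The main obstacle I expect is the bookkeeping in the sufficiency step: one must verify that the overlap among the cubes $Q_k + Q_j$ contributing to a fixed $\Box_m(fg)$ is uniformly bounded, so that the passage from the continuous product to the discrete convolution loses only an absolute constant, and one must handle the endpoint cases $p,q \in \{1,\infty\}$ where H\"older, Young, and Nikol'skij degenerate. Since this sharp statement is attributed to Cordero and Nicola \cite{cordero}, I would expect the delicate part of the necessity — producing a single family of examples that simultaneously saturates both the $p$-condition and the $q$-condition — to be where the real work lies, and for that I would consult their construction rather than reinvent it.
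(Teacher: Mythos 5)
The paper does not actually prove Proposition \ref{coni}: it is quoted from Cordero and Nicola \cite{cordero}, with only the remark that necessity is obtained by studying products of Gaussians. Measured against that, your proposal is a legitimate self-contained alternative, and its sufficiency half is in fact exactly the $s=0$ instance of the machinery the authors themselves use to prove Lemma \ref{algebra10}: the support observation $\supp \cf (\Box_k f\cdot \Box_j g)\subset Q_k+Q_j$, so that only a bounded number of values of $k+j$ contribute to a fixed $\Box_m(fg)$; H\"older preceded by Nikol'skij when $\frac1p<\frac1{p_1}+\frac1{p_2}$; and discrete Young $\ell_{q_1}\ast\ell_{q_2}\hookrightarrow\ell_q$, whose validity on $\Z$ is precisely the condition $1+\frac1q\le\frac1{q_1}+\frac1{q_2}$. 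Your $q$-necessity, with $\cf f=\sum_k a_k\cf\psi(\cdot-k)$ and $a_k=b_k=1$ for $\|k\|_\infty\le N$, reproduces Substeps 3.1--3.2 of the proof of Theorem \ref{mult2} at $s=0$ and gives $N^{n+n/q}\lesssim N^{n/q_1+n/q_2}$, hence the claim. What the Gaussian approach of \cite{cordero} buys instead is a single two-parameter family that tests both conditions at once via an explicit short-time Fourier transform; your route trades that for the discrete bookkeeping already present elsewhere in the paper.

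Two points need more care before this is a proof. First, the $p$-condition cannot come from ``a single modulated bump'': for one fixed $\psi$ the inequality $\|\psi^2\|_{L_p}\le C\|\psi\|_{L_{p_1}}\|\psi\|_{L_{p_2}}$ carries no information. You need the dilation family $\psi(\lambda\,\cdot)$ with $\lambda\to 0$, whose Fourier support stays inside $Q_0$, so that all three modulation norms collapse to Lebesgue norms and comparing $\lambda^{-n/p}$ with $\lambda^{-n/p_1-n/p_2}$ yields $\frac1p\le\frac1{p_1}+\frac1{p_2}$; this is exactly what varying the variance of the Gaussian does in \cite{cordero}. (Only the spreading regime gives a constraint: for $\lambda\to\infty$ the Fourier support covers many cubes and the $q$-summation takes over, which is why one gets an inequality here rather than the equality forced by sharpness of H\"older on $L_p$.) Second, for rough factors such as $M^0_{\infty,2}\cdot M^0_{\infty,2}\hookrightarrow M^0_{\infty,\infty}$ the double series $\sum_{k,j}\Box_k f\cdot\Box_j g$ is only formal; to give the product a meaning one must either restrict to a dense subclass and extend, or run the $S^jf\cdot S^jg$ limiting argument of Theorems \ref{mult8} and \ref{algebra12}. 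Neither point breaks the strategy, but both must be written out.
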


\begin{rem}
 \rm 
{(i)}  Proposition \ref{coni} shows that in case $s=0$ in Lemma \ref{mult7} we proved an optimal estimate.\\ 
{(ii)} Necessity of the restrictions in Proposition \ref{coni} is shown by studying products of Gaussian functions.
For extensions of Proposition \ref{coni} to the case of products with more than two factors we refer to Guo et all \cite{guo} and 
Toft \cite{toftCont}.
\end{rem}


\subsection{Products of a distribution with a  function} \label{general2}


Up to now  we considered only products of either $L_\infty$-functions or $L_2 ^{\ell oc}$-functions with $L_\infty$-functions. 
But now we turn to the product of a distribution with a function which is not assumed to be $C^\infty$.
This requires a definition.

\subsection*{The definition of the product in $\cs'$}

Let $\psi \in \cs$ be a  function in $C_0^\infty $ such that $\psi (\xi)=1$ in a neighbourhood of the origin.
We define 
\[ S^j  f (x) = \cfi [ \psi (2^{-j} \xi ) \, \cf  f(\xi ) ]
( x ), \quad j = 0, 1, \ldots  \, . \]
The Paley-Wiener theorem tells us that $S^j f$ is an
entire analytic function of exponential type. 
Hence, if $f, \: g \in \cs'$ the products $S^j f \cdot
S^j g$ makes sense for any $j$.
Further, 
\[ \lim_{j \rightarrow \infty} \cfi [ \psi (2^{-j} \xi ) \, \cf  f(\xi ) ]
( \cdot ) = f \qquad (\mbox{convergence in } \cs') \]
for any $f \in \cs'$.

\begin{defn}
Let $f, \: g \in {\cs}'.$ We define
\[ f\, \cdot\,  g = \lim_{j \rightarrow \infty} \, S^j f \cdot S^j g \]
whenever the limit on the right-hand side exists in $\cs'$. We call $f
\cdot g$ the product of $f$ and $g$. 
\end{defn}

\begin{rem}
 \rm
 In defining the product we followed a usual practice, see, e.g., \cite{peetre}, \cite[2.8]{triebel}, \cite{Jo1,Jo2} and \cite[4.2]{RS}.
 For basic properties of this notion we refer to 
 \cite{Jo1,Jo2} and \cite[4.2]{RS}.
\end{rem}

\begin{thm}\label{mult8}
Let $1\le p_1,p_2,q \le \infty$ and $s \le  0$. 
We put $1/p:= (1/p_1) + (1/p_2)$. 
If $p \in [1,\infty]$, then 
there exists a constant $c$ such that
\[
 \| \, f \, \cdot \, g \, \|_{M^s_{p,q}}\le c\,   \|\, f\, \|_{M^{|s|}_{p_1,1}}  \, \|\, g\,  \|_{M^s_{p_2, q}}) 
\]
holds for all $f \in M^{|s|}_{p_1,1}$ and  $g \in  M^s_{p_2,q}$.
\end{thm}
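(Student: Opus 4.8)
The plan is to reduce the assertion to Lemma \ref{mult7} by regularization and then to pass to the limit with the Fatou property (Lemma \ref{basic}(iv)). Write $w_j := S^j f\cdot S^j g$. By the Paley--Wiener theorem each $S^j f$ and $S^j g$ is a smooth function of at most polynomial growth, so in particular $S^j g\in L_2^{\ell oc}$ satisfies \eqref{ws-306}; hence Lemma \ref{mult7} applies to the \emph{classical} product $w_j$ and, since its constant does not depend on the bound \eqref{ws-306}, gives $\|w_j\|_{M^s_{p,q}}\le c\,\|S^j f\|_{M^{|s|}_{p_1,1}}\,\|S^j g\|_{M^s_{p_2,q}}$ with $c$ independent of $j$. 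The argument then rests on three ingredients: a uniform-in-$j$ bound for $w_j$, convergence of $w_j$ in $\cs'$, and Fatou to transport the bound to the limit $f\cdot g$.

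First I would show that the operators $S^j$ are uniformly bounded on the spaces involved. Since $\Box_k(S^j h)=\cfi[\psi(2^{-j}\cdot)]*\Box_k h$ and $\cfi[\psi(2^{-j}\cdot)]=2^{jn}(\cfi\psi)(2^j\cdot)$ has an $L_1$-norm independent of $j$, Young's inequality yields $\|\Box_k(S^j h)\|_{L_p}\le c\,\|\Box_k h\|_{L_p}$ with $c$ independent of $j,k$. Summing in the appropriate $\ell_1$- resp. $\ell_q$-norm gives $\|S^j f\|_{M^{|s|}_{p_1,1}}\ls\|f\|_{M^{|s|}_{p_1,1}}$ and $\|S^j g\|_{M^s_{p_2,q}}\ls\|g\|_{M^s_{p_2,q}}$, and hence the uniform bound $\|w_j\|_{M^s_{p,q}}\le c\,\|f\|_{M^{|s|}_{p_1,1}}\|g\|_{M^s_{p_2,q}}$. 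The second index $1$ in $M^{|s|}_{p_1,1}$ is decisive here: that norm is an absolutely convergent sum over $k\in\Z$, so its high-frequency tail vanishes and $S^j f\to f$ \emph{in} $M^{|s|}_{p_1,1}$, whereas for $g$ I only use the generic $S^j g\to g$ in $\cs'$.

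For convergence I would fix $\phi\in\cs$ and prove that $\langle w_j,\phi\rangle$ is Cauchy. For $j<m<m'$, writing $R_{j,l}:=S^l-S^j$, one has
\[
w_{m'}-w_m=\big(R_{j,m'}f\cdot S^{m'}g-R_{j,m}f\cdot S^m g\big)+S^j f\cdot\big(S^{m'}g-S^m g\big).
\]
Tested against $\phi$, the second term equals $\langle S^{m'}g-S^m g,\,S^j f\,\phi\rangle$; since $S^j f\,\phi\in\cs$ is \emph{fixed} and $S^l g\to g$ in $\cs'$, it tends to $0$ as $m,m'\to\infty$ for each fixed $j$. For the first term I use the duality bound $|\langle h,\phi\rangle|\le c\,\|h\|_{M^s_{p,q}}\|\phi\|_{M^{-s}_{p',q'}}$ recalled in the proof of Lemma \ref{notw}, together with Lemma \ref{mult7} and the uniform bound on $S^l$, to get $|\langle R_{j,l}f\cdot S^l g,\phi\rangle|\le c\,\|R_{j,l}f\|_{M^{|s|}_{p_1,1}}\|g\|_{M^s_{p_2,q}}\|\phi\|_{M^{-s}_{p',q'}}$, which is bounded by a multiple of the tail $\|f-S^j f\|_{M^{|s|}_{p_1,1}}$, uniformly in $l$. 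Choosing $j$ so large that this tail is small and then $m,m'$ large makes $|\langle w_{m'}-w_m,\phi\rangle|$ arbitrarily small. Thus $w_j\rightharpoonup h$ for some $h\in\cs'$ (continuity of the limit functional by Banach--Steinhaus), and by definition of the product $h=f\cdot g$, so the product exists.

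Finally, since $\sup_j\|w_j\|_{M^s_{p,q}}<\infty$ and $w_j\rightharpoonup f\cdot g$ in $\cs'$, the Fatou property (Lemma \ref{basic}(iv)) gives $f\cdot g\in M^s_{p,q}$ together with $\|f\cdot g\|_{M^s_{p,q}}\le\sup_j\|w_j\|_{M^s_{p,q}}\le c\,\|f\|_{M^{|s|}_{p_1,1}}\|g\|_{M^s_{p_2,q}}$, as claimed. I expect the main obstacle to be the convergence step: the cross term carrying the regularization difference of the singular factor $g$ cannot be controlled in $M^s_{p,q}$-norm when $q=\infty$, because its high-frequency tail does not decay. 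The point is to avoid measuring it in that norm at all, exploiting the asymmetry of the two factors — strong convergence $S^j f\to f$ in the $\ell_1$-type space $M^{|s|}_{p_1,1}$ for the regular factor, but only weak-$\ast$ convergence in $\cs'$ for the singular one — so that the troublesome term is always paired with the \emph{fixed} Schwartz function $S^j f\,\phi$.
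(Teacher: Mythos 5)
Your proposal is correct, and its overall architecture coincides with the paper's: regularize with $S^j$, apply Lemma \ref{mult7} to the classical products $S^jf\cdot S^jg$ (using that the constant there is independent of the bound \eqref{ws-306}, so the estimate is uniform in $j$), establish convergence in $\cs'$, and finish with the Fatou property. The genuine divergence is in the convergence step for $q=\infty$, where $S^jg\to g$ in $M^s_{p_2,q}$ fails. The paper (Steps 2 and 3 of its proof) exploits the compact Fourier support of $S^kf\,S^kg-S^jf\,S^jg$ to replace the test function $\varphi$ by a high-frequency piece $h_2=\cfi[(\psi^*(2^{-k}\cdot)-\psi^*(2^{-j}\cdot))\cf\varphi]$, and then runs an explicit H\"older pairing over the frequency-uniform decomposition, \eqref{ws-34}, to see that $\|h_2\|_{M^{-s}_{1,1}}\to 0$; this requires a separate treatment of $p=1$ and $p>1$. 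You instead use the asymmetric three-index splitting $w_{m'}-w_m=(R_{j,m'}f\cdot S^{m'}g-R_{j,m}f\cdot S^mg)+S^jf\,(S^{m'}g-S^mg)$, so that the only term carrying the regularization difference of the singular factor is paired with the \emph{fixed} Schwartz function $S^jf\,\phi$ and needs only $\cs'$-convergence of $S^lg$, while the remaining terms are killed by the strong convergence $S^jf\to f$ in the $\ell_1$-type space $M^{|s|}_{p_1,1}$ together with Lemma \ref{mult7} and the duality bound $|(h,\phi)|\le c\,\|h\|_{M^s_{p,q}}\|\phi\|_{M^{-s}_{p',q'}}$ (the norm equivalence already invoked in the proof of Lemma \ref{notw}). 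What your route buys is uniformity: all $1\le p,q\le\infty$ are handled in one stroke, with no Fourier-support bookkeeping; what it costs is the reliance on the quoted duality description of $M^s_{p,q}$, where the paper's pairing estimate \eqref{ws-34} is elementary and self-contained at the level of the decomposition $(\sigma_k)_k$. (For $q<\infty$ the paper in fact obtains norm convergence of the products, so Fatou is not even needed there; your argument yields only weak convergence, which is all the theorem requires.)
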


\begin{proof}
We have to show that the limit of $(S^j f \, \cdot \, S^jg)_j$ exists in $\cs'$.
The remaining assertions, $ \lim_{j\rightarrow \infty} S^j f \, \cdot \, S^jg \in M^s_{p,q} $ and the norm estimates will follow by employing the Fatou property, 
see Lemma \ref{basic},  and Lemma \ref{mult7}.
\\
{\em Step 1.} Let $1\le q<\infty$.
We have 
 \[
  \lim_{j\to \infty} \| S^j f- f\|_{M^s_{p,q}}= 0 \qquad \mbox{for all}\quad f \in M^s_{p,q}\, .
 \]
In addition it is easily seen that
\be\label{ws-30}
 \sup_{j\in \N_0} \| S^j f\|_{M^s_{p,q}}\le \|\cfi \psi \|_{L_1} \, \|\, f \, \|_{M^s_{p,q}}  
\ee
holds for all $ f \in M^s_{p,q}$.
Hence,  we conclude by means of Lemma \ref{mult7}
\beqq
&& \hspace*{-1.2cm}  \|S^k f \, S^k g - S^j f \, S^j g  \|_{M^s_{p,q}}  \\
& \le & \|( S^k f  - S^j f) \, S^k g  \|_{M^s_{p,q}}  + \|S^j f \, (S^k g - S^j g)  \|_{M^s_{p,q}}
\\
& \le & c\, 
 ( \|\, S^k g\, \|_{M^s_{p_2,q}}  \, \|\, S^k f  - S^j f\, \|_{M^{|s|}_{p_1,1}} 
+    \|\, S^k g - S^j g\, \|_{M^s_{p_2,q}}  \, \|\,  S^j f\, \|_{M^{|s|}_{p_1,1}}) 
\eeqq
the convergence of $(S^kf \, \cdot S^k g)_k$ in $M^s_{p,q}$ and therefore in $\cs'$, see Lemma \ref{basic}.
\\
{\em Step 2.} Let $q= \infty$ and suppose $p=1$.
Let $\psi, \psi^* \in C_0^\infty$ be  functions such that $\psi (\xi) = 1$, $|\xi|\le 1 $, $\psi (\xi) =0$ if $|\xi|>3/2$ and 
 $\psi^* (\xi) = 1$, $|\xi|\le 6 $. Then checking the Fourier support of the product $ S^k f \, S^k g$ and using linearity of $\cf$  we conclude
 \beqq
  \Big\langle S^k f \, S^k g  &- & S^j f \, S^j g , \varphi \Big\rangle  \\
  & = & \Big\langle S^k f \, S^k g  -  S^j f \, S^j g , \cfi [(\psi^* (2^k\xi) - \psi^* (2^j \xi))\cf \varphi(\xi)](\, \cdot \, ) \Big\rangle  \, .
 \eeqq
For brevity we put 
\[
h_1:=  S^k f \, S^k g  -  S^j f \, S^j g \qquad \mbox{and}\qquad h_2:= \cfi [(\psi^* (2^k\xi) - \psi^* (2^j \xi))\cf \varphi(\xi)](\, \cdot \, )\, .
\]
$h_1,h_2$ are smooth functions with compactly supported Fourier transform.
Hence
\[
 h_1 = \sum_{k \in I_1} \Box_k h_1 \qquad \mbox{and}\qquad h_2 = \sum_{k \in I_2} \Box_k h_2\, ,   
\]
where $I_1,I_2$ are finite subsets of $\Z$.
This allows us to rewrite $\Big\langle S^k f \, S^k g  -  S^j f \, S^j g , \varphi \Big\rangle$ as follows
\beqq
 \Big\langle S^k f \, S^k g  -  S^j f \, S^j g , \varphi \Big\rangle & = &  
 \sum_{k \in I_1}\sum_{\ell \in I_2} \int  \Box_k h_1(x) \, \Box_\ell h_2 (x)\, dx
 \\
& = &  
 \sum_{\ell  \in I_2}\sum_{k \in I_1: \: Q_k\cap Q_\ell \neq \emptyset} \int  \Box_k h_1(x) \, \Box_\ell h_2 (x)\, dx 
 \, .
\eeqq
Application of H\"older's inequality yields
\beq\label{ws-34}
\Big| \Big\langle S^k f \, S^k g  -  S^j f \, S^j g , \varphi \Big\rangle\Big| & \le &   2^n \sup_{k \in \Z} \langle k\rangle^{s} \|\,   \Box_k h_1\, \|_{L_{p_1}} \, 
\Big( \sum_{\ell \in \Z} \langle \ell \rangle^{-s}\| \, \Box_\ell h_2 \, \|_{L_{p_2}}\Big)
\nonumber
\\
& \le &   2^n \,  \|\,  h_1\, \|_{M^s_{p_1,\infty}} \, \| \,  h_2 \, \|_{M^{-s}_{p_2,1}}
 \, .
\eeq
By means of Lemma \ref{mult7} and \eqref{ws-30} we know that 
\beqq
\|\,  h_1\, \|_{M^s_{p_1,\infty}} & = &  \|\, S^k f \, S^k g  -  S^j f \, S^j g \, \|_{M^s_{p_1,\infty}} 
\\
& \le &  
c_1\, \sup_{j\in \N_0}   \|\, S^j g\, \|_{M^s_{p_2,q}}  \, \|\, S^jf\, \|_{M^{|s|}_{p_1,1}}  
\\
& \le &  
c_2\,    \|\, g\, \|_{M^s_{p_2,q}}  \, \|\, f\, \|_{M^{|s|}_{p_1,1}} \, .
\eeqq
On the other hand, if $j \le k$,  a standard Fourier multiplier argument yields
\beqq
\| \,  h_2 \, \|_{M^{-s}_{p_2,1}} & = & \| \,  \cfi [(\psi^* (2^k\xi) - \psi^* (2^j \xi))\cf \varphi(\xi)](\, \cdot \, ) \, \|_{M^{-s}_{p_2,1}} 
\\
&\le & C \, \sum_{A\, 2^j \le |\ell| \le B \, 2^k} \langle \ell \rangle^{-s}\, \| \,  \Box_\ell \varphi \, \|_{L_{p_2}} 
\eeqq
for appropriate positive constants $A,B,C$ independent of $j,k$ and $\varphi$.
Since $\varphi \in \cs \subset M^{-s}_{p_2,1}$ we conclude that the right-hand side tends to $0$ if $j \to \infty$.
This finally proves
\[
 \Big|\Big\langle S^k f \, S^k g  -  S^j f \, S^j g , \varphi \Big\rangle\Big| < \varepsilon \qquad \mbox{if}\quad j,k \ge j_0 (\varepsilon)\, . 
\]
Hence $(S^k f \, S^k g)_k$ is weakly convergent in $\cs'$.
Now, Lemma \ref{mult7} yields the claim also for $q=\infty$. 
\\
{\em Step 3.} Let $q= \infty$ and suppose $1 < p \le \infty$. We employ \eqref{ws-34} with $p_1 = \infty$ and $p_2 =1$ and afterwards Proposition \ref{einbettung1}.
It follows
\beqq
\Big| \Big\langle S^k f \, S^k g  -  S^j f \, S^j g , \varphi \Big\rangle\Big|    
& \le &   2^n \,  \|\,  h_1\, \|_{M^s_{\infty,\infty}} \, \| \,  h_2 \, \|_{M^{-s}_{1,1}}
\\
& \le &   c_1 \,  \|\,  h_1\, \|_{M^s_{p,q}} \, \| \,  h_2 \, \|_{M^{-s}_{1,1}}
 \, .
\eeqq
Now we can argue as in Step 2.
\end{proof}

\begin{rem}
 \rm
For a partial result concerning Theorem \ref{mult8} we refer to \\
Feichtinger \cite{feichtinger}.
\end{rem}

\subsection{One example}

We consider the Dirac $\delta$  distribution.
Since 
\[
\cf \delta (\xi) = (2\pi)^{-n/2}\, , \qquad \xi \in \R\, , 
\]
it is easily seen that $\delta \in M^0_{p,\infty}$ for all $p$.
Also not difficult to see is that $M^0_{1,\infty}$ is the smallest space of type $M^s_{p,q}$ to which $\delta $ belongs to.
Thm. \ref{mult8} yields
\[
 \| \, f \, \cdot \, \delta \, \|_{M^0_{p,\infty}}\le c\,   \|\, \delta\, \|_{M^0_{p,\infty}}  \, \|\, f\, \|_{M^0_{\infty,1}} 
\]
with some $c$ independent of $f \in M^0_{\infty,1}$.
With other words, we can multiply $\delta$ with a modulation space $M^s_{p,q}$ if this space is embedded into $C_{ub}$, see Cor. \ref{einbettung2}.
This looks reasonable.


\subsection{The second method}


Finally we would like to investigate also the cases $\min(s_1 , s_2) \le  n/q'$.
For dealing with this special situation we turn to a different  method which will allow a better localization in the Fourier image.
Therefore we shall work with the frequency-uniform decomposition $(\sigma_k)_k$.
Recall that $\supp \sigma_k \subset Q_k := \{\xi \in\R : -1\leq \xi_i-k_i\leq 1, \, i=1,\ldots,n\}$. 
For brevity we put 
\[ f_k (x):= \cfi [\sigma_k (\xi) \cf f (\xi)](x) \, , \qquad x \in \R\, , \quad k \in \Z \, . \]
Then, at least formally,  we have the following representation of the product $f\cdot g$ as
\[ f\cdot g = \sum_{k,l\in \Z} f_k \cdot g_l. \]
In what follows we shall study bounds for related partial sums.

\begin{lem} \label{algebra10}
Let $1\leq p_1,p_2\leq \infty$, $1 < q \le \infty$ and $s_0 , s_1,s_2 \in \re$.  
Define $p$ by $\frac{1}{p}:= \frac{1}{p_1}+\frac{1}{p_2}$. 
If $p \in [1,\infty]$, $0 \le s_0 \le \min(s_1, s_2)$ and  $s_2 + s_1 - s_0 > n/q'$, 
then there exists a constant $c$ such that 
\[ 
\|\sum_{k,l\in \Z} f_k \cdot g_l\|_{{M}_{p,q}^{s_0}} \leq c \, \| f \|_{{M}_{p_1,q}^{s_1}}\,  \| g\|_{{M}_{p_2,q}^{s_2}} 
\]
holds for all $f, g\in \cs'$ such that $\supp \cf f$ and $\supp \cf g$ are compact.
The constant $c$ is independent from $\supp \cf f$ and $\supp \cf g$, respectively.
\end{lem}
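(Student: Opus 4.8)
The plan is to transfer everything to a discrete inequality for the sequences of block-norms and then run a paraproduct-type argument. Since the claimed bound is trivial when the right-hand side is infinite, I may assume $\|f\|_{M^{s_1}_{p_1,q}}$ and $\|g\|_{M^{s_2}_{p_2,q}}$ are finite; together with the compactness of $\supp\cf f$ and $\supp\cf g$ this guarantees that $f$ and $g$ split into \emph{finitely many} genuine functions $f_k=\Box_k f\in L_{p_1}$ and $g_l=\Box_l g\in L_{p_2}$, so every interchange of (finite) sums below is legitimate. The first step is Fourier-support bookkeeping: since $\cf(f_k\cdot g_l)$ is supported in $Q_k+Q_l$, a cube centred at $k+l$, one has $\Box_m(f_k g_l)=0$ unless $\|m-(k+l)\|_\infty\le 3$. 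Using that $\sigma_m(\cdot)=\sigma_0(\cdot-m)$, so that $\Box_m$ acts by convolution with modulations of one fixed $L_1$-kernel and is hence bounded on $L_p$ uniformly in $m$, followed by H\"older's inequality for $1/p=1/p_1+1/p_2$, I obtain
\[
\|\Box_m(f\cdot g)\|_{L_p}\le c\!\!\sum_{\|m-(k+l)\|_\infty\le 3}\!\! a_k\,b_l,\qquad a_k:=\|f_k\|_{L_{p_1}},\ \ b_l:=\|g_l\|_{L_{p_2}}.
\]

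Writing $A_k:=\langle k\rangle^{s_1}a_k$ and $B_l:=\langle l\rangle^{s_2}b_l$, so that $\|(A_k)\|_{\ell_q}=\|f\|_{M^{s_1}_{p_1,q}}$ and $\|(B_l)\|_{\ell_q}=\|g\|_{M^{s_2}_{p_2,q}}$ by Proposition \ref{defdecomp}, the task reduces to the purely discrete estimate
\[
\Big\|\Big(\langle m\rangle^{s_0}\!\!\sum_{\|m-(k+l)\|_\infty\le 3}\!\! a_k b_l\Big)_m\Big\|_{\ell_q}\lesssim \|(A_k)\|_{\ell_q}\,\|(B_l)\|_{\ell_q}.
\]
I would split the inner summation according to $\|l\|_\infty\le\|k\|_\infty$ and $\|k\|_\infty<\|l\|_\infty$ and treat the first region in detail, the second being analogous with $k,l$ and $s_1,s_2$ interchanged. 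In the first region $m\approx k+l$ with $\|l\|_\infty\le\|k\|_\infty$ forces $\langle m\rangle\lesssim\langle k\rangle$, and since $s_0\ge 0$ this yields $\langle m\rangle^{s_0}a_k b_l\lesssim \langle k\rangle^{s_0-s_1}A_k\,\langle l\rangle^{-s_2}B_l$.

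Now the key manoeuvre is to take the $\ell_q(m)$-norm by Minkowski's inequality after pulling the summation over $l$ outside. For each fixed $l$ the constraint $\|m-(k+l)\|_\infty\le 3$ makes $m\mapsto k$ a bounded-to-one correspondence, so the inner $\ell_q(m)$-norm is controlled by $\ell_q(k)$; using $s_0\le s_1$ together with $\|k\|_\infty\ge\|l\|_\infty$ to replace $\langle k\rangle^{s_0-s_1}$ by $\langle l\rangle^{s_0-s_1}$, I am left with
\[
\lesssim \|(A_k)\|_{\ell_q}\sum_l \langle l\rangle^{\,s_0-s_1-s_2}B_l=\|(A_k)\|_{\ell_q}\sum_l \langle l\rangle^{-\gamma}B_l,\qquad \gamma:=s_1+s_2-s_0,
\]
and a final H\"older inequality in $l$ bounds this by $\big(\sum_l\langle l\rangle^{-\gamma q'}\big)^{1/q'}\|(A_k)\|_{\ell_q}\|(B_l)\|_{\ell_q}$. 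The series converges precisely when $\gamma q'>n$, i.e. under the hypothesis $s_1+s_2-s_0>n/q'$; the complementary region consumes $s_0\le s_2$ in the same way, and the case $q=\infty$ (so $q'=1$, $\gamma>n$) is identical with the triangle inequality replacing Minkowski.

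The step I expect to be delicate is exactly this last one. A naive pointwise (supremum over $m$) bound on the multiplier sum would require summability of $\sum_k\langle k\rangle^{-\gamma}$, i.e. $\gamma>n$, which is too strong; the sharp threshold $\gamma>n/q'$ is recovered only by exploiting $\ell_q$-summability through the Minkowski-then-H\"older splitting above. This is the genuinely resonant (high--high) part of the product, where $\langle m\rangle$ may be far smaller than $\langle k\rangle\approx\langle l\rangle$, and it is where both the condition on $\gamma$ and the hypotheses $0\le s_0\le\min(s_1,s_2)$ are used. Finally, since every bound is expressed through the modulation norms and the number $3$ above is absolute, the constant $c$ is independent of $\supp\cf f$ and $\supp\cf g$, as asserted.
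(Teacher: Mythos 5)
Your argument is correct and follows essentially the same route as the paper: the identical Fourier-support bookkeeping reducing matters to a discrete bilinear convolution estimate, the same splitting into the two regions according to which of the two frequency indices dominates (using $0\le s_0\le\min(s_1,s_2)$ to shift the weights), and the same $\ell_q$--$\ell_{q'}$ duality to reach the sharp threshold $s_1+s_2-s_0>n/q'$. The only difference is cosmetic: you apply Minkowski's inequality to pull the sum over the low-frequency index outside the $\ell_q$-norm and then H\"older on that outer sum, whereas the paper applies H\"older to the inner sum first and then interchanges the order of summation; both yield the same condition and constants independent of the supports.
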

	
\begin{proof}
Later on we shall use the same strategy of proof as below in slightly different situations.
For this reason and later use we shall take care of all constants showing up in our estimates below.
\\
{\em Step 1.} Preparations.
Determining the Fourier support of $f_j \cdot g_l$ we see that
\begin{eqnarray*}  \supp \cf (f_j\cdot  g_l)
& = & \supp (\cf f_j \ast \cf g_l) \\  & \subset& \{\xi\in \R :  j_i+l_i-2 \leq \xi_i \leq j_i+l_i+2, \, i=1,\ldots,n\}. 
\end{eqnarray*}
Hence, the term $\cfi  (\sigma_k \cf (f_j \cdot g_l))$  vanishes if $\|k - (j+l)\|_\infty \ge  3$. 
In addition, since  $\supp \cf f$ and $\supp \cf g$ are compact,  the sum $\sum_{j,l \in \Z} f_j \cdot g_l$ is a finite sum.
We obtain

\begin{eqnarray*}
\sigma_k \cf (f\cdot g) & = & \sigma_k \cf \Big(\sum_{j,l\in\Z} f_j\cdot g_l \Big) 
 =   \sigma_k \cf \Big(\sum_{\substack{j,l \in \Z, \\ k_i-3<j_i+l_i < k_i +3, \\ 
i=1,\ldots, n}} f_j\cdot  g_l \Big)  \\
& \stackrel{[r=j+l]}{=} & \sum_{\substack{r \in \Z, \\ k_i-3<r_i < k_i +3, 
\\ i=1,\ldots, n}}  \sum_{l\in\Z}\, 
\sigma_k \cf \big( f_{r-l}\cdot  g_l \big) \, .
\end{eqnarray*}

Consequently 
\begin{eqnarray*}
\left\|\cfi \big(\sigma_k \cf (f\cdot g)\big) \right\|_{L_p} 
& \leq & \sum_{\substack{r \in \Z, \\ k_i-3<r_i < k_i +3, \\ i=1,\ldots, n}} \sum_{l\in\Z} \| \cfi \big( \sigma_k \cf ( f_{r-l}\cdot  g_l) 
\big)\|_{L_p} \\
& \stackrel{[t=r-k]}{=} & \sum_{\substack{t \in \Z, \\ -3<t_i < 3, \\ i=1,\ldots, n}} \sum_{l\in\Z} \| \cfi 
\big( \sigma_k \cf ( f_{t-(l-k)}\cdot  g_l) \big)\|_{L_p}.
\end{eqnarray*}
{\em Step 2.} Norm estimates.
These preparations yield the following  estimates
\begin{eqnarray*}
 \Big( \sum_{k\in\Z} && \hspace{-0.9cm}  \langle k\rangle^{s_0 q} \|\cfi \big(\sigma_k \cf (f\cdot g) \big)\|_{L_p}^q \Big)^{\frac{1}{q}} 
\\
& \leq & \Bigg( \sum_{k\in\Z}  \langle k\rangle^{s_0q} \Bigg[ \sum_{\substack{t \in \Z, \\ -3<t_i < 3, \\ i=1,\ldots, n}}
 \sum_{l\in\Z} \| \cfi \big(\sigma_k \cf ( f_{t-(l-k)} \cdot g_l) \big)\|_{L_p} \Bigg]^q \Bigg)^{\frac{1}{q}} \\
& \leq & \sum_{\substack{t \in \Z, \\ -3<t_i < 3, \\ i=1,\ldots, n}} \left( \sum_{k\in\Z}  \langle k\rangle^{s_0q}
\left[ \sum_{l\in\Z} \| \cfi \big(\sigma_k \cf ( f_{t-(l-k)} \cdot g_l) \big)\|_{L_p} \right]^q \right)^{\frac{1}{q}} \, .
\end{eqnarray*}
Observe
\begin{eqnarray*}
 \| \cfi \big(\sigma_k \cf ( f_{t-(l-k)} \cdot g_l) \big)\|_{L^p} & = & (2\pi)^{-n/2} \| (\cfi \sigma_k) *  (f_{t-(l-k)} \cdot g_l) \, \|_{L_p}
 \\
 &\le & (2\pi)^{-n/2} \| \cfi \sigma_k \, \|_{L^1} \, \|\,  f_{t-(l-k)} \cdot g_l \, \|_{L_p}
\\
& =  & (2\pi)^{-n/2} \| \cfi \sigma_0 \, \|_{L^1} \, \|\,  f_{t-(l-k)} \cdot g_l \, \|_{L_p} \, ,
 \end{eqnarray*}
where we used Young's inequality. We put $c_1:= (2\pi)^{-n/2} \| \cfi \sigma_0 \, \|_{L_1}$. This implies
\begin{eqnarray*}
\Big( \sum_{k\in\Z}  \langle k\rangle^{s_0 q} && \hspace{-0.7cm} \|\cfi \big(\sigma_k \cf (f\cdot g) \big)\|_{L_p}^q \Big)^{\frac{1}{q}} 
\\
& \leq & c_1\,  \sum_{\substack{t \in \Z, \\ 
-3<t_i < 3, \\ i=1,\ldots, n}} \left( \sum_{k\in\Z}  \langle k\rangle^{s_0q} \left[ \sum_{l\in\Z} 
\|f_{t-(l-k)}\cdot  g_l\|_{L_p} \right]^q \right)^{\frac{1}{q}}\, .
\end{eqnarray*}
We continue by using H\"older's inequality to get
\begin{eqnarray*} 
 \Big( \sum_{k\in\Z}   \langle k\rangle^{s_0q} && \hspace{-0.7cm} \|\cfi \big(\sigma_k \cf (f\cdot g) \big)\|_{L_p}^q \Big)^{\frac{1}{q}} 
 \\
& \leq & c_2 \, \max_{\substack{t \in \Z, \\ -3<t_i < 3, \\ i=1,\ldots, n}} \left( \sum_{k\in\Z}  \langle k\rangle^{s_0q} 
\left[ \sum_{l\in\Z} \|f_{t-(l-k)}\|_{L_{p_1}} \|g_l\|_{L_{p_2}} \right]^q \right)^{\frac{1}{q}} \hspace{1cm} 
\end{eqnarray*}
with $c_2:= c_1 \, 5^n$. Since $s_0\ge 0$ elementary calculations  yield

\begin{eqnarray*}
  \langle k\rangle^{s_0}
\Big[ \sum_{l\in\Z} && \hspace{-0.7cm} \|f_{t-(l-k)}\|_{L_{p_1}}\,  \|g_l\|_{L_{p_2}} \Big] 
\\
& \leq & 2^{s_0}\,  \sum_{\substack{l\in\Z, \\
 |l|\leq |l-k|}}  \langle k-l\rangle^{s_0} \|f_{t-(l-k)}\|_{L_{p_1}}   \|g_l\|_{L_{p_2}}  
\\
& & \qquad + 2^{s_0} \sum_{\substack{l\in\Z, \\ |l-k|\leq |l|}}   \|f_{t-(l-k)}\|_{L_{p_1}}  \langle l\rangle^{s_0} 
\|g_l\|_{L_{p_2}}  \,.
\end{eqnarray*}
Both parts of this right-hand side will be  estimated separately. We put 
\begin{eqnarray*}
 S_{1,t,k} & := &  \sum_{\substack{l\in\Z, \\
 |l|\leq |l-k|}}  \langle k-l\rangle^{s_1} \|f_{t-(l-k)}\|_{L_{p_1}}  \langle l\rangle^{s_2} \|g_l\|_{L_{p_2}} \, 
\langle k-l\rangle^{s_0 - s_1} \, \langle l\rangle^{-s_2} \, ; 
\\
S_{2,t,k} & := & \sum_{\substack{l\in\Z, \\ |l-k|\leq |l|}}  \langle k-l\rangle^{s_1} \, 
\|f_{t-(l-k)}\|_{L_{p_1}}  \langle l\rangle^{s_2} \, \|g_l\|_{L_{p_2}} \, \langle k-l\rangle^{- s_1} \, 
\langle l\rangle^{s_0-s_2} \, .
\end{eqnarray*}
With $\frac{1}{q}+\frac{1}{q'}=1$  we find
\begin{eqnarray*}
S_{1,t,k} & \stackrel{[j=l-k]}{=} &  \sum_{\substack{j\in\Z, \\ |j+k|\leq |j|}}   \langle j\rangle^{s_0} \|f_{t-j}\|_{L_{p_1}} 
 \|g_{j+k}\|_{L_{p_2}}  
\\
& \leq &   \Big( \sum_{\substack{j\in\Z, \\ |j+k|\leq |j|}} ( \langle j\rangle^{s_1} \|f_{t-j}\|_{L_{p_1}}  
\langle  j+k\rangle^{s_2} \|g_{j+k}\|_{L_{p_2}}   )^{q}\Big)^{1/q}  \\
			& & \qquad \qquad \qquad \times \quad  \Big( \sum_{\substack{j\in\Z, \\ 
|j+k|\leq |j|}} (\langle j \rangle^{s_0 - s_1} \, \langle j+k \rangle^{-s_2})^{q'}  \Big)^{\frac{1}{q'}} \, .
\end{eqnarray*}
{\em Substep 2.1.} 
Our assumptions $s_0 \le s_1$, $s_2 \ge 0$ and $s_1 + s_2 -s_0 >n/q'$ imply
\begin{equation*} 
\Big( \sum_{\substack{j\in\Z, \\ 
|j+k|\leq |j|}} \Big|  \langle j \rangle^{s_0 - s_1} \, \langle j+k \rangle^{-s_2} \Big|^{q'} \Big)^{\frac{1}{q'}} \le 
\Big( \sum_{m \in\Z}  \langle m \rangle^{(s_0 -s_1 -s_2) q'} \Big)^{\frac{1}{q'}} =: c_3 <\infty \, .
\end{equation*}
Inserting this in our previous estimate we obtain
\begin{eqnarray*}
\Big(\sum_{k \in \Z} S_{1,t,k}^q\Big)^{1/q} & \le  & c_3\,  \Bigg( \sum_{k\in\Z} 
\sum_{\substack{j\in\Z, \\ |j+k|\leq |j|}} \langle j \rangle^{s_1q}  \|f_{t-j}\|_{L^{p_1}}^q 
\langle j+k \rangle^{s_2q} \|g_{j+k}\|_{L^{p_2}}^q \Bigg)^{1/q}
\\
& \leq &  c_3 \, \Bigg( \sum_{j\in\Z} 
\langle j \rangle^{s_1q} \|f_{t-j}\|_{L^{p_1}}^q \sum_{k\in\Z} \langle j+k \rangle^{s_2q} 
\|g_{j+k}\|_{L^{p_2}}^q \Bigg)^{\frac{1}{q}} 
\, .
\end{eqnarray*}
Because of $1+|j|^{2} \le  1+ 8n + |j-t|^{2} $ we know
\begin{equation*} 
\max_{\substack{t \in \Z, \\ -3<t_i < 3, \\ i=1,\ldots, n}} \sup_{j\in\Z} 
\frac{\langle j \rangle^{s_1}}{\langle j-t \rangle^{s_1}}  \leq  (1+ 8n)^{s_1/2}  =: c_4 < \infty \, .
\end{equation*}
This implies 
\be\label{wsb}
\Big(\sum_{k \in \Z} S_{1,t,k}^q\Big)^{1/q} \le c_3 \, c_4 \, \|g\|_{{M}^{s_2}_{p_2,q}} \|f\|_{{M}^{s_1}_{p_1,q}},
\ee
where $c_3$, $c_4$ are independent of $f,g$ and $t$.
\\
{\em Substep 2.2.} Because of  $0 \le s_0 \le s_1$,   $s_0 \le s_2$ and $s_1 + s_2 -s_0 >n/q'$ we conclude 
\begin{equation*} 
\Big( \sum_{\substack{l\in\Z, \\ 
|l-k|\leq |l|}} \Big|  \langle k-l \rangle^{- s_1} \, \langle l \rangle^{s_0-s_2} \Big|^{q'} \Big)^{\frac{1}{q'}} \le 
\Big( \sum_{m \in\Z}  \langle m \rangle^{(s_0 -s_1 -s_2) q'} \Big)^{\frac{1}{q'}} =: c_5 <\infty \, .
\end{equation*}
This leads to  the estimate
\be\label{ws}
\Big(\sum_{k \in \Z} S_{2,t,k}^q\Big)^{1/q} \le c_5 c_6\,  \|g\|_{{M}^{s_2}_{p_2,q}} \|f\|_{{M}^{s_1}_{p_1,q}} 
\ee
with some constants $c_6$ independent from $f$ and $g$.
Combining the inequalities \eqref{wsb} and \eqref{ws}  we have proved the claim.
\end{proof}

\begin{rem}
 \rm
 Some basic ideas of the above proof are taken over from  \cite{brs}, see also \cite{rrs}.
\end{rem}

Of course the above method of proof works as well for $q=1$.
But all spaces $M^s_{p,1}$, $s\ge 0$, are algebras.

\begin{thm} \label{algebra12}
Let $1\leq p, p_1,p_2\leq \infty$ and $s_0, s_1, s_2 \in \re$.  
 Let $1/p \le (1/p_1) + (1/p_2)$,  $1 < q \le \infty$,   $0 \le s_0  \le \min(s_1, s_2)$ and $s_1 + s_2-s_0>n/q'$. 
There exists a constant $c$ such that 
\[ 
\|\, f \cdot g \, \|_{{M}_{p,q}^{s_0}} \leq c \, \| f \|_{{M}_{p_1,q}^{s_1}}\,  \| g\|_{{M}_{p_2,q}^{s_2}} 
\]
holds for all $f\in {M}_{p_1,q}^{s_1}$ and all $g \in {M}_{p_2,q}^{s_2}$.
\end{thm}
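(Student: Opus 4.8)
The plan is to deduce Theorem \ref{algebra12} from Lemma \ref{algebra10} by two reductions: relaxing the exponent relation $1/p = 1/p_1 + 1/p_2$ to the inequality $1/p \le 1/p_1 + 1/p_2$, and removing the hypothesis that $\supp \cf f$ and $\supp \cf g$ be compact. First I would dispose of the index $p$. Setting $1/\tilde p := 1/p_1 + 1/p_2$, the assumption $1/p \le 1/\tilde p$ means $\tilde p \le p$, so Corollary \ref{einbettung} furnishes the continuous embedding $M^{s_0}_{\tilde p,q} \hookrightarrow M^{s_0}_{p,q}$. Hence it suffices to prove the estimate with $p$ replaced by $\tilde p$, i.e.\ precisely in the equality case covered by Lemma \ref{algebra10}. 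From here on I assume $1/\tilde p = 1/p_1 + 1/p_2$.

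Next, for general $f \in M^{s_1}_{p_1,q}$ and $g \in M^{s_2}_{p_2,q}$ I would approximate by the low-pass truncations $S^j f$ and $S^j g$, which are entire of exponential type and in particular have compact Fourier support. Thus Lemma \ref{algebra10} applies to each product $S^j f \cdot S^j g$, and since the constant there is independent of the supports, combining it with the uniform multiplier bound \eqref{ws-30} (applied to the indices $(s_1,p_1,q)$ and $(s_2,p_2,q)$) gives $\sup_j \|S^j f \cdot S^j g\|_{M^{s_0}_{\tilde p,q}} \le c\, \sup_j \|S^j f\|_{M^{s_1}_{p_1,q}}\,\|S^j g\|_{M^{s_2}_{p_2,q}} \le c'\,\|f\|_{M^{s_1}_{p_1,q}}\,\|g\|_{M^{s_2}_{p_2,q}}$ with $c'$ independent of $j$. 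It then remains to show that $(S^j f \cdot S^j g)_j$ converges in $\cs'$: its limit is by definition the product $f \cdot g$, and the Fatou property of Lemma \ref{basic}(iv) upgrades the uniform bound into both the membership $f\cdot g \in M^{s_0}_{\tilde p,q}$ and the claimed norm inequality.

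For $1 < q < \infty$ the convergence is routine: each $\Box_k$-block of $S^j f$ stabilizes once $j$ is large (and is dominated by a fixed summable sequence), so dominated convergence yields $\|S^j f - f\|_{M^{s_1}_{p_1,q}} \to 0$ and likewise for $g$. Writing the telescoping difference $S^k f\cdot S^k g - S^j f\cdot S^j g = (S^k f - S^j f)\cdot S^k g + S^j f\cdot(S^k g - S^j g)$, whose factors still have compact Fourier support, and applying Lemma \ref{algebra10} to each summand shows that $(S^j f\cdot S^j g)_j$ is Cauchy in $M^{s_0}_{\tilde p,q}$, hence convergent in $\cs'$ by Lemma \ref{basic}(iii).

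The main obstacle is the case $q=\infty$, where Schwartz functions are not dense and the norm convergence $\|S^j f - f\|_{M^{s_1}_{p_1,\infty}} \to 0$ breaks down, so only weak convergence can be expected. Here I would reproduce the duality argument of Theorem \ref{mult8} (Steps 2--3): testing the difference $h_1 := S^k f\cdot S^k g - S^j f\cdot S^j g$ against an arbitrary $\varphi \in \cs$, one inserts a dyadic cut-off $\psi^*$ equal to $1$ on the compact Fourier support of $h_1$, so that $\langle h_1,\varphi\rangle = \langle h_1, h_2\rangle$ with $h_2 = \cfi[(\psi^*(2^{-k}\cdot) - \psi^*(2^{-j}\cdot))\,\cf\varphi]$ supported at high frequencies. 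Distributing the pairing over the $\Box$-blocks and applying Hölder as in \eqref{ws-34} bounds $|\langle h_1,h_2\rangle|$ by $c\,\|h_1\|_{M^{s_0}_{\tilde p,\infty}}\,\|h_2\|_{M^{-s_0}_{\tilde p\,',1}}$; the first factor is uniformly controlled by the previous step, while $\|h_2\|_{M^{-s_0}_{\tilde p\,',1}} \to 0$ as $j \to \infty$ since $\varphi \in \cs \subset M^{-s_0}_{\tilde p\,',1}$ and the cut-off difference retains only the rapidly decaying high-frequency tail. This establishes that $(S^j f\cdot S^j g)_j$ is weakly Cauchy in $\cs'$, and a concluding application of Fatou finishes the proof in this case as well.
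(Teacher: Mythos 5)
Your proposal is correct and follows essentially the same route as the paper: reduce to the equality case $1/p=1/p_1+1/p_2$ via Corollary \ref{einbettung}, apply Lemma \ref{algebra10} to the truncations $S^jf\cdot S^jg$ together with \eqref{ws-30} for a uniform bound, establish convergence in $\cs'$ by the duality argument of Theorem \ref{mult8} (Steps 2--3), and conclude with the Fatou property. The only cosmetic difference is that you treat $1<q<\infty$ by norm convergence and reserve the duality argument for $q=\infty$, whereas the paper invokes the weak-convergence argument for all $1<q\le\infty$; both are valid.
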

	
\begin{proof}
We only comment on the case $1/p = (1/p_1) + (1/p_2)$, see Corollary \ref{einbettung}. 
It will be enough to prove the weak  convergence of $(S^kf \cdot S^k g)_k$ in $\cs'$. The claimed estimate will then follow 
from Lemma \ref{algebra10}. We employ the method and the notation  used in proof of Thm. \ref{mult8} (Steps 2 and 3). There we have proved
\[
\Big| \Big\langle S^k f \, S^k g  -  S^j f \, S^j g , \varphi \Big\rangle\Big|  
\le    c_1 \,  \|\,  h_1\, \|_{M^{s_0}_{p,q}} \, \| \,  h_2 \, \|_{M^{-s_0}_{1,1}}
\]
with $c_1$ independent of $f,g,k$ and $j$.
By means of Lemma \ref{algebra10} we know the uniform boundedness of 
$\|\,  h_1\, \|_{M^{s_0}_{p,q}}$ in $k$ and $j$.
The estimate of $\| \,  h_2 \, \|_{M^{-s_0}_{1,1}}$ can be done as above. It follows
\[
 \| \,  h_2 \, \|_{M^{-s_0}_{1,1}} \le \varepsilon
\]
if $j,k\ge j_0(\varepsilon)$.
This guarantees the  weak convergence of $(S^kf \cdot S^k g)_k$ in $\cs'$. 
\end{proof}

Our sufficient conditions are not far away from necessary conditions.

\begin{lem}
Let $1\leq p_1,p_2, p, q\leq \infty$ and $s_0, s_1, s_2 \in \re$.  
Suppose that there exists a constant $c$ such that
\be\label{ws-311}
\|\, f \cdot g \, \|_{{M}_{p,q}^{s_0}} \leq c \, \| f \|_{{M}_{p_1,q}^{s_1}}\,  \| g\|_{{M}_{p_2,q}^{s_2}} 
\ee
holds for all $f, g \in \cs$.
\\
{\rm (i)}  It follows  $s_0 \le \min (s_1,s_2)$, $s_1 + s_ 2 \ge 0$ and $s_1 + s_2 -s_0 \ge n/q'$.
\\
{\rm (ii)} If $1 \le p_2 =p < \infty$ and $1\le q < \infty$, then 
either $q=1$ and $s_1 \ge 0 $ or $1 < q < \infty $ and $s_1 >n/q'$.
\end{lem}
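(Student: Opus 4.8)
The plan is to prove every assertion by exhibiting explicit test functions of the kind used throughout the paper. Fix a real $\psi\in\cs$ with $\psi(0)=1$ and $\cf\psi$ supported in a cube $\{\|\xi\|_\infty<\varepsilon\}$, $\varepsilon<1/4$. First I would record, via Proposition~\ref{defdecomp}, the two norm facts that do all the work: since distinct integer frequencies are separated in the frequency-uniform decomposition, one has $\|\psi\, e^{ik\cdot x}\|_{M^{s}_{p,q}}\asymp\langle k\rangle^{s}$ and, more generally, $\big\|\psi\sum_k a_k e^{ik\cdot x}\big\|_{M^{s}_{p,q}}\asymp\|\psi\|_{L_p}\,\|(a_k\langle k\rangle^{s})\|_{\ell_q}$. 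Everything below is an application of these identities to \eqref{ws-311}.

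For part (i) I would use three families. To force $s_0\le\min(s_1,s_2)$ take $f=\psi\,e^{ik\cdot x}$ and $g=\psi$, so that $fg=\psi^2 e^{ik\cdot x}$ and \eqref{ws-311} reads $\langle k\rangle^{s_0}\ls\langle k\rangle^{s_1}$; letting $\|k\|_\infty\to\infty$ gives $s_0\le s_1$, and exchanging $f,g$ gives $s_0\le s_2$. To force $s_1+s_2\ge 0$ take $f=\psi\,e^{ik\cdot x}$ and $g=\psi\,e^{-ik\cdot x}$, so that $fg=\psi^2$ has a modulation norm independent of $k$ while the right-hand side of \eqref{ws-311} is $\asymp\langle k\rangle^{s_1+s_2}$, whence $s_1+s_2\ge0$.

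The decisive inequality is $s_1+s_2-s_0\ge n/q'$. Here I take $f=\psi\sum_{\|k\|_\infty\le N}e^{ik\cdot x}$ and $g=\psi\sum_{\|l\|_\infty\le N}e^{il\cdot x}$, so that $\|f\|_{M^{s_1}_{p_1,q}}\asymp N^{s_1+n/q}$ and $\|g\|_{M^{s_2}_{p_2,q}}\asymp N^{s_2+n/q}$. The point is to bound $\|fg\|_{M^{s_0}_{p,q}}$ from below over the \emph{whole} output cube: writing $fg=\psi^2\sum_m c_m e^{im\cdot x}$ with $c_m=\#\{(k,l):k+l=m\}$, one has $c_m\gtrsim N^{n}$ for every $\|m\|_\infty\le N$, hence
\[
\|fg\|_{M^{s_0}_{p,q}}\gtrsim N^{n}\Big(\sum_{\|m\|_\infty\le N}\langle m\rangle^{s_0 q}\Big)^{1/q}\asymp N^{\,n+s_0+n/q}.
\]
Inserting this into \eqref{ws-311} gives $N^{\,n+s_0+n/q}\ls N^{\,s_1+s_2+2n/q}$, i.e. $s_1+s_2-s_0\ge n-n/q=n/q'$. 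I expect this to be the main obstacle of part (i): one must sum the weight $\langle m\rangle^{s_0}$ across the full cube rather than read off a single block, since the single-block bound $c_0\gtrsim N^n$ only yields $s_1+s_2\ge n(1-2/q)$, which both loses the term $s_0$ and misses the sharp constant.

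For part (ii) the estimate \eqref{ws-311} is symmetric in its two factors, so I would reduce matters to the factor whose integrability index coincides with the target index $p$ and show that its modulation space must embed into $L_\infty$; by Corollary~\ref{einbettung2} this embedding is precisely the dichotomy in the statement (here, $q=1,\ s_1\ge0$ or $1<q<\infty,\ s_1>n/q'$). I argue by contraposition: if the embedding fails, Corollary~\ref{einbettung2} supplies a borderline extremal $f$ with $\|f\|_{M^{s_1}_{p,q}}<\infty$ but $f$ unbounded, its large values concentrating near one point $x_0$, and against it I test with a localized bump $g$ placed at $x_0$ at a matching spatial scale. Because the singular factor carries the index $p$, the $L_p$-scales of $\|g\|$ and of $\|fg\|$ are tied, and a direct computation makes the quotient $\|fg\|_{M^{s_0}_{p,q}}/(\|f\|_{M^{s_1}_{p,q}}\|g\|_{M^{s_2}_{p_2,q}})$ diverge, contradicting \eqref{ws-311}. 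The delicate ingredient, and the main obstacle of part (ii), is the lower bound for $\|fg\|_{M^{s_0}_{p,q}}$: modulation norms do not detect pointwise values directly, so one must quantify the concentration of the extremal $f$ and tune the scale of $g$ so that the product's mass is captured by finitely many decomposition blocks — exactly the step where the coincidence of the singular factor's index with $p$ is essential.
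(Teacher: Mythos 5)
Steps 1 and 2 of your part (i) coincide with the paper's proof. The gap is in your third test, which must deliver $s_1+s_2-s_0\ge n/q'$ for \emph{all} $s_0,s_1,s_2\in\re$: with the choice $a_k=b_k=1$ the asymptotics $\|f\|_{M^{s_1}_{p_1,q}}\asymp N^{s_1+n/q}$, $\|g\|_{M^{s_2}_{p_2,q}}\asymp N^{s_2+n/q}$ and $\|fg\|_{M^{s_0}_{p,q}}\gtrsim N^{\,n+s_0+n/q}$ are valid only when $s_1+n/q$, $s_2+n/q$ and $s_0+n/q$ are positive; otherwise the corresponding $\ell_q$-sums converge (or grow logarithmically) and the norms are $O(1)$. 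Step 2 only gives $s_1+s_2\ge 0$, so e.g. $s_2<-n/q$ with $s_1$ large is not excluded; there $\|g\|_{M^{s_2}_{p_2,q}}\asymp 1$ and your computation yields only $s_0\le s_1-n$, which in that regime is strictly weaker than $s_0\le s_1+s_2-n/q'$. The paper repairs exactly this by taking $a_k=\langle k\rangle^{\varepsilon_1}$, $b_k=\langle k\rangle^{\varepsilon_2}$ with $\varepsilon_1,\varepsilon_2\ge 0$ so large that $s_1+\varepsilon_1+n/q>0$, $s_2+\varepsilon_2+n/q>0$ and $s_0+\varepsilon_1+\varepsilon_2+n>0$; the parameters cancel from the final exponent inequality and the same conclusion $s_1+s_2-s_0\ge n-n/q=n/q'$ drops out. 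Your argument needs the same device.

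Part (ii) is where the proposal genuinely breaks down. First, the index bookkeeping is off: the dichotomy concerns $s_1$, i.e.\ the factor $f\in M^{s_1}_{p_1,q}$, whose integrability index $p_1$ need \emph{not} equal $p$; the role of the hypothesis $p_2=p$ is that the \emph{other} factor $g$ and the product $fg$ share the Lebesgue index $p$, so that multiplication by $f$ acts as a map $M^{s_2}_{p,q}\to M^{s_0}_{p,q}$, and one must show this forces $\|f\|_{L_\infty}\ls \|f\|_{M^{s_1}_{p_1,q}}$. Your sketch instead attaches the index $p$ to the singular factor. Second, the step you yourself flag as delicate --- a lower bound for $\|fg\|_{M^{s_0}_{p,q}}$ detecting the pointwise blow-up of an unbounded extremal --- is precisely the step you do not supply, and it is unclear it can be carried out by tuning a bump: the extremals of Corollary \ref{einbettung2} blow up only logarithmically, and modulation norms do not control pointwise values, which is why the paper does not attempt any such construction. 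Instead it invokes Lemma \ref{notw}: by duality the boundedness of $T_f$ on $\mathring{M}^{s_2}_{p,q}\to M^{s_0}_{p,q}$ transfers to the dual scale, complex interpolation lands in $\cl(L_2,L_2)$, hence $\|f\|_{L_\infty}\le c'\,\|f\|_{M^{s_1}_{p_1,q}}$ for all $f\in\cs$, and the necessity part of Corollary \ref{einbettung2} then yields exactly the stated dichotomy for $s_1$ and $q$. You should either adopt that soft duality--interpolation argument or provide a concrete quantitative mechanism for your lower bound; as written, part (ii) is not proved.
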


\begin{proof}
Part (ii) is an immediate consequence of Lemma \ref{notw}.
Concerning the proof of (i) we shall work with the same test functions as  used in Step 2 of the proof of 
Corollary \ref{einbettung2}, see \eqref{ws-10b}.
\\
{\em Step 1.} We choose $a_k := \delta_{k,\ell}$, $k \in \Z$,  for a fixed given $\ell \in \Z$ and put $b_k:= \delta_{k,0}$, $k \in \Z$. 
Then we define
\[
 f(x) := \psi (x) \, e^{i \ell x} \qquad \mbox{and} \qquad g(x):= \psi (x)\, . 
\]
We obtain
\[
\|f\|_{M^{s_1}_{p_1,q}} \, \cdot \, \|g\|_{M^{s_2}_{p_2,q}} = \|\psi\|_{L_{p_1}}\, \|\psi\|_{L_{p_2}} \, \langle \ell\rangle^{s_1} 
\]
as well as 
\[
 \|f\, \cdot \, g \|_{M^{s_0}_{p,q}} = \|\psi^2\|_{L_{p}} \, \langle \ell\rangle^{s_0} \, .
\]
Hence, \eqref{ws-311} implies $s_0 \le s_1$. Interchanging the roles of $f$ and $q$ leads to the conclusion $s_0 \le s_2$.
\\
{\em Step 2.} Let $\ell \in \Z$ be fixed.
 We choose $a_k := \delta_{k,\ell}$, $k \in \Z$, and  $b_k:= \delta_{k,-\ell}$, $k \in \Z$. 
Then we define
\[
 f(x) := \psi (x) \, e^{i \ell x} \qquad \mbox{and} \qquad g(x):= \psi (x)\,  e^{-i \ell x} \, . 
\]
It follows
\[
\|f\|_{M^{s_1}_{p_1,q}} \, \cdot \, \|g\|_{M^{s_2}_{p_2,q}} = \|\psi\|_{L_{p_1}}\, \|\psi\|_{L_{p_2}} \, \langle \ell\rangle^{s_1+s_2} 
\]
as well as 
\[
 \|f\, \cdot \, g \|_{M^{s_0}_{p,q}} = \|\psi^2\|_{L_{p}}  \, .
\]
Hence, \eqref{ws-311} implies $s_1+ s_2\ge 0$. 
\\
{\em Step 3.} Let $\varepsilon_1, \varepsilon_2 \ge 0$.
These two numbers will be chosen such that
\[
\min(s_1 + \varepsilon_1 +  n/q, s_2  + \varepsilon_2+ n/q) >0
\qquad \mbox{and}\qquad s_0 + \varepsilon_2 + \varepsilon_1 + n>0\, .
\]
We choose $a_k := \langle k \rangle^{\varepsilon_1}$, $k \in \Z$,   and $b_k:= \langle k \rangle^{\varepsilon_2}$, $k \in \Z$. 
Then we define
\[
 f(x) := \psi (x) \, \sum_{\|k\|_\infty \le N} a_k \, e^{i k x} \qquad \mbox{and} \qquad g(x):= \psi (x) \, \sum_{\|k\|_\infty \le N} b_k \, e^{i k x} \, . 
\]
By means of the same arguments as used in Substep 3.1 of the proof of Theorem \ref{mult2}, we conclude
\[
 \|f\|_{M^{s_1}_{p_1,q}}  \asymp  N^{s_1 + \varepsilon_1 + n/q} \qquad \mbox{and}\qquad 
  \, \|g\|_{M^{s_2}_{p_2,q}}  \asymp  N^{s_2 + \varepsilon_2 + n/q} \, .
\]
In addition we have 
\beqq
\| f \, \cdot \, g \|_{M^{s_0}_{p,q}} &\asymp  &
\Bigg(
\sum_{\|m\|_\infty \le 2N} \langle m \rangle^{s_0q}\,  \Big|\sum_{{k: ~ \|k\|_\infty\le N \atop  \| m -k\|_\infty  \le N}} 
a_k \, b_{m-k} \Big|^q\Bigg)^{1/q}
\\
&\ge  & \frac{1}{2n}\, 
\Bigg(\sum_{\|m\|_\infty \le N} \langle m \rangle^{(s_0 + \varepsilon_2)q}\,  \Big|\sum_{{k: ~ \|k\|_\infty\le \|m\|_\infty/2}} 
 \langle k \rangle^{\varepsilon_1}  \Big|^q\Bigg)^{1/q}
 \\
&\ge  & C_1 \, 
\Bigg(\sum_{\|m\|_\infty \le N} \langle m \rangle^{(s_0 + \varepsilon_2 + \varepsilon_1 + n)q}\,  \Bigg)^{1/q} 
\\
&\ge  & C_2 \, N^{s_0 + \varepsilon_2 + \varepsilon_1 + n + n/q}
\eeqq
for some $C_1,C_2$ independent of $N$, see Substep 3.2 of the proof of Theorem \ref{mult2}.
The inequality \eqref{ws-311} yields 
\[
s_0 + \varepsilon_2 + \varepsilon_1 + n + n/q \le s_1 + \varepsilon_1 + n/q + s_2 + \varepsilon_2 + n/q
\]
which proves the claim.
\end{proof}

The duality argument used in the proof of Lemma \ref{notw} allows to treat the case $s_0 < 0$.

\begin{thm}\label{algebra20}
Let $1\leq p, p_1,p_2\leq \infty$ and $s_0, s_1, s_2 \in \re$.  
 Let $1/p \le (1/p_1) + (1/p_2)$,  $1 \le  q < \infty$,   $s_0 \le s_2 \le 0$, $0 \le s_1 + s_2$  and $s_1 + s_2-s_0>n/q$. 
There exists a constant $c$ such that 
\[ 
\|\, f \cdot g \, \|_{{M}_{p,q}^{s_0}} \leq c \, \| f \|_{{M}_{p_1,q'}^{s_1}}\,  \| g\|_{{M}_{p_2,q}^{s_2}} 
\]
holds for all $f\in {M}_{p_1,q'}^{s_1}$ and all $g \in {M}_{p_2,q}^{s_2}$.
\end{thm}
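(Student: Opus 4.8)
The plan is to deduce the bilinear estimate from Theorem \ref{algebra12} by a duality argument, exactly in the spirit of the proof of Lemma \ref{notw}. As in the proof of Theorem \ref{algebra12}, I first reduce by Corollary \ref{einbettung} to the case $1/p = (1/p_1)+(1/p_2)$. The heart of the matter is an estimate for $f,g$ with compact Fourier support, so that $f\cdot g = \sum_{k,l} f_k g_l$ is a finite sum. Since $f_k \in L_{p_1}$, $g_l \in L_{p_2}$ and $1/p = 1/p_1+1/p_2$, H\"older's inequality gives $f\cdot g \in L_p$; being band-limited, each of its finitely many frequency-uniform pieces lies in $L_p$ as well, so $f\cdot g \in M^{s_0}_{p,q}$ with a finite (not yet controlled) norm. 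I then invoke the equivalent dual norm from Lemma \ref{notw} (valid for all $1\le p,q\le\infty$),
\[
\|\, f\cdot g\, \|_{M^{s_0}_{p,q}} \asymp \sup\big\{ |(f\cdot g, h)| :\ h\in\cs,\ \|h\|_{M^{-s_0}_{p',q'}}\le 1\big\}\, .
\]

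For $h\in\cs$ the function $f\cdot h$ again belongs to $\cs$, and the pairing rearranges as $(f\cdot g, h) = (g, f\cdot h)$, whence
\[
|(f\cdot g, h)| \le \|g\|_{M^{s_2}_{p_2,q}}\, \|f\cdot h\|_{M^{-s_2}_{p_2',q'}}\, .
\]
It remains to control $\|f\cdot h\|_{M^{-s_2}_{p_2',q'}}$, and this is where Theorem \ref{algebra12} enters, applied with the exponent there replaced by $q'$ (permissible, since $1\le q<\infty$ forces $1<q'\le\infty$), with target data $(\tilde s_0,\tilde p) = (-s_2, p_2')$ and factor data $(\tilde s_1,\tilde p_1)=(s_1,p_1)$, $(\tilde s_2,\tilde p_2)=(-s_0,p')$.

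The hypotheses of Theorem \ref{algebra12} translate precisely into those of the present theorem. Indeed $1/p_2'\le 1/p_1+1/p'$ is equivalent to $1/p\le 1/p_1+1/p_2$; the chain $0\le -s_2\le\min(s_1,-s_0)$ amounts to $s_2\le 0$, $s_1+s_2\ge 0$ and $s_0\le s_2$; and, since $\tilde q'=q$, the condition $\tilde s_1+\tilde s_2-\tilde s_0>n/\tilde q'$ reads $s_1+s_2-s_0 > n/q$, which is the final assumption. This yields $\|f\cdot h\|_{M^{-s_2}_{p_2',q'}}\le c\,\|f\|_{M^{s_1}_{p_1,q'}}\|h\|_{M^{-s_0}_{p',q'}}$, and taking the supremum over $h$ gives the asserted estimate for all $f,g$ with compact Fourier support, with constant independent of the supports. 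The passage to general $f\in M^{s_1}_{p_1,q'}$ and $g\in M^{s_2}_{p_2,q}$ then proceeds as in the proofs of Theorems \ref{mult8} and \ref{algebra12}: applying the compact-support estimate to $S^jf,S^jg$ together with the uniform bound \eqref{ws-30} gives $\sup_j\|S^jf\cdot S^jg\|_{M^{s_0}_{p,q}}\le c\|f\|_{M^{s_1}_{p_1,q'}}\|g\|_{M^{s_2}_{p_2,q}}$, so by the Fatou property (Lemma \ref{basic}(iv)) it suffices to establish the weak convergence of $(S^jf\cdot S^jg)_j$ in $\cs'$.

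I expect the main obstacle to lie in this last convergence step together with the index bookkeeping. Since $q<\infty$ one has $S^jg\to g$ in $M^{s_2}_{p_2,q}$, which disposes of the $g$-difference term by a straightforward Cauchy estimate; but when $q=1$, so that $q'=\infty$, the regularizations $S^jf$ no longer converge in norm, and the $f$-difference term cannot be handled this way. Here one must reuse the high-frequency localization argument of Theorem \ref{mult8} (Steps~2 and~3), writing the difference against a test function and shifting it onto the high-frequency part of that test function, whose norm tends to zero. Verifying that the dictionary $q\leftrightarrow q'$ sends $n/q'$ in Theorem \ref{algebra12} to the required $n/q$, and that precisely the sign constraints $s_0\le s_2\le 0$ and $s_1+s_2\ge 0$ produce $0\le -s_2\le\min(s_1,-s_0)$, is the other place where care is needed.
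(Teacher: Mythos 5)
Your proposal is correct and is exactly the argument the paper has in mind: the paper gives no detailed proof of Theorem \ref{algebra20}, only the one-line remark that ``the duality argument used in the proof of Lemma \ref{notw} allows to treat the case $s_0<0$,'' and your write-up is a faithful elaboration of that plan, with the index bookkeeping ($\tilde q'=q$, $0\le -s_2\le\min(s_1,-s_0)$, $1/p_2'\le 1/p_1+1/p'$) checking out and the limiting/weak-convergence step handled as in Theorems \ref{mult8} and \ref{algebra12}.
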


\begin{rem}
 \rm
Theorem \ref{algebra20} and Theorem \ref{mult8} have some overlap.
\end{rem}

\subsection*{Some further remarks to the literature}

Here we  recall  results of Iwabuchi \cite{iwabuchi} and Toft et all \cite{toftAlg}.
As Cordero and Nicola \cite{cordero} also Iwabuchi considered
the more general situation 
${M}_{p_1,q_1}^{s_1} \, \cdot \, {M}_{p_2,q_2}^{s_2} \hookrightarrow M^{s_0}_{p,q}$.
This greater flexibility with respect to the tripel $q,q_1,q_2$ allows to treat cases not covered by Theorems \ref{algebra12}, \ref{algebra20}.

\begin{prop}(Iwabuchi \cite{iwabuchi})\\
 Let $1\leq p, p_1,p_2 \leq \infty$, $1<q,q_1,q_2< \infty$ and $0 < s_0 < n/q$.
\\
{\rm (i)} If $q\ge q_1$, 
\be\label{ws-310}
\frac 1p \le \frac{1}{p_1} +  \frac{1}{p_2} \qquad \mbox{and}\qquad  1 + \frac 1q - \Big(\frac{1}{q_1} +  \frac{1}{q_2}\Big) = \frac {s_0}{n}\, , 
\ee
then there exists a constant $c$ such that
\[
\|\, f \cdot g \, \|_{{M}_{p,q}^{-s_0}} \leq c \, \| f \|_{{M}_{p_1,q_1}^{0}}\,  \| g\|_{{M}_{p_2,q_2}^{0}} 
\]
holds for all $f \in {M}_{p_1,q_1}^{0}$ and all $g \in {M}_{p_2,q_2}^{0}$.
\\
{\rm (ii)} Assume $q\ge \max (q_1,q_2)$ and \eqref{ws-310}. Then there exists a constant $c$ such that
\[
\|\, f \cdot g \, \|_{{M}_{p,q}^{s_0}} \leq c \, \| f \|_{{M}_{p_1,q_1}^{s_0}}\,  \| g\|_{{M}_{p_2,q_2}^{s_0}} 
\]  
holds for all $f \in {M}_{p_1,q_1}^{s_0}$ and all $g \in {M}_{p_2,q_2}^{s_0}$.
\end{prop}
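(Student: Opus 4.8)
The plan is to carry out the frequency-uniform decomposition method of Lemma \ref{algebra10} and to reduce each part to a weighted discrete convolution inequality on $\Z$, which turns out to be of Hardy--Littlewood--Sobolev type. By Corollary \ref{einbettung} it suffices to treat the borderline case $1/p=(1/p_1)+(1/p_2)$. Exactly as in the proof of Theorem \ref{algebra12}, I would first prove the asserted bound for $f,g$ with compact Fourier support, so that all sums below are finite, and then pass to general $f,g$ by applying the estimate to the truncations $S^jf,S^jg$ and invoking the Fatou property of Lemma \ref{basic}(iv); the weak convergence of $(S^kf\cdot S^kg)_k$ in $\cs'$ is obtained verbatim as in Steps~2--3 of the proof of Theorem \ref{mult8}.

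Writing $f_j=\cfi[\sigma_j\,\cf f]$ and repeating the localization of Step~1 of Lemma \ref{algebra10} (the support condition forces $\|k-(j+l)\|_\infty\le 2$), followed by Young's inequality for the convolution with $\cfi\sigma_k$ and H\"older's inequality for the product, I obtain the pointwise bound $\|\Box_k(f\cdot g)\|_{L_p}\lesssim\sum_{l\in\Z}\|f_{k-l}\|_{L_{p_1}}\|g_l\|_{L_{p_2}}$, up to the harmless finite sum over the shift. Setting $a_j:=\|f_j\|_{L_{p_1}}$ and $b_l:=\|g_l\|_{L_{p_2}}$, part~(i) (where $s_1=s_2=0$ and the output carries the weight $\langle k\rangle^{-s_0}$) reduces to
\[
\big\|\langle\cdot\rangle^{-s_0}\,(a\ast b)\big\|_{\ell_q}\;\lesssim\;\|a\|_{\ell_{q_1}}\,\|b\|_{\ell_{q_2}}\,.
\]
For part~(ii) I would instead take $a_j:=\langle j\rangle^{s_0}\|f_j\|_{L_{p_1}}$, $b_l:=\langle l\rangle^{s_0}\|g_l\|_{L_{p_2}}$ and use the subadditivity $\langle k\rangle^{s_0}\lesssim\langle k-l\rangle^{s_0}+\langle l\rangle^{s_0}$ to split the weighted inner sum into $(a\ast(\langle\cdot\rangle^{-s_0}b))_k$ and $((\langle\cdot\rangle^{-s_0}a)\ast b)_k$; each is a weighted convolution of the same flavour, and the symmetric hypothesis $q\ge\max(q_1,q_2)$ is what lets me treat both pieces.

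The exponent bookkeeping is governed precisely by \eqref{ws-310}. Putting $1/r:=1/q-s_0/n$, the condition \eqref{ws-310} is exactly Young's relation $1+1/r=(1/q_1)+(1/q_2)$, and $0<s_0<n/q$ forces $q<r<\infty$, while $q\ge q_1$ (and, in part (ii), $q\ge q_2$) is the admissibility of the target index. The displayed inequality is scale-invariant: testing with $a=b=\mathbf 1_{\{\|k\|_\infty\le N\}}$ gives $N^{\,n+n/q-s_0}$ on both sides, because $s_0q<n$ makes $\sum_{\|k\|_\infty\le N}\langle k\rangle^{-s_0q}\asymp N^{\,n-s_0q}$. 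I would prove it by duality, pairing $\langle\cdot\rangle^{-s_0}(a\ast b)$ against $\phi\in\ell_{q'}$ and recognizing $\sum_{j,l}a_j\,b_l\,\langle j+l\rangle^{-s_0}\phi_{j+l}$ as a discrete Hardy--Littlewood--Sobolev form of fractional order $s_0$, whose scaling relation is exactly \eqref{ws-310}.

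The hard part is exactly this last inequality, and the obstacle is its endpoint character. The tempting route --- Young's inequality to land $a\ast b$ in $\ell_r$, followed by H\"older to extract the weight --- always forces the factor $\langle\cdot\rangle^{-s_0}\in\ell_{n/s_0}$, whose norm is the divergent sum $\sum_{k\in\Z}\langle k\rangle^{-n}$; one lands exactly on the borderline and loses a logarithm. In the resonant regime $\|l\|_\infty\asymp\|k-l\|_\infty\gtrsim\|k\|_\infty$ the weight $\langle k\rangle^{-s_0}$ cannot be split off as a convolution kernel in the favourable direction, so it must genuinely be treated as a fractional-integration kernel of order $s_0$. The assumption $s_0<n/q$, i.e.\ $s_0q<n$, is exactly what keeps this fractional integration subcritical and makes the Hardy--Littlewood--Sobolev estimate applicable; executing this endpoint argument with explicit constants, rather than the logarithmically failing H\"older split, is the one genuinely delicate step, the remaining bookkeeping being that already rehearsed in Lemma \ref{algebra10}.
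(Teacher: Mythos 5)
You should first be aware that the paper contains no proof of this statement: it is quoted from Iwabuchi \cite{iwabuchi} in a literature survey, so your attempt has to be measured against what a complete proof requires rather than against an argument in the text. Your outer layers are unobjectionable and match the paper's standard machinery: the reduction via Lemma \ref{algebra10} to $\|\Box_k(f\cdot g)\|_{L_p}\lesssim (a*b)_k$, the Fatou/weak-convergence limiting argument from Lemma \ref{basic} and Theorem \ref{mult8}, and the scaling check are all fine. The gap sits exactly in the step you call the hard part, and it is not merely a gap of execution. The dualized form $\sum_{j,l}a_j b_l\langle j+l\rangle^{-s_0}\phi_{j+l}$ is \emph{not} a discrete Hardy--Littlewood--Sobolev form: HLS is a bilinear form with a difference kernel $|j-l|^{-\lambda}$ between two unweighted sequences, whereas here the weight is attached to a third sequence at the output site $j+l$; ``recognizing'' the form as HLS is an assertion, not an argument. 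What actually closes part (i) is the Lorentz-scale endpoint calculus: $\langle\cdot\rangle^{-s_0}\in\ell_{n/s_0,\infty}(\Z)$, O'Neil's convolution inequality gives $\|a*b\|_{\ell_{r,q}}\lesssim\|a\|_{\ell_{q_1}}\|b\|_{\ell_{q_2}}$ with $1/r=1/q-s_0/n>0$ (exactly where $0<s_0<n/q$ enters, and where your plain Young/H\"older loses its logarithm), and H\"older in Lorentz spaces then yields $\|\langle\cdot\rangle^{-s_0}(a*b)\|_{\ell_q}\lesssim\|a\|_{\ell_{q_1}}\|b\|_{\ell_{q_2}}$. Tellingly, this argument never uses $q\ge q_1$; your outline likewise never identifies where that hypothesis acts, which is a warning sign.

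For part (ii) the plan breaks down structurally. Your split $\langle k\rangle^{s_0}\lesssim\langle k-l\rangle^{s_0}+\langle l\rangle^{s_0}$ discards the region restrictions $|l|\le|k-l|$, $|k-l|\le|l|$ and demands the \emph{global} bound $\|a*(\langle\cdot\rangle^{-s_0}b)\|_{\ell_q}\lesssim\|a\|_{\ell_{q_1}}\|b\|_{\ell_{q_2}}$. For a nonnegative kernel $h$, the norm of $a\mapsto a*h$ on $\ell_q$ equals $\|h\|_{\ell_1}$ (test on indicators of large boxes), and at $q=q_1$ the relation \eqref{ws-310} forces $s_0q_2'=n$, so $h=\langle\cdot\rangle^{-s_0}b$ with $b_l=\langle l\rangle^{-n/q_2}(\log(2+|l|))^{-\beta}$, $1/q_2<\beta<1$, has $b\in\ell_{q_2}$ but $h\notin\ell_1$: each of your two pieces is \emph{false} at the endpoint $q=\max(q_1,q_2)$, which the hypotheses admit. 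For $q>\max(q_1,q_2)$ strictly your split can be rescued by the same Lorentz tools (then $\langle\cdot\rangle^{-s_0}b\in\ell_{u,q_2}$ with $u>1$ and O'Neil applies), so that regime is salvageable. At equality, however, the obstruction is genuine: taking $f=\psi\sum_{\|j\|_\infty\le N}\langle j\rangle^{-s_0}e^{ij\cdot x}$ and $g=\psi\sum_{l}\langle l\rangle^{-n}(\log(2+|l|))^{-\beta}e^{il\cdot x}$, as in Substep 3.1 of the proof of Theorem \ref{mult2}, one finds for $q=q_1=q_2\in(1,2)$, $s_0=n/q'$, that $c_k=\sum_{j+l=k}\langle j\rangle^{-s_0}\mathbf{1}_{\|j\|_\infty\le N}\,\langle l\rangle^{-n}(\log(2+|l|))^{-\beta}\gtrsim N^{-s_0}(\log N)^{1-\beta}$ for $\|k\|_\infty\in[N/4,N/2]$, whence $\|f\cdot g\|_{M^{s_0}_{p,q}}\gtrsim N^{n/q}(\log N)^{1-\beta}$ while $\|f\|_{M^{s_0}_{p_1,q}}\|g\|_{M^{s_0}_{p_2,q}}\asymp N^{n/q}$ --- so inequality (ii) as displayed cannot hold in that corner, and Iwabuchi's precise hypotheses must be read more restrictively there. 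In any event, a correct proof at or near the endpoint has to keep the region restrictions and run a dyadic-shell analysis (H\"older within each shell $|l|\sim 2^j$ is uniform in $j$ exactly under \eqref{ws-310}, outputs are almost orthogonal across shells, and the shells are summed via $\ell_{q_1}\hookrightarrow\ell_q$, $\ell_{q_2}\hookrightarrow\ell_q$); that is where $q\ge q_1$ and $q\ge\max(q_1,q_2)$ are finally spent, and a plan that erases the restrictions before estimating cannot reach the stated range.
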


\begin{rem}
 \rm
 Let us take $q=q_1 = q_2$. Then \eqref{ws-310} reads as $s_0=n/q'$. In combination with $0 <s_0<n/q$ this yields  $1< q< 2$.
Hence,  (i) reads as 
\[
\|\, f \cdot g \, \|_{{M}_{p,q}^{-n/q'}} \leq c \, \| f \|_{{M}_{p_1,q}^{0}}\,  \| g\|_{{M}_{p_2,q}^{0}} \, , 
\]
whereas (ii) gives
\[
\|\, f \cdot g \, \|_{{M}_{p,q}^{n/q'}} \leq c \, \| f \|_{{M}_{p_1,q_1}^{n/q'}}\,  \| g\|_{{M}_{p_2,q_2}^{n/q'}} \, .
\] 
\end{rem}

Toft et all \cite{toftAlg} also consider
the situation 
${M}_{p_1,q_1}^{s_1} \, \cdot \, {M}_{p_2,q_2}^{s_2} \hookrightarrow M^{s_0}_{p,q}$. Recall, $\mathring{M}^{s_0}_{p,q}$ denotes the closure of $\cs$ in 
$M^{s_0}_{p,q}$.

\begin{prop}\label{toft} (Toft et all \cite{toftAlg})
\\
Let $1\leq p,p_1,p_2,q,q_1,q_2\leq \infty$ and $s_0,s_1,s_2 \in \re$.
\\
{\rm (i)}
We suppose
\begin{itemize}
 \item[(a)] $1+ \frac 1p - \frac{1}{p_1} -  \frac{1}{p_2}\le 1 $;
 \item[(b)] $0 \le 1+ \frac 1q - \frac{1}{q_1} -  \frac{1}{q_2}\le 1/2$;
 \item[(c)] $s_0 \le \min (s_1,s_2)$;
 \item[(d)] $s_1 + s_2 \ge 0$;
 \item[(e)] $s_1 + s_2 -s_0 - n \, \Big(1+ \frac 1q - \frac{1}{q_1} -  \frac{1}{q_2}\Big) \ge 0$;
 \item[(f)] $s_1 + s_2 -s_0 - n \, \Big(1+ \frac 1q - \frac{1}{q_1} -  \frac{1}{q_2}\Big) > 0$ if 
$1+ \frac 1q - \frac{1}{q_1} -  \frac{1}{q_2}>0$ and either $s_1$ or $s_2$ or $-s_0$ equals 
$n \,\Big( 1+ \frac 1q - \frac{1}{q_1} -  \frac{1}{q_2}\Big)$.
\end{itemize}
Then  there exists a constant $c$ such that
\be\label{ws-320}
\|\, f \cdot g \, \|_{{M}_{p,q}^{s_0}} \leq c \, \| f \|_{{M}_{p_1,q_1}^{s_1}}\,  \| g\|_{{M}_{p_2,q_2}^{s_2}} 
\ee
holds for all $f \in \mathring{M}_{p_1,q_1}^{s_1}$ and all $g \in \mathring{M}_{p_2,q_2}^{s_2}$.
\\
{\rm (ii)} If \eqref{ws-320} holds for all $f,g \in \cs$, then (c), (d) and (e) follow.
\end{prop}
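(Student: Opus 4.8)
The plan is to reduce the assertion to Theorem~\ref{algebra12} by the same duality device used in the proof of Lemma~\ref{notw}; this is exactly what lets us descend to negative smoothness $s_0\le 0$. Recall from there that, for the pairing $\langle\cdot,\cdot\rangle$ on $\cs'\times\cs$, the quantity $\sup\{|\langle u,h\rangle|:h\in\cs,\ \|h\|_{M^{-s_0}_{p',q'}}\le 1\}$ is an equivalent norm on $M^{s_0}_{p,q}$. First I would treat the case where $f$ and $g$ have compact Fourier support, so that $f\cdot g$ is an ordinary smooth function of polynomial growth and, for $h\in\cs$, Fubini's theorem gives the symmetry $\langle f\cdot g,h\rangle=\langle g,f\cdot h\rangle$ with $f\cdot h\in\cs$. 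Estimating by the duality between $M^{s_2}_{p_2,q}$ and $M^{-s_2}_{p_2',q'}$,
\[
|\langle f\cdot g,h\rangle|=|\langle g,f\cdot h\rangle|\le \|g\|_{M^{s_2}_{p_2,q}}\,\|f\cdot h\|_{M^{-s_2}_{p_2',q'}}\, ,
\]
so that everything is shifted onto a bound for $\|f\cdot h\|_{M^{-s_2}_{p_2',q'}}$ in terms of $\|f\|_{M^{s_1}_{p_1,q'}}$ and $\|h\|_{M^{-s_0}_{p',q'}}$.

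The second step is to recognise this last bound as a direct instance of Theorem~\ref{algebra12}. Reading that theorem with target space $M^{-s_2}_{p_2',q'}$ and factors $M^{s_1}_{p_1,q'}$, $M^{-s_0}_{p',q'}$ amounts to choosing its parameters as $\tilde q=q'$, $\tilde s_0=-s_2$, $(\tilde s_1,\tilde s_2)=(s_1,-s_0)$ and $(\tilde p_1,\tilde p_2,\tilde p)=(p_1,p',p_2')$. Under this dictionary the requirement $1<\tilde q\le\infty$ becomes $1\le q<\infty$; the chain $0\le\tilde s_0\le\min(\tilde s_1,\tilde s_2)$ unfolds into $s_2\le 0$, $s_1+s_2\ge 0$ and $s_0\le s_2$; the spectral gap $\tilde s_1+\tilde s_2-\tilde s_0>n/\tilde q'$ reads, since $\tilde q'=q$, as $s_1+s_2-s_0>n/q$; and $1/\tilde p\le 1/\tilde p_1+1/\tilde p_2$ is equivalent to $1/p\le 1/p_1+1/p_2$. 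Thus every hypothesis of the present theorem translates one-to-one into a hypothesis of Theorem~\ref{algebra12}, which therefore supplies the missing estimate. Taking the supremum over admissible $h\in\cs$ then yields $\|f\cdot g\|_{M^{s_0}_{p,q}}\lesssim \|f\|_{M^{s_1}_{p_1,q'}}\,\|g\|_{M^{s_2}_{p_2,q}}$ for band-limited $f,g$.

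Finally I would remove the band-limitation exactly as in the proofs of Theorem~\ref{mult8} and Theorem~\ref{algebra12}. Applying the band-limited estimate just obtained to the differences $S^kf\cdot S^kg-S^jf\cdot S^jg$, together with the uniform bound $\sup_j\|S^jf\|_{M^{s_1}_{p_1,q'}}\lesssim\|f\|_{M^{s_1}_{p_1,q'}}$ of \eqref{ws-30} and the smallness of the auxiliary multiplier $h_2$ in the relevant dual space, one obtains the weak convergence of $(S^kf\cdot S^kg)_k$ in $\cs'$, so that the product $f\cdot g$ exists; the Fatou property (Lemma~\ref{basic}) then upgrades the uniform band-limited bound into the claimed inequality for $f\cdot g$ itself. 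The step needing the most care is the symmetry identity $\langle f\cdot g,h\rangle=\langle g,f\cdot h\rangle$: it is transparent for band-limited $f,g$ and $h\in\cs$, which is all that is used, but one must insert the approximation $S^j$ before any pairing is dualised. A minor point is the endpoint $q=1$, where $q'=\infty$; here only the elementary direction of the duality — the supremum over $h\in\cs$ bounding the $M^{s_0}_{p,q}$-norm from below — is invoked, so no density of $\cs$ in $M^{-s_0}_{p',\infty}$ is required.
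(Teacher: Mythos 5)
Proposition~\ref{toft} is not proved in the paper at all: it is quoted verbatim from Toft, Johansson, Pilipovi\'c and Teofanov \cite{toftAlg}, so there is no internal proof to compare against. What you have written is, in substance, a proof of a \emph{different} statement, namely Theorem~\ref{algebra20} (for which the paper indeed only remarks that ``the duality argument used in the proof of Lemma~\ref{notw}'' applies -- your argument is a reasonable implementation of exactly that remark). The duality device $\|f\cdot g\|_{M^{s_0}_{p,q}}\asymp\sup_h|\langle g, f\cdot h\rangle|$ followed by Theorem~\ref{algebra12} unavoidably produces the pair of exponents $q_1=q'$, $q_2=q$ and the extra hypotheses $s_2\le 0$ and $s_1+s_2-s_0>n/q$ (strict). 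None of these restrictions is part of Proposition~\ref{toft}: there the three exponents $q,q_1,q_2$ are independent and tied together only by condition (b), the smoothness parameters $s_1,s_2,s_0$ may all be positive, and equality is permitted in (e) away from the endpoint configurations listed in (f). For instance $M^{n/4}_{p_1,2}\cdot M^{n/4}_{p_2,2}\hookrightarrow M^{0}_{p,2}$ satisfies (a)--(f) but is unreachable by your scheme (it would force $\tilde s_0=-s_2=-n/4<0$ in Theorem~\ref{algebra12}); likewise the borderline regime $1+\frac1q-\frac1{q_1}-\frac1{q_2}=0$ with $s_1+s_2-s_0=0$ (the Cordero--Nicola case of Proposition~\ref{coni}) is entirely outside the range of your argument. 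So the proposal does not establish part (i) in the stated generality; a genuinely different mechanism (a Young-type convolution estimate in the frequency variable with three independent exponents, as in \cite{toftAlg}) is needed.

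Two further points. First, part (ii) of the proposition -- the necessity of (c), (d) and (e) -- is not addressed anywhere in your proposal; the paper's own Lemma following Theorem~\ref{algebra12} shows the kind of test functions ($\psi(x)e^{i\ell x}$ and lacunary-type sums) one would use. Second, even within your argument there is a small technical issue you flag but do not resolve: the bound $|\langle g,f\cdot h\rangle|\le\|g\|_{M^{s_2}_{p_2,q}}\|f\cdot h\|_{M^{-s_2}_{p_2',q'}}$ uses the duality pairing with $v=f\cdot h$, which is smooth of polynomial growth but not in $\cs$, so the equivalent-norm statement from Lemma~\ref{notw} must first be extended to such $v$ by an approximation argument before the supremum over $h$ can be taken.
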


\begin{rem}
 \rm
 Again we consider the case $q=q_1=q_2$. Then (b) implies $1\le q \le 2$ and (e) reads as 
 $s_1 + s_2 -s_0 - n/q' \ge 0$.
 Hence, if we restrict us to  $1 < q \le  2$,  Proposition \ref{toft} is slightly  more general than 
Theorem \ref{algebra12} and Theorem \ref{algebra20}.
 However, for our purpose, see the next section on composition of functions,  Theorem \ref{algebra12} is already sufficient.
 Let us mention that Proposition \ref{subalgebra1} below, which is nothing but a modification of Lemma \ref{algebra10}, 
 is of central importance for the applications to composition operators we have in mind. 
\end{rem}

\subsection*{An important special case}

We consider $M^s_{2,2}$. A simple argument,  based on the frequency-uniform decomposition yields
$M^s_{2,2} = H^s$ in the sense of equivalent norms, see Remark \ref{hs=ms}.
For these Sobolev spaces $H^s$ almost all is known.
\begin{itemize}
 \item 
$H^s$ is an algebra with respect to pointwise multiplication if and only if $s>n/2$, 
see Strichartz \cite{St}, Triebel \cite[2.8]{triebel} or 
\cite[Thm.~4.6.4/1]{RS}. 
This coincides with Thm. \ref{mult2}.

\item 
Let $E$ be a Banach space of functions.  By $M(E)$ we denote the set of all pointwise multipliers of $E$, i.e., the set of all $f$
such that $T_f $, defined as $T_f (g) = f \cdot g$, maps $E$ into $E$.
We equip $M(E)$ with the norm $\| f\|_{M(E)}:= \|T_f\|_{\cl (E)}$.
For a description of $M(H^s)$ one needs the classes $H^{s,\ell oc}$. 
Here $H^{s,\ell oc}$ denotes the collection of all 
distributions $f\in \cs'$  such that $ f \, \cdot \, \varphi \in H^s$ for all  $\varphi \in C_0^\infty$.
In case $s>n/2$ it holds
\[
M(H^s) = \Big\{f \in H^{s,\ell oc}: \quad \|f\|_{M(H^s)}^*:= \sup_{\lambda \in \R} \| \psi (\, \cdot -\lambda)\, f\, \|_{H^s} < \infty\Big\}
\]
in the sense of equivalent norms.
Here $\psi$ is a smooth nontrivial cut-off function supported around the origin.
For all this we refer to Strichartz \cite{St}.
\item
In case $0 \le  s < n/2$ also characterizations of $M(H^s)$ are known, this time more complicated, based on capacities.
For all details we refer  to the monograph of Maz'ya and Shaposnikova \cite[Thm.~3.2.2, pp. 86]{MaSh2}.
\item
Now we concentrate on the situation described in Thm. \ref{algebra12} in case $0 < s < \frac n2$. As it is well-known, there
exists a constant $c$ such that 
\[ 
\|\, f \cdot g \, \|_{{H}^{2s-n/2}} \leq c \, \| f \|_{H^{s}}\,  \| g\|_{H^{s}} 
\]
holds for all $f, g \in {H}^{s}$, see, e.g.,  \cite[Thm.~4.5.2]{RS}. 
In Thm. \ref{algebra12} we proved that for any $\varepsilon >0$ there exists a constant $c_\varepsilon$ such that
\[ 
\|\, f \cdot g \, \|_{{M}^{2s-n/2-\varepsilon}_{1,2}} \leq c_\varepsilon \, \| f \|_{H^{s}}\,  \| g\|_{H^{s}} 
\]
holds for all $f, g \in {H}^{s}$.
We conjecture that ${M}^{2s-n/2-\varepsilon}_{1,2}$ and ${H}^{2s-n/2}$ are incomparable.
\end{itemize}


\section{Composition of functions} \label{comp}


There are some attempts to investigate composition of functions in the framework of modulation spaces, i.e.,
we consider the operator 
\be\label{tf}
T_f : \quad g \mapsto f \circ g, \qquad g \in M^s_{p,q}, 
\ee
and ask for mapping properties.
Of course, we used the symbol $T_f$ before with a different meaning, but we hope that will not cause problems.
Within Section \ref{comp} $T_f$ will have the meaning as in \eqref{tf}.
Based on pointwise multiplication one can treat  $f$ to be a polynomial
or even the more general case of $f$ being an entire function.


\subsection{Polynomials} \label{poly}


We consider the case

\[
f(z):= \sum_{\ell=1}^m \, a_\ell \, z^\ell\, , \qquad z \in \C\, ,
\]
where $m \in \N$, $m \ge 2$, and $a_\ell \in \C$, $\ell=1, \ldots \, , n$.
For brevity we denote the associated composition operator by $T_m$. 
In addition we need the abbreviation
\[
t_m (s):= s + (m-1)(s-n/q')\, , \qquad m=2,3,\ldots\, . 
\]

\begin{thm} \label{algebra13}
Let $1\leq p,q\leq \infty$ and $m \in \N$, $m \ge 2$.  
\\
{\rm (i)} Let either $s\ge 0$ and $q=1$ or $s>n/q'$. Then 
$T_m$ maps $M^s_{p,q}$ into itself. There exists a constant $c$ such that
\[
 \|\,  T_m g\, \|_{M^s_{p,q}} \le c \, \|\,  g\, \|_{M^s_{p,q}}\,  \sum_{\ell=1}^m \, |a_\ell| \,    \| \, g\, \|_{M^0_{\infty,1}}^{\ell-1}
\]
holds for all $g \in M^s_{p,q}$.
\\
{\rm (ii)}
Let $1 < q \le \infty$,   $0 <  s \le n/q'$ and $ t_{m}(s)> 0$. 
If $p \in [m,\infty]$ and $t<t_m(s)$,  then there exists a constant $c$ such that 
\[ 
\|\, T_m g \, \|_{{M}_{p/m,q}^{t}} \leq c \, \sum_{\ell=1}^m \, |a_\ell| \,   \| g\|_{{M}_{p,q}^{s}}^{\ell} 
\]
holds for all  $g \in {M}_{p,q}^{s}$.
\\
{\rm (iii)}
Let $q=1$ and    $s \ge 0$.  
If $p \in [m,\infty]$,  then there exists a constant $c$ such that 
\[ 
\|\, T_m g \, \|_{{M}_{p/m,1}^{s}} \leq c \, \sum_{\ell=1}^m \, |a_\ell| \,   \| g\|_{{M}_{p,1}^{s}}^{\ell} 
\]
holds for all  $g \in {M}_{p,1}^{s}$.
\end{thm}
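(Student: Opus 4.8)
The plan is to exploit that for a polynomial the composition collapses to a finite sum of powers, $T_m g=\sum_{\ell=1}^m a_\ell\,g^\ell$, so that by the triangle inequality it suffices to estimate each $\|g^\ell\|$ in the relevant target space against $\|g\|^\ell$ and then sum. All three parts are then instances of iterating a bilinear product estimate, written as $g^\ell=g\cdot g^{\ell-1}$; only the available estimate changes with the parameter regime.

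For part (i) the hypotheses are exactly those for which $M^s_{p,q}$ is an algebra (Theorem \ref{mult2}), equivalently $M^s_{p,q}\hookrightarrow M^0_{\infty,1}$ by Corollary \ref{einbettung2}. I would use the refined bilinear bound of Corollary \ref{mult4}, namely $\|uv\|_{M^s_{p,q}}\lesssim \|u\|_{M^0_{\infty,1}}\|v\|_{M^s_{p,q}}+\|u\|_{M^s_{p,q}}\|v\|_{M^0_{\infty,1}}$, together with the fact that $M^0_{\infty,1}$ is itself an algebra, and prove by induction on $\ell$ that $\|g^\ell\|_{M^s_{p,q}}\lesssim \|g\|_{M^s_{p,q}}\,\|g\|_{M^0_{\infty,1}}^{\ell-1}$. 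Summing against $|a_\ell|$ gives precisely the asserted bound.

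For parts (ii) and (iii) the target integrability $p/m$ records the loss from forming an $m$-fold product, and I would obtain it by iterating the sharp product estimate: Lemma \ref{mult6} in case (iii) (where $q=1$, $s\ge 0$) and Theorem \ref{algebra12} in case (ii). Writing $g^\ell=g\cdot g^{\ell-1}$ and splitting the integrability as $\tfrac1p+\tfrac{\ell-1}{p}=\tfrac{\ell}{p}$ places the $\ell$-th power in $M^{\cdot}_{p/\ell,q}$. In (ii) the admissible smoothness obeys the recursion $t_\ell=s+t_{\ell-1}-n/q'$ forced by the constraint $s_1+s_2-s_0>n/q'$ of Theorem \ref{algebra12}; with $t_1=s$ this solves to $t_\ell(s)=s+(\ell-1)(s-n/q')$, exactly the quantity in the statement. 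The remaining hypotheses of Theorem \ref{algebra12} hold because $s\le n/q'$ makes $t_\ell$ nonincreasing, so $t_{\ell-1}(s)\le s$ and $\min(s_1,s_2)=t_{\ell-1}$, while $t_m(s)>0$ forces $t_\ell(s)\ge t_m(s)>0$ for every $\ell\le m$, keeping all intermediate smoothness indices strictly positive (the loss $\varepsilon$ absorbing the strict inequalities). This yields $\|g^\ell\|_{M^{t_\ell(s)-\varepsilon}_{p/\ell,q}}\lesssim\|g\|_{M^s_{p,q}}^\ell$, and in the same way $\|g^\ell\|_{M^{s}_{p/\ell,1}}\lesssim\|g\|_{M^s_{p,1}}^\ell$ for (iii).

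The main obstacle is the final assembly. The powers $g^\ell$ are produced in spaces with \emph{different} integrability indices $p/\ell$, whereas the conclusion is stated in the single space $M^{t}_{p/m,q}$ (respectively $M^s_{p/m,1}$), and the integrability index cannot be decreased by any of the embeddings in Corollary \ref{einbettung} or Proposition \ref{einbettung1}. The leading power $\ell=m$ already realizes the stated index $p/m$ and smoothness $t<t_m(s)$; the lower powers carry surplus regularity $t_\ell(s)>t_m(s)$ but sit at the coarser index $p/\ell>p/m$. Reconciling these into the common target — i.e. showing that the surplus regularity of each lower power compensates for its excess integrability so that the whole sum lands in $M^{t}_{p/m,q}$ — is the delicate step, and it is precisely here that the hypothesis $p\in[m,\infty]$ and the threshold $t_m(s)>0$ must be used. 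Once this is settled, tracking the multiplicative constants through the induction so that the final constant depends only on $m,s,q$ and the $\varepsilon$-gap completes the proof.
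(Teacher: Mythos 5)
Your overall strategy coincides with the paper's: part (i) by induction from Lemma \ref{mult1}/Corollary \ref{mult4}, part (ii) by iterating Theorem \ref{algebra12} on $g^{\ell}=g^{\ell-1}\cdot g$ with the recursion $t_\ell = s+t_{\ell-1}-n/q'$ and an $\varepsilon$-loss, part (iii) from Lemma \ref{mult6}. Your treatment of the pure powers, including the verification of the hypotheses $s_0\le\min(s_1,s_2)$ and $s_1+s_2-s_0>n/q'$ at each induction step, is exactly the paper's Step 1. Part (i) is complete as you describe it, since there the target space $M^s_{p,q}$ is the same for every power.

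However, for parts (ii) and (iii) your proposal stops at the point you yourself call ``the delicate step,'' and this is a genuine, unresolved gap: the induction only yields $g^{\ell}\in M^{t_\ell(s)-\varepsilon}_{p/\ell,q}$, and for $\ell<m$ and $p<\infty$ there is no way to move this into the target $M^{t}_{p/m,q}$. By Proposition \ref{einbettung1} the condition $p_0\le p_1$ is \emph{necessary} for $M^{s_0}_{p_0,q_0}\hookrightarrow M^{s_1}_{p_1,q_1}$, so no amount of surplus smoothness $t_\ell(s)-t_m(s)>0$ compensates for the fact that $p/\ell>p/m$; nor can Theorem \ref{algebra12} be applied with target exponent $p/m$ to an $\ell$-fold product, since it requires $1/p_{\rm target}\le \ell/p<m/p$. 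The hypotheses $p\in[m,\infty]$ (which only guarantees $p/m\ge1$) and $t_m(s)>0$ do not repair this, contrary to your hope. You should be aware that the paper's own proof does not close this gap either: its induction step treats only the top monomial $g^{m+1}=g^{m}\cdot g$ and is silent about the lower-order terms $a_\ell g^{\ell}$, $1\le\ell<m$, which is why the argument reads as if $T_m$ consisted of the single power $g^m$. So you have correctly isolated the one point where the published argument is incomplete, but a full proof would require either restricting to $a_\ell=0$ for $\ell<m$, stating the conclusion termwise in $M^{t_\ell}_{p/\ell,q}$, or supplying an additional argument; merely ``tracking the constants'' will not settle it.
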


\begin{proof}
{\em Step 1.}  
 Both  parts, (i) and (ii),  can be proved by induction based on Theorem \ref{mult2} or Theorem \ref{algebra12}.
 We concentrate on the proof of (ii).
Let $m=2$. 
Then by assumption $t_2 (s)= 2s-n/q' > 0 $. Hence, we may apply Theorem \ref{algebra12}
with $p_1=p_2 =p$ and $s_1=s_2$ and obtain
\[ 
\|\,  g^2 \, \|_{{M}_{p/2,q}^{t}} \leq c \,  \| g\|_{{M}_{p,q}^{s}}^2 
\]
for any $t< 2s-n/q'= t_2(s)$.
Now we assume that part (ii) is correct for all natural numbers in the interval $ [2,m]$. 
We split the product $g^{m+1}$ into the two factors $g^{m}$ and $g$.
By assumption $g^{m} \in {M}_{p/m,q}^{t}$ for any $t< t_{m} (s)$.
We put $s_1 = t = t_{m} (s) -\varepsilon$, $s_2 =s$, $p_1 = p/m$ and $p_2 = p$, where we assume that $\varepsilon >0$ is sufficiently small. 
This guarantees
\[
s_1 + s_2- \frac{n}{q'} =  s+ (m-1) \Big(s-\frac{n}{q'}\Big) -\varepsilon + s - \frac{n}{q'} = t_{m+1}(s)-\varepsilon >0 \,.  
\]
Hence, we may choose $s_0$ by
\[
s_0 < \min (s_1,s_2, t_{m+1}(s)-\varepsilon) = t_{m+1}(s)-\varepsilon\, . 
\]
Since $\varepsilon >0$ is arbitrary, any value $< t_{m+1} (s)$ becomes admissible for $s_0$.
An application of Theorem \ref{algebra12} yields
\[ 
\|\, g^{m} \cdot g \, \|_{{M}_{p/(m+1),q}^{s_0}} \leq c \, \| \, g_m \, \|_{{M}_{p/m,q}^{t_m-\varepsilon}}\,  \| g\|_{{M}_{p,q}^{s}} \, . 
\]
{\em Step 2.} Part (iii) is an immediate consequence of Lemma \ref{mult6}. 
\end{proof}

\begin{rem}
 \rm
For the case $s=0$ we refer to Cordero, Nicola \cite{cordero}, Toft \cite{toftCont} and Guo et all \cite{guo}.
\end{rem}


\subsection{Entire functions} \label{entire}


We consider the case of $f$ being an entire analytic function on $\C$, i.e.,
\[
f(z):= \sum_{\ell=0}^\infty \, a_\ell \, z^\ell\, , \qquad z \in \C\, ,
\]
where  $a_\ell \in \C$, $\ell=1, \ldots \, , n$.
Clearly, we need to assume $f(0)=a_0 = 0$. Otherwise $T_f g$ will not have global integrability properties. 
Let 
\[
f_0 (r):= \sum_{\ell=1}^\infty \, |a_\ell| \, r^\ell\, , \qquad r >0\, .
\]

\begin{thm} \label{algebra14}
Let $1\leq p,q\leq \infty$ and let either $s\ge 0$ and $q=1$ or $s>n/q'$. 
Let $f$ be an entire function satisfying $f(0)=0$.
Then 
$T_f$ maps $M^s_{p,q}$ into itself. There exist two constants $a,b$, independent of $f$, such that
\[
 \|\,  T_f g\, \|_{M^s_{p,q}} \le a \, f_0(b\, \|\,  g\, \|_{M^s_{p,q}})
\]
holds for all $g \in M^s_{p,q}$.
\end{thm}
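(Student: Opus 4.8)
The plan is to reduce the composition $f\circ g$ to its Taylor series and to control the powers $g^\ell$ geometrically in $\ell$. Since $f(0)=a_0=0$, formally $T_f g = f\circ g = \sum_{\ell=1}^\infty a_\ell\, g^\ell$, so I would first treat the partial sums $P_N(z):=\sum_{\ell=1}^N a_\ell z^\ell$, which are polynomials handled by Theorem \ref{algebra13}, and then pass to the limit $N\to\infty$. Under the hypotheses (either $s\ge 0$ and $q=1$, or $s>n/q'$) Theorem \ref{mult2} tells us that $M^s_{p,q}$ is a multiplication algebra, and Corollary \ref{einbettung2} gives the embeddings $M^s_{p,q}\hookrightarrow M^0_{\infty,1}\hookrightarrow L_\infty$; in particular every $g\in M^s_{p,q}$ is a bounded function, so $f\circ g$ is well defined pointwise.

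The heart of the argument is a geometric bound on powers: there exist constants $\alpha\ge 1$ and $\beta>0$, depending only on $p,q,s,n$ (not on $g$ or $\ell$), such that $\|g^\ell\|_{M^s_{p,q}}\le \alpha\,\beta^{\ell-1}\,\|g\|_{M^s_{p,q}}^\ell$ for all $\ell\ge 1$. I would prove this by induction on $\ell$, writing $g^{\ell+1}=g\cdot g^\ell$ and applying the bilinear estimate of Corollary \ref{mult4},
\[
\|g\cdot g^\ell\|_{M^s_{p,q}}\le c\,\big(\|g\|_{M^0_{\infty,1}}\,\|g^\ell\|_{M^s_{p,q}}+\|g\|_{M^s_{p,q}}\,\|g^\ell\|_{M^0_{\infty,1}}\big).
\]
Here $\|g^\ell\|_{M^0_{\infty,1}}$ is controlled by the algebra property of $M^0_{\infty,1}$ (the case $s=0$, $q=1$ of Theorem \ref{mult2}), which gives $\|g^\ell\|_{M^0_{\infty,1}}\le c_1^{\ell-1}\|g\|_{M^0_{\infty,1}}^\ell$, and then $\|g\|_{M^0_{\infty,1}}\le c_0\|g\|_{M^s_{p,q}}$ by Corollary \ref{einbettung2}. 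Feeding these into the recursion produces a linear recurrence for the constants whose inhomogeneous term grows only geometrically in $\ell$; solving it shows the constants themselves grow at most geometrically, which is exactly the claimed bound. Alternatively one may invoke Theorem \ref{algebra13}(i) directly for each $P_N$ and convert the $M^0_{\infty,1}$-norm on the right into the $M^s_{p,q}$-norm via Corollary \ref{einbettung2}.

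Granting the power bound, summation is immediate: for every $N$,
\[
\|T_{P_N}g\|_{M^s_{p,q}}\le\sum_{\ell=1}^N|a_\ell|\,\alpha\,\beta^{\ell-1}\|g\|_{M^s_{p,q}}^\ell=\frac{\alpha}{\beta}\sum_{\ell=1}^N|a_\ell|\,\big(\beta\,\|g\|_{M^s_{p,q}}\big)^\ell\le\frac{\alpha}{\beta}\,f_0\!\big(\beta\,\|g\|_{M^s_{p,q}}\big),
\]
which is the desired inequality with $a=\alpha/\beta$ and $b=\beta$, both independent of $f$. Applying the same estimate to differences $P_N-P_{N'}$ shows that $(T_{P_N}g)_N$ is Cauchy, hence convergent, in $M^s_{p,q}$; the right-hand side is finite because $f$ being entire forces $f_0$ to have infinite radius of convergence.

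It remains to identify the $M^s_{p,q}$-limit with $f\circ g$. Since $g$ is bounded, its range lies in a compact disc of $\C$ on which the Taylor polynomials $P_N$ converge uniformly to $f$; hence $T_{P_N}g\to f\circ g$ uniformly on $\R$, in particular in $L_\infty$ and therefore in $\cs'$. As $M^s_{p,q}\hookrightarrow\cs'$ (Lemma \ref{basic}(iii)) and $L_\infty\hookrightarrow\cs'$, the two limits must agree, so $f\circ g\in M^s_{p,q}$ and the norm bound passes to the limit, where one may also invoke the Fatou property, Lemma \ref{basic}(iv). The main obstacle is precisely the inductive power estimate: one must verify that the constants in the recurrence grow no faster than geometrically, for only then is the resulting majorant genuinely $f_0$ evaluated at a rescaled argument rather than a divergent series.
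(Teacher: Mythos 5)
Your proposal is correct and follows essentially the same route as the paper: an inductive bound on $\|g^\ell\|_{M^s_{p,q}}$ via the bilinear estimate of Lemma \ref{mult1}/Corollary \ref{mult4} together with the algebra property of $M^0_{\infty,1}$ and the embedding $M^s_{p,q}\hookrightarrow M^0_{\infty,1}$, followed by summation of the Taylor series; the linear factor $\ell$ that the recurrence produces is absorbed into the geometric constant exactly as the paper does with $\sup_m m^{1/m}=3^{1/3}$. Your additional care about the convergence of the partial sums and the identification of the $M^s_{p,q}$-limit with $f\circ g$ is a welcome supplement to the paper's terser argument, but it is not a different method.
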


\begin{proof}
The constant $c$ in Theorem \ref{algebra13}(i) depends on $m$.
To clarify the dependence on $m$ we proceed by induction.
Let $c_1$ be the best constant in the inequality 
\be\label{ws-40}
 \|\,  g_1\cdot  g_2\, \|_{M^s_{p,q}} \le c_1 \, (\|\,  g_1\, \|_{M^s_{p,q}}\,  \| \, g_2\, \|_{M^0_{\infty,1}}
 + \|\,  g_2\, \|_{M^s_{p,q}}\,  \| \, g_1\, \|_{M^0_{\infty,1}} ) \, ,
 \ee
see Lemma \ref{mult1}.
Further, let  $c_2$ be the best constant in the inequality 
\be\label{ws-41}
 \|\,  g_1\cdot  g_2\, \|_{M^0_{\infty,1}} \le c_2 \, \|\,  g_1\, \|_{M^0_{\infty,1}}\,  \| \, g_2\, \|_{M^0_{\infty,1}}\, ,
 \ee
see also Lemma \ref{mult1}.
 By $c_3$ we denote $\max(1,c_1,c_2)$. Our induction hypothesis consists in: the inequality 
 \[
 \|\, g^m\, \|_{M^s_{p,q}} \le c_3^{m-1} \, m  \, \|\,  g\, \|_{M^s_{p,q}} \| \, g\, \|_{M^0_{\infty,1}}^{m-1} 
\]
holds for all $ g \in {M^s_{p,q}}$ and all $m \ge 2$.
This follows easily from \eqref{ws-40} and \eqref{ws-41}.
Next we need the best constant, denoted by $c_4$, in the inequality
\[
\| \, g\, \|_{M^0_{\infty,1}} \le c_4\,  \| \, g\, \|_{M^s_{p,q}}\, , \qquad g \in M^s_{p,q}\, .
\]
This proves that
 \be\label{ws-42}
 \|\, g^m\, \|_{M^s_{p,q}} \le c_3^{m-1} \, m\, c_4^{m-1}  \, \|\,  g\, \|^m_{M^s_{p,q}} 
\ee
holds for all $ g \in {M^s_{p,q}}$ and all $m \ge 2$.
Hence
\beqq
\| \, T_f g\, \|_{M^s_{p,q}} & \le & \sum_{m=1}^\infty |a_m| c_3^{m-1} \, m\, c_4^{m-1}  \, \|\,  g\, \|^m_{M^s_{p,q}} \\
	& = & \frac{1}{c_3 \, c_4}\,  \sum_{m=1}^\infty |a_m|  \, m\, (c_3 \, c_4  \, \|\,  g\, \|_{M^s_{p,q}} )^m \, .
\eeqq
Since
\[
\sup_{m \in \N} m^{1/m} = 3^{1/3}
\]
the claimed estimate follows.
\end{proof}

\begin{rem}
 \rm
Theorem \ref{algebra14} is essentially known,  see, e.g., Sugimoto, Tomita and Wang \cite{Sugi} or Bhimani \cite{Bhim}. 
\end{rem}

\subsection*{One example}

The following example has been considered at various places.
Let $f(z):= e^z-1$, $z \in \C$. For appropriate constants $a,b>0$
it follows that
\be\label{ws-43}
 \|\,  e^g - 1\, \|_{M^s_{p,q}} \le a \, e^{b\, \|\,  g\, \|_{M^s_{p,q}}}
\ee
holds for all $g \in M^s_{p,q}$.
\\

It will be essential for our approach to non-analytic composition results that 
we can improve this estimate.


\subsection{Non-analytic superposition operators} \label{comp2}


There is a famous classical result by Katznelson \cite{Ka} (in the periodic case) and by 
Helson, Kahane,  Katznelson,  Rudin \cite{HKKR} (nonperiodic case) 
which says that only analytic functions operate on the Wiener algebra $\mathcal{A}$.
More exactly, the operator $T_f : ~ u \mapsto f(u)$ maps $\mathcal{A}$ into $\mathcal{A}$ if and only 
if $f(0)=0$ and $f$ is analytic. Here $\mathcal{A}$ is the collection of all $u \in C$ such that 
$\cf u \in L_1 $. Moreover, a similar result is obtained for particular standard modulation spaces. 
Bhimani and  Ratnakumar \cite{Bihami2}, see also Bhimani \cite{Bhim},  proved that  $T_f$ maps $M_{1,1}$ into $M_{1,1}$ if and only if $f(0)=0$ and $f$ is analytic. 
Therefore, the existence of non-analytic superposition results for weighted modulation spaces  is a priori not so clear.
\\
We shall concentrate on the algebra case. Our first aim consists in deriving a better estimate than \eqref{ws-43}. 
\\
To proceed we need some preparations.  An essential tool in proving our main result 
will be a certain subalgebra property.  Therefore we consider the 
following  decomposition of the phase space. Let $R>0$ and $\epsilon=(\epsilon_1,\ldots, \epsilon_n)$ be fixed with 
$\epsilon_j \in \{0,1\}$, $j=1,\ldots,n$. Then a decomposition of $\R$ into $(2^n+1)$ parts is given by
\[ P_R := \{ \xi\in\R :\: |\xi_j|\leq R, j=1,\ldots, n \} \]
and
\[ 
P_R(\epsilon) := \{ \xi\in \R: \: \sign (\xi_j) = (-1)^{\epsilon_j}, \: j=1,\ldots,n \} \setminus P_R. 
\]
For given $p,q,s$, $\epsilon=(\epsilon_1,\ldots, \epsilon_n)$ and $R>0$ we introduce the spaces
\[ 
{M}_{p,q}^{s}(\epsilon, R) := \{ f\in {M}_{p,q}^{s}: \: \supp \cf f \subset P_R(\epsilon) \}\, . 
\]

\begin{prop} \label{subalgebra1}
Let $1\leq p_1,p_2\leq \infty$, $1 < q \le \infty$ and $s_0 , s_1,s_2 \in \re$.  
Define $p$ by $\frac{1}{p}:= \frac{1}{p_1}+\frac{1}{p_2}$. Let $R > 2$.
If $p \in [1,\infty]$, $s_0 \le \min (s_1,s_2)$, $s_1, s_2 \ge 0$ and  $s_1 + s_2-s_0 > n/q'$, 
then there exists a constant $c$ such that 
\[ 
\|\, f \cdot g \, \|_{{M}_{p,q}^{s_0}} \leq c \,  (R-2)^{-[(s_1 +s_2 -s_0 ) - n/q']} \,  \| f \|_{{M}_{p_1,q}^{s_1}}\,  \| g\|_{{M}_{p_2,q}^{s_2}} 
\]
holds for all $f \in {M}_{p_1,q}^{s_1}(\epsilon,R)$ and all $g \in {M}_{p_2,q}^{s_2}(\epsilon,R)$.
The constant $c$ is independent from $R>2$ and $\epsilon$. 
\end{prop}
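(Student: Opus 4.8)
The plan is to run the proof of Lemma \ref{algebra10} essentially verbatim, tracking exactly where the support hypotheses $\supp \cf f, \supp \cf g \subset P_R(\epsilon)$ sharpen the estimate, and then to remove the compact-support assumption by the approximation-and-Fatou argument of Theorem \ref{algebra12}. First I would reduce, as in Step 1 of Lemma \ref{algebra10}, to controlling the two sums $S_{1,t,k}$ and $S_{2,t,k}$: write $f=\sum_j f_j$, $g=\sum_l g_l$, observe that $\cfi(\sigma_k \cf(f_j\cdot g_l))$ vanishes unless $\|k-(j+l)\|_\infty < 3$, and then apply Young's inequality (producing $c_1=(2\pi)^{-n/2}\|\cfi\sigma_0\|_{L_1}$), the generalized Minkowski inequality, and H\"older's inequality with respect to $p$. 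The only structural novelty enters at the weight-splitting step $\langle k\rangle^{s_0}[\,\cdots\,]$ and at the $\ell_{q'}$-factor of the final H\"older inequality.

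Second, I would record the two geometric consequences of working inside a single octant. Writing $\Lambda(\epsilon,R):=\{k\in\Z:\ Q_k\cap P_R(\epsilon)\neq\emptyset\}$, any nonzero $f_k$ or $g_l$ forces $k,l\in\Lambda(\epsilon,R)$, hence $\|k\|_\infty,\|l\|_\infty > R-1$ and the coordinatewise signs of $k,l$ are those prescribed by $\epsilon$. The frequencies therefore add \emph{coherently}: if $\eta,\zeta$ lie in the octant then $|(\eta+\zeta)_i|=|\eta_i|+|\zeta_i|$, so $\|\eta+\zeta\|_\infty\ge\max(\|\eta\|_\infty,\|\zeta\|_\infty)$, which at the level of indices gives $\langle k\rangle\gtrsim\max(\langle k-l\rangle,\langle l\rangle)$ for every surviving term. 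This is precisely what is needed to carry out the splitting step even when $s_0<0$: in the region $|l|\le|l-k|$ one obtains $\langle k\rangle^{s_0}\lesssim\langle k-l\rangle^{s_0}$, and in $|l-k|\le|l|$ one obtains $\langle k\rangle^{s_0}\lesssim\langle l\rangle^{s_0}$, for either sign of $s_0$. Thus $S_{1,t,k}$ and $S_{2,t,k}$ are formed exactly as in Lemma \ref{algebra10}, now also covering $s_0<0$.

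Third, and this is where the decay factor appears, I would keep the support restriction inside the $\ell_{q'}$-factor produced by H\"older's inequality. For $S_{1,t,k}$, after reindexing by $m=l$ the $\ell_{q'}$-factor is $\bigl(\sum_m(\langle k-l\rangle^{s_0-s_1}\langle l\rangle^{-s_2})^{q'}\bigr)^{1/q'}$, and using $\langle k-l\rangle\ge\langle l\rangle$ together with $s_0\le s_1$ it is dominated by $\bigl(\sum_m\langle m\rangle^{(s_0-s_1-s_2)q'}\bigr)^{1/q'}$. But only those $m=l$ with $g_l\neq0$ contribute, so the sum runs over $\|m\|_\infty>R-2$; since $(s_1+s_2-s_0)q'>n$,
\[
\Big(\sum_{\|m\|_\infty > R-2}\langle m\rangle^{(s_0-s_1-s_2)q'}\Big)^{1/q'}\lesssim (R-2)^{(s_0-s_1-s_2)+n/q'}=(R-2)^{-[(s_1+s_2-s_0)-n/q']}.
\]
The sum $S_{2,t,k}$ is treated symmetrically, the reindexing now isolating the $f$-frequency, which again lies in $\Lambda(\epsilon,R)$. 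Feeding these bounds back through the Minkowski step and the factor $c_4$ that absorbs the bounded shift $t$, exactly as in Substeps 2.1--2.2 of Lemma \ref{algebra10}, yields the claimed inequality for all $f,g$ with compact Fourier support, with constant independent of $R$ and $\epsilon$.

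Finally, to drop the compact-support assumption I would replace $f,g$ by $S^jf,S^jg$; since $S^j$ is the Fourier multiplier by $\psi(2^{-j}\cdot)$ it does not enlarge supports, so $\supp\cf(S^jf)\subset P_R(\epsilon)$ and the uniform estimate applies to each pair $(S^jf,S^jg)$. Passing to the limit — in norm when $q<\infty$, and via weak convergence together with the Fatou property of Lemma \ref{basic} when $q=\infty$, as in Theorem \ref{algebra12} — gives the assertion in general. I expect the main obstacle to be the coherent-addition estimate $\langle k\rangle\gtrsim\max(\langle k-l\rangle,\langle l\rangle)$: it is the sign structure of $P_R(\epsilon)$, rather than the mere size condition $\|\xi\|_\infty>R$, that prevents cancellation of the output frequency and thereby both admits $s_0<0$ and forces the reindexing variable into $\Lambda(\epsilon,R)$. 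Extracting the sharp $(R-2)$ power rather than a lossy $(R-O(1))$ one is the single place where careful bookkeeping of the width-$2$ boxes and of the bounded shift $t$ is required.
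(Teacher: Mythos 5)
Your proposal is correct and follows essentially the same route as the paper: reduce to the $S_{1,t,k}$, $S_{2,t,k}$ sums of Lemma \ref{algebra10}, observe that the surviving indices lie in the set the paper calls $P_R^*(\epsilon)$, and extract the decay factor from the H\"older $\ell_{q'}$-tail $\bigl(\sum_{\|m\|_\infty>R-2}\langle m\rangle^{(s_0-s_1-s_2)q'}\bigr)^{1/q'}\lesssim (R-2)^{-[(s_1+s_2-s_0)-n/q']}$ via comparison with an integral. You in fact supply two details the paper leaves implicit — the coherent-addition inequality $\langle k\rangle\gtrsim\max(\langle k-l\rangle,\langle l\rangle)$ needed for the weight-splitting when $s_0<0$, and the $S^j$-approximation/Fatou step removing the compact Fourier support assumption — both of which are correct.
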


\begin{proof}
In order to show the subalgebra property we follow the same steps as in the
proof of Lemma \ref{algebra10}. We start with some almost trivial observations.
Let $f\in M_{p_1,q}^{s}(\epsilon, R)$ and $g\in M_{p_2,q}^{s}(\epsilon, R)$. By
\[ 
\supp (\cf f \ast \cf g) \subset \{ \xi +\eta: \xi \in \supp \cf f, \eta \in \supp \cf g \} 
\]
we have $\supp \cf (fg) \subset P_R(\epsilon)$. 
Let 
 \[
 P_R^* (\epsilon):= \Big\{k \in \Z: \quad \|k\|_\infty>R-1\, , \quad \sign (k_j) = (-1)^{\epsilon_j}, \: j=1,\ldots,n\Big\}.
 \]
 Hence, if $\supp \sigma_k \cap P_R(\epsilon) \neq \emptyset$, then $k \in P_R^*(\epsilon)$ follows.
Now we continue as in proof of Lemma \ref{algebra10}, Step 2, and obtain
\begin{eqnarray*}
 && \hspace{-0.9cm} \Big( \sum_{k\in P_R^* (\epsilon)}  \langle k\rangle^{s_0 q} \|\cfi \big(\sigma_k \cf (f\cdot g) \big)\|_{L_p}^q \Big)^{\frac{1}{q}} 
\\
& \leq & \sum_{\substack{t \in \Z, \\ -3<t_i < 3, \\ i=1,\ldots, n}} \Bigg( \sum_{k\in P_R^* (\epsilon)}  \langle k\rangle^{s_0q}
\Big[ \sum_{{l \in \Z: \atop t-l+k, l\in P_R^* (\epsilon)}} \| \cfi \big(\sigma_k \cf ( f_{t-(l-k)} \cdot g_l) \big)\|_{L_p} \Big]^q \Bigg)^{\frac{1}{q}} \, .
\end{eqnarray*}
This implies
\begin{eqnarray*}
&& \hspace{-0.9cm} \Big( \sum_{k\in P_R^* (\epsilon)}  \langle k\rangle^{s_0 q} \|\cfi \big(\sigma_k \cf (f\cdot g) \big)\|_{L_p}^q \Big)^{\frac{1}{q}} 
\\
& \leq & c_1\,  \sum_{\substack{t \in \Z, \\ 
-3<t_i < 3, \\ i=1,\ldots, n}} \Bigg( \sum_{k\in P_R^* (\epsilon)}  \langle k\rangle^{s_0q} \Big[ \sum_{{l \in \Z: \atop t-l+k, l\in P_R^* (\epsilon)}} 
\|f_{t-(l-k)}\cdot  g_l\|_{L_p} \Big]^q \Bigg)^{\frac{1}{q}}
\\
& \leq & c_2 \, \max_{\substack{t \in \Z, \\ -3<t_i < 3, \\ i=1,\ldots, n}} \Bigg( \sum_{k\in P_R^* (\epsilon)}  \langle k\rangle^{s_0q} 
\Big[ \sum_{{l \in \Z: \atop t-l+k, l\in P_R^* (\epsilon)}} \|f_{t-(l-k)}\|_{L_{p_1}} \|g_l\|_{L_{p_2}} \Big]^q \Bigg)^{\frac{1}{q}} \hspace{1cm}
\end{eqnarray*}
with $c_2$ and $ c_1$ as above. We put 
\begin{eqnarray*}
S_{1,t,k} & := & \hspace{-0.7cm} \sum_{\substack{l \in \Z: ~t-l+k, l\in P_R^* (\epsilon), \\
 |l|\leq |l-k|}}  \langle k-l\rangle^{s_1} \|f_{t-(l-k)}\|_{L_{p_1}}  \langle l\rangle^{s_2} \|g_l\|_{L_{p_2}} \, 
\langle k-l\rangle^{s_0 - s_1} \, \langle l\rangle^{-s_2} \, ; 
\\
S_{2,t,k} & := & \hspace{-0.7cm} \sum_{\substack{l \in \Z: ~ t-l+k, l\in P_R^* (\epsilon), \\ |l-k|\leq |l|}}  \langle k-l\rangle^{s_1} \, 
\|f_{t-(l-k)}\|_{L_{p_1}}  \langle l\rangle^{s_2} \, \|g_l\|_{L_{p_2}} \, \langle k-l\rangle^{- s_1} \, 
\langle l\rangle^{s_0-s_2} \, .
\end{eqnarray*}
H\"older's inequality leads to
\begin{eqnarray*}
S_{1,t,k} 
& \leq &   \Big( \sum_{\substack{j\in\Z, \\ |j+k|\leq |j|}} ( \langle j\rangle^{s_1} \|f_{t-j}\|_{L_{p_1}}  
\langle  j+k\rangle^{s_2} \|g_{j+k}\|_{L_{p_2}}   )^{q}\Big)^{1/q}  \\
			& & \qquad \qquad  \times \quad  \Big( \sum_{\substack{j\in\Z: t-j, j+k \in P_R^* (\epsilon)\\ 
|j+k|\leq |j|}} (\langle j \rangle^{s_0 - s_1} \, \langle j+k \rangle^{-s_2})^{q'}  \Big)^{\frac{1}{q'}} \, .
\end{eqnarray*}
Our assumptions $s_0 \le s_1$, $s_2 \ge 0$ and $s_1 + s_2 -s_0 >n/q'$ and $j+k \in P_R^* (\epsilon)$ imply
\beqq 
\hspace{-0.7cm} \Big( \sum_{\substack{j\in\Z, \\ 
|j+k|\leq |j|}} \Big|  && \hspace{-0.7cm}(\langle j \rangle^{s_0 - s_1} \, \langle j+k \rangle^{-s_2}) \Big|^{q'} \Big)^{\frac{1}{q'}} \le 
\Big( \sum_{m \in P_R^* (\epsilon)}  \langle m \rangle^{(s_0 -s_1 -s_2) q'} \Big)^{\frac{1}{q'}} 
\\
&\le & \Big( 2^{-n} \int_{\|x\|_\infty >R-2} (1+|x|^2)^{(s_0 -s_1 -s_2) q'/2}\, dx\Big)^{1/q'}
\\
& \le & \Big( 2^{-n} \int_{|x| > R-2} |x|^{(s_0 -s_1 -s_2) q'}\, dx\Big)^{1/q'}
\\
& \le & \Big(\frac{2^{-n}}{(s_1 + s_2 - s_0)q'-n}\Big)^{1/q'} \, (R-2)^{-[(s_1 +s_2 -s_0 ) - n/q']}  \, .
\eeqq
With $c_3 := \Big(\frac{2^{-n}}{(s_1 + s_2 - s_0)q'-n}\Big)^{1/q'} $ we insert this 
in our previous estimate and  obtain
\begin{eqnarray*}
\Big(\sum_{k \in \Z} S_{1,t,k}^q\Big)^{1/q} & \le  & c_3\, (R-2)^{-[(s_1 +s_2 -s_0 ) - n/q']}  \\
	& &  \hspace{-0.4cm} \times \,  \Bigg( \sum_{k\in\Z} \sum_{\substack{j\in\Z, \\ |j+k|\leq |j|}} \langle j \rangle^{s_1q}  \|f_{t-j}\|_{L^{p_1}}^q 
\langle j+k \rangle^{s_2q} \|g_{j+k}\|_{L^{p_2}}^q \Bigg)^{1/q}
\\
& \leq & c_3 \, c_4\,  (R-2)^{-[(s_1 +s_2 -s_0 ) - n/q']}  \,  \|g\|_{{M}^{s_2}_{p_2,q}} \|f\|_{{M}^{s_1}_{p_1,q}}
\, .
\end{eqnarray*}
Here $c_3,c_4$ are independent of $f,g, \epsilon$ and $R$.
For the second sum  the estimate
\[
\Big(\sum_{k \in \Z} S_{2,t,k}^q\Big)^{1/q} \le c_5\,  (R-2)^{-[(s_1 +s_2 -s_0 ) - n/q']} \, \|g\|_{{M}^{s_2}_{p_2,q}} \|f\|_{{M}^{s_1}_{p_1,q}} 
\]
follows by analogous computations. 
The proof is complete.
\end{proof}

Of course, the above arguments have a counterpart in case $q' = \infty$.

\begin{prop} \label{subal}
Let $1\leq p_1,p_2\leq \infty$, $q=1$ and $s_1,s_2 \in \re$.  
Define $p$ by $\frac{1}{p}:= \frac{1}{p_1}+\frac{1}{p_2}$. Let $R > 2$.
If $p \in [1,\infty]$,  $s_1, s_2 \ge 0$ and  $s_0 := \min (s_1, s_2)$, 
then there exists a constant $c$ such that 
\[ 
\|\, f \cdot g \, \|_{{M}_{p,1}^{s_0}} \leq c \,  (R-2)^{-(s_1 +s_2 -s_0 )} \,  \| f \|_{{M}_{p_1,1}^{s_1}}\,  \| g\|_{{M}_{p_2,1}^{s_2}} 
\]
holds for all $f \in {M}_{p_1,1}^{s_1}(\epsilon,R)$ and all $g \in {M}_{p_2,1}^{s_2}(\epsilon,R)$.
The constant $c$ is independent from $R>2$ and $\epsilon$. 
\end{prop}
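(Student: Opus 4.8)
The plan is to repeat the proof of Proposition \ref{subalgebra1} almost verbatim, now with $q=1$ and hence $q'=\infty$. The single place where the argument genuinely changes is the estimate of the weight factors: for $q'=\infty$ the tail sum $\big(\sum_m \langle m\rangle^{(s_0-s_1-s_2)q'}\big)^{1/q'}$ that produced the decay in Proposition \ref{subalgebra1} degenerates into a pointwise supremum, and I would show that this supremum is still bounded by $(R-2)^{-(s_1+s_2-s_0)}$. For $q=1$ the proof actually becomes shorter, since no H\"older splitting in the inner summation index is needed.

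First I would carry out the preparatory reductions of Lemma \ref{algebra10} (Step 1) and of Proposition \ref{subalgebra1}. Writing $f=\sum_m f_m$ and $g=\sum_l g_l$ with $f_m=\Box_m f$, $g_l=\Box_l g$, the inclusions $\supp \cf f,\supp \cf g\subset P_R(\epsilon)$ force $f_m$ and $g_l$ to vanish unless $Q_m$, respectively $Q_l$, meets $P_R(\epsilon)$, i.e. unless $m,l\in P_R^*(\epsilon)$; in particular $\langle m\rangle,\langle l\rangle>R-1>R-2$ whenever the corresponding piece is nonzero. Since $\supp \cf(f_m g_l)$ lies near $m+l$, the term $\cfi(\sigma_k\cf(fg))$ vanishes unless $\|k-(m+l)\|_\infty<3$. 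Young's inequality (convolution with $\cfi\sigma_k$) together with H\"older's inequality for $L_p$, where $1/p=1/p_1+1/p_2$, reduce the claim, exactly as in Proposition \ref{subalgebra1}, to bounding for each of the finitely many shifts $t$ with $-3<t_i<3$ the quantity
\[
\Sigma_t:=\sum_{k\in\Z}\langle k\rangle^{s_0}\sum_{l\in\Z}\|f_{t+k-l}\|_{L_{p_1}}\,\|g_l\|_{L_{p_2}}.
\]

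The decisive and only novel step is then elementary. Re-indexing by the genuine $f$-index $m:=t+k-l$ (so that $k=m+l-t$, and $(m,l)$ runs over $P_R^*(\epsilon)\times P_R^*(\epsilon)$), I would write
\[
\Sigma_t=\sum_{m,l}\big(\langle m\rangle^{s_1}\|f_m\|_{L_{p_1}}\big)\big(\langle l\rangle^{s_2}\|g_l\|_{L_{p_2}}\big)\,W_{m,l},\qquad W_{m,l}:=\frac{\langle m+l-t\rangle^{s_0}}{\langle m\rangle^{s_1}\,\langle l\rangle^{s_2}}.
\]
From $\langle m+l-t\rangle\le\langle m\rangle+\langle l\rangle+\langle t\rangle$ and $\langle m\rangle,\langle l\rangle>1$ one obtains $\langle m+l-t\rangle^{s_0}\lesssim\langle m\rangle^{s_0}+\langle l\rangle^{s_0}$ for $s_0\ge0$, whence
\[
W_{m,l}\lesssim\langle m\rangle^{s_0-s_1}\langle l\rangle^{-s_2}+\langle m\rangle^{-s_1}\langle l\rangle^{s_0-s_2}.
\]
Because $s_0=\min(s_1,s_2)$ forces $s_0-s_1\le0$ and $s_0-s_2\le0$, while $s_1,s_2\ge0$ and $\langle m\rangle,\langle l\rangle>R-2>0$, each bracket carries a nonpositive power of a quantity exceeding $R-2$; multiplying out, both summands are at most $(R-2)^{s_0-s_1-s_2}$. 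Thus $W_{m,l}\lesssim(R-2)^{-(s_1+s_2-s_0)}$ uniformly in $m,l,t$ and in $\epsilon$. Pulling out this constant, the double sum factorises, and summing over the finitely many $t$ gives
\[
\|f\cdot g\|_{M^{s_0}_{p,1}}\lesssim(R-2)^{-(s_1+s_2-s_0)}\,\|f\|_{M^{s_1}_{p_1,1}}\,\|g\|_{M^{s_2}_{p_2,1}}.
\]

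The main obstacle is precisely the uniform weight bound: one must check that the four nonpositive exponents recombine into the single exponent $-(s_1+s_2-s_0)$ and that replacing $\langle m\rangle,\langle l\rangle$ by their common lower bound $R-2$ is legitimate, which works only because every exponent is nonpositive, a fact guaranteed by the hypotheses $s_0=\min(s_1,s_2)$ and $s_1,s_2\ge0$. A secondary point to watch is the spectral shift $t$: indexing by the true $f$-index $m$, rather than by $l-k$, keeps both large indices $\langle m\rangle,\langle l\rangle>R-1$ explicit, so that the clean base $R-2$ survives; indexing by the shifted variable would only yield the weaker base $R-3$, which is useless for $R$ close to $2$.
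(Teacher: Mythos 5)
Your proof is correct and follows the route the paper intends: the paper gives no separate argument for this proposition, remarking only that the proof of Proposition \ref{subalgebra1} ``has a counterpart in case $q'=\infty$'', and your argument is precisely that adaptation. Your uniform pointwise bound $W_{m,l}\lesssim (R-2)^{-(s_1+s_2-s_0)}$ --- valid because all four exponents are nonpositive and both surviving indices lie in $P_R^*(\epsilon)$, hence exceed $R-1>R-2$ in modulus --- is the degenerate $q'=\infty$ form of the H\"older factor in that proof, and factorising the double sum directly is a legitimate simplification of the $S_{1,t,k}/S_{2,t,k}$ splitting, which is indeed superfluous when $q=1$.
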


As a consequence of Nikol'kij's inequality, see Lemma \ref{nikolskij},  Prop. \ref{subalgebra1} (with $s_0=s_1=s_2$ and $p_1=p$, $p_2 = \infty$) 
and Corollary \ref{einbettung} we obtain the following.

\begin{prop} \label{subalgebra2}
Let $1\leq p \leq \infty$ and  $R > 2$.
\\
{\rm (i)} Let $1 < q \le \infty$ and $ s>n/q'$.  
Then there exists a constant $c$ such that 
\[ 
\|\, f \cdot g \, \|_{{M}_{p,q}^{s}} \leq c \,  (R-2)^{-(s- n/q')} \,  \| f \|_{{M}_{p,q}^{s}}\,  \| g\|_{{M}_{p,q}^{s}} 
\]
holds for all $f, g \in {M}_{p,q}^{s}(\epsilon,R)$.
The constant $c$ is independent from $R>2$ and $\epsilon$. 
\\
{\rm (ii)} Let $q=1$ and $ s  \ge 0$.  
Then there exists a constant $c$ such that 
\[ 
\|\, f \cdot g \, \|_{{M}_{p,1}^{s}} \leq c \,  (R-2)^{-s} \,  \| f \|_{{M}_{p,1}^{s}}\,  \| g\|_{{M}_{p,1}^{s}} 
\]
holds for all $f, g \in {M}_{p,1}^{s}(\epsilon,R)$.
The constant $c$ is independent from $R>2$ and $\epsilon$. 
\end{prop}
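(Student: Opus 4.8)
The plan is to read off both assertions as direct specializations of the bilinear estimates already established, namely Proposition \ref{subalgebra1} for part (i) and Proposition \ref{subal} for part (ii), followed by the monotonicity of the modulation spaces in the integrability parameter $p$ furnished by Corollary \ref{einbettung} (which in turn rests on Nikol'skij's inequality, Lemma \ref{nikolskij}). The key observation is that the two decay rates $(R-2)^{-(s-n/q')}$ in (i) and $(R-2)^{-s}$ in (ii) are exactly what the general statements produce once one sets $s_0=s_1=s_2=s$ together with $p_1=p$ and $p_2=\infty$.

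For part (i) I would apply Proposition \ref{subalgebra1} with $s_0=s_1=s_2=s$, $p_1=p$ and $p_2=\infty$, so that $\frac1{p_1}+\frac1{p_2}=\frac1p$ recovers the prescribed $p$. The hypotheses are then immediate: $s_0\le\min(s_1,s_2)$ holds with equality, $s_1=s_2=s\ge 0$ since $s>n/q'\ge 0$, and $s_1+s_2-s_0=s>n/q'$ is precisely the assumption. The conclusion reads
\[
\|\, f\cdot g\, \|_{M^s_{p,q}} \le c\,(R-2)^{-[(s+s-s)-n/q']}\,\|f\|_{M^s_{p,q}}\,\|g\|_{M^s_{\infty,q}}\,,
\]
and since $s+s-s=s$ the exponent collapses to $-(s-n/q')$. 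It then remains only to replace the factor $\|g\|_{M^s_{\infty,q}}$ by $\|g\|_{M^s_{p,q}}$, which is exactly the embedding $M^s_{p,q}\hookrightarrow M^s_{\infty,q}$ of Corollary \ref{einbettung} (raising $p$ up to $\infty$, trivial when $p=\infty$). Absorbing this embedding constant into $c$ yields the claimed inequality.

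Part (ii) is handled identically, but starting from Proposition \ref{subal}, which covers the endpoint $q=1$. Taking again $s_0=s_1=s_2=s\ge 0$, $p_1=p$, $p_2=\infty$, one has $s_0=\min(s_1,s_2)=s$, and the exponent produced by Proposition \ref{subal} is $-(s_1+s_2-s_0)=-s$, matching the target. The same $p$-monotonicity step $\|g\|_{M^s_{\infty,1}}\le C\,\|g\|_{M^s_{p,1}}$ from Corollary \ref{einbettung} then finishes the argument. In both parts the constants inherited from Propositions \ref{subalgebra1} and \ref{subal} are independent of $R>2$ and $\epsilon$, so this independence is preserved.

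Since the two propositions do all the real work, there is no genuine obstacle here; the only points requiring attention are the bookkeeping of the exponent (checking that $s+s-s=s$ gives precisely $(R-2)^{-(s-n/q')}$ and $(R-2)^{-s}$) and the observation that the Fourier-support restriction $\supp\cf f,\,\supp\cf g\subset P_R(\epsilon)$ of the hypotheses is unchanged under the reduction, so that $f,g\in M^s_{p,q}(\epsilon,R)$ legitimately feed into the general estimate. One should also note that the $p$-monotonicity used in the final step is exactly the content of Corollary \ref{einbettung} built on Nikol'skij's inequality; alternatively one could invoke Lemma \ref{nikolskij} directly on each block $g_k$, whose Fourier transform sits in a cube of fixed size, to obtain $\|g_k\|_{L_\infty}\lesssim\|g_k\|_{L_p}$ and then sum with the weights $\langle k\rangle^{s}$.
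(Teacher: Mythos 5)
Your argument is correct and coincides with the paper's own proof: the authors obtain Proposition \ref{subalgebra2} precisely by specializing Proposition \ref{subalgebra1} (resp.\ Proposition \ref{subal}) to $s_0=s_1=s_2=s$, $p_1=p$, $p_2=\infty$, and then using Nikol'skij's inequality via Corollary \ref{einbettung} to pass from $\|g\|_{M^s_{\infty,q}}$ back to $\|g\|_{M^s_{p,q}}$. The exponent bookkeeping and the hypothesis checks you carry out are exactly what is needed.
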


Note that in the following we assume every function to be real-valued unless it is explicitly stated that 
complex-valued functions are allowed. To make this more clear we switch from $g \in M^s_{p,q}$ to $u \in M^s_{p,q}$. 
\\
Next we have to recall some assertions from harmonic analysis. 
The first one concerns a standard estimate of Fourier multipliers, see, e.g., \cite[Theorem 1.5.2]{triebel}.

\begin{lem} \label{bernstein}
Let $1 \le r \le \infty$ and assume that $s>n/2$. Then there exists a constant $c>0$ such that
\[ 
\|\cfi [ \phi \, \cf  g](\,\cdot \, )\|_{L_r} \leq c\,  \|\phi\|_{H^s} \, \|g\|_{L_r} 
\]
holds for all $g \in L_r$ and all $\phi \in H^s$.
\end{lem}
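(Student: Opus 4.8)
The plan is to turn the Fourier multiplier into a convolution and then to reduce everything to a single kernel estimate. First I would invoke the convolution theorem: with the normalisation fixed in the introduction one has $\cf(u * v) = (2\pi)^{n/2}\, \cf u\,\cf v$, hence
\[
\cfi[\phi\,\cf g](\cdot) = (2\pi)^{-n/2}\,(\cfi\phi) * g\, .
\]
Since $1\le r\le\infty$, Young's convolution inequality then gives
\[
\|\cfi[\phi\,\cf g]\|_{L_r} \le (2\pi)^{-n/2}\,\|\cfi\phi\|_{L_1}\,\|g\|_{L_r}\, ,
\]
so the entire statement collapses to the kernel bound $\|\cfi\phi\|_{L_1}\ls\|\phi\|_{H^s}$, valid for all $\phi\in H^s$.

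To prove that bound I would insert the weight $\langle x\rangle^s$ and apply the Cauchy--Schwarz inequality,
\[
\|\cfi\phi\|_{L_1} = \int_{\R}\langle x\rangle^{-s}\,\langle x\rangle^{s}\,|\cfi\phi(x)|\,dx \le \Big(\int_{\R}\langle x\rangle^{-2s}\,dx\Big)^{1/2}\Big(\int_{\R}\langle x\rangle^{2s}\,|\cfi\phi(x)|^2\,dx\Big)^{1/2}\, .
\]
The first factor is finite precisely because $2s>n$, which is exactly where the hypothesis $s>n/2$ enters. For the second factor I would record the elementary identity $\cf\phi(\eta)=\cfi\phi(-\eta)$, a direct consequence of Fourier inversion; the substitution $\eta\mapsto-\eta$ together with the evenness of the weight then yields
\[
\Big(\int_{\R}\langle x\rangle^{2s}\,|\cfi\phi(x)|^2\,dx\Big)^{1/2} = \Big(\int_{\R}\langle\eta\rangle^{2s}\,|\cf\phi(\eta)|^2\,d\eta\Big)^{1/2} = \|\phi\|_{H^s}\, .
\]
Combining the three displays proves the lemma with the explicit constant $c=(2\pi)^{-n/2}\,\|\langle\cdot\rangle^{-s}\|_{L_2}$, which depends only on $n$ and $s$.

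The computation is essentially routine; the only delicate point is the kernel estimate, that is, the continuity of $\cfi\colon H^s\to L_1$. Its whole content is that $s>n/2$ forces $\langle\cdot\rangle^{-s}\in L_2$, and this is the sole obstruction: for $s\le n/2$ the weighted identity in the second factor persists, but the weight $\langle\cdot\rangle^{-s}$ is no longer square integrable and the method breaks down, as it must, since the conclusion itself fails in that range. I would close by noting that the argument is uniform in $g$ and $\phi$, so the constant $c$ is indeed independent of both, as claimed.
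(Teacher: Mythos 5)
Your proof is correct: the reduction via Young's inequality to the kernel bound $\|\cfi\phi\|_{L_1}\lesssim\|\phi\|_{H^s}$, followed by Cauchy--Schwarz with the weight $\langle\cdot\rangle^{s}$ and Plancherel, is exactly the standard argument behind \cite[Thm.~1.5.2]{triebel}, which is all the paper offers by way of proof (it only cites that reference). Nothing further is needed.
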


The next lemma is taken from \cite{brs}.

\begin{lem} \label{lma46brs}
Let $N\in \N$ and suppose $a_1, a_2,\ldots, a_N$ to be complex numbers. Then it holds
\[ 
a_1 \cdot a_2 \cdot \ldots \cdot a_N -1  = \sum_{l=1}^{N} \sum_{\substack{ j=(j_1,\ldots, j_l), \\ 
0\leq j_1 < \ldots < j_l \leq N }} (a_{j_1} -1) \cdot \ldots \cdot (a_{j_l}-1). 
\]
\end{lem}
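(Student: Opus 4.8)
The plan is to prove the identity by induction on $N$. The statement asserts that the product $a_1 \cdots a_N - 1$ equals the sum over all nonempty subsets $\{j_1 < \ldots < j_l\} \subset \{1, \ldots, N\}$ of the corresponding products of the shifted factors $(a_{j_i} - 1)$. I would first record the base case $N=1$: here the only nonempty subset is $\{1\}$ itself, so the right-hand side reduces to $a_1 - 1$, which is exactly the left-hand side.

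For the inductive step, I would assume the identity holds for $N-1$ factors and write $a_1 \cdots a_N = (a_1 \cdots a_{N-1}) \, a_N$. The natural move is to substitute $a_N = (a_N - 1) + 1$, giving
\[
a_1 \cdots a_N - 1 = (a_1 \cdots a_{N-1})(a_N - 1) + (a_1 \cdots a_{N-1} - 1).
\]
The second summand is handled directly by the induction hypothesis, producing all subset-products indexed by nonempty subsets of $\{1, \ldots, N-1\}$ --- that is, precisely those subsets not containing the index $N$. For the first summand I would again invoke the hypothesis in the form $a_1 \cdots a_{N-1} = 1 + \sum (a_{j_1}-1)\cdots(a_{j_l}-1)$, where the sum runs over nonempty subsets of $\{1, \ldots, N-1\}$, and multiply through by $(a_N - 1)$. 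This yields $(a_N - 1)$ itself, coming from the $1$, together with all products $(a_{j_1}-1)\cdots(a_{j_l}-1)(a_N-1)$; these are exactly the subset-products indexed by the subsets that do contain $N$.

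Combining the two contributions, every nonempty subset of $\{1, \ldots, N\}$ appears exactly once: those omitting $N$ from the second summand, and those containing $N$ from the first. This matches the claimed right-hand side for $N$, completing the induction. The only point requiring a little care is the bookkeeping that the two families of subsets (those containing $N$ and those not) partition all nonempty subsets of $\{1,\ldots,N\}$ with no overlap and no omission; once that combinatorial partition is seen clearly, the algebra is entirely routine and there is no real analytic obstacle. An alternative, non-inductive route would be to expand each factor as $a_j = 1 + (a_j - 1)$ in the product $\prod_{j=1}^N (1 + (a_j-1))$ and collect terms by which factors contribute the $(a_j-1)$ part; this gives the same sum over subsets immediately, with the empty subset accounting for the subtracted $1$.
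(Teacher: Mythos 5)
Your proof is correct and complete; both the induction and the one-line alternative (expanding $\prod_{j=1}^N \bigl(1+(a_j-1)\bigr)$ over subsets and noting that the empty subset contributes the subtracted $1$) are valid, and the bookkeeping of subsets containing versus omitting $N$ is handled properly. There is nothing to compare against here: the paper does not prove this lemma but simply imports it from \cite{brs}, so any correct elementary argument suffices. One small remark: the bounds $0\leq j_1<\ldots<j_l\leq N$ in the statement are an artifact of the application later in the paper (where the factors are enumerated starting from $0$); your reading of the index set as the nonempty subsets of $\{1,\ldots,N\}$ is the intended one.
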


In our approach the next estimate will be fundamental.

\begin{prop} \label{super}
Let $1 < p< \infty$, $1 \le q \le \infty$ and $ s>n/q'$.  
Then there exists a positive constant $C$ such that
\[
\|e^{i u} -1\|_{M_{p,q}^{s}}  \leq   C \, \|u\|_{M_{p,q}^{s}} \,  \left( 1 +  \| u\|_{M_{p,q}^s}\right)^{(s+n/q)(1+ \frac{1}{s-n/q'})}  
\]
holds for all real-valued $u \in M^s_{p,q}$.
\end{prop}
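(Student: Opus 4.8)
The plan is to fix a radius $R>2$, to be optimised at the very end as $R\asymp(1+\|u\|_{M^s_{p,q}})^{1/(s-n/q')}$, and to split $u$ according to the phase--space decomposition underlying Proposition \ref{subalgebra1}. Writing $u_\flat:=\cfi[\chi_{P_R}\cf u]$ for the low--frequency part and $u_\epsilon:=\cfi[\chi_{P_R(\epsilon)}\cf u]$, $\epsilon\in\{0,1\}^n$, for the $2^n$ high--frequency octant parts (a smooth resolution of unity adapted to $P_R$ and the $P_R(\epsilon)$ may be used in place of the sharp cut--offs), one has $u=u_\flat+\sum_\epsilon u_\epsilon$ and hence $e^{iu}=e^{iu_\flat}\prod_\epsilon e^{iu_\epsilon}$. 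Setting $b_j:=e^{iu_j}-1$ for $j\in\{\flat\}\cup\{0,1\}^n$ and applying the algebraic identity of Lemma \ref{lma46brs} to these $2^n+1$ factors gives
\[
e^{iu}-1=\sum_{\emptyset\neq S\subseteq\{\flat\}\cup\{0,1\}^n}\ \prod_{j\in S}b_j .
\]
Since $s>n/q'$ the space $M^s_{p,q}$ is a multiplication algebra (Theorem \ref{mult2}), and iterating the Leibniz--type inequality of Lemma \ref{mult1} bounds each of the finitely many products by $\sum_{i\in S}\|b_i\|_{M^s_{p,q}}\prod_{j\in S\setminus\{i\}}\|b_j\|_{M^0_{\infty,1}}$, up to a constant depending only on $n$. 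Thus everything reduces to two scalar quantities per factor: the \emph{large} norm $\|b_j\|_{M^s_{p,q}}$ and the \emph{small} norm $\|b_j\|_{M^0_{\infty,1}}$.

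For the octant factors I would use the subalgebra property. As $\supp\cf u_\epsilon\subset P_R(\epsilon)$ and $P_R(\epsilon)$ is closed under addition, every power $u_\epsilon^m$ again has its spectrum in $P_R(\epsilon)$, so Proposition \ref{subalgebra2}(i) applies iteratively and yields $\|u_\epsilon^m\|_{M^s_{p,q}}\le(c\,(R-2)^{-(s-n/q')})^{m-1}\|u_\epsilon\|_{M^s_{p,q}}^m$. Summing the exponential series (which converges because the choice of $R$ makes $c\,(R-2)^{-(s-n/q')}\|u\|_{M^s_{p,q}}\le1$) gives the linear bound $\|b_\epsilon\|_{M^s_{p,q}}\lesssim\|u_\epsilon\|_{M^s_{p,q}}\le\|u\|_{M^s_{p,q}}$. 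The same computation in $M^0_{\infty,1}$, combined with $\|u_\epsilon\|_{M^0_{\infty,1}}\lesssim(R-1)^{-(s-n/q')}\|u\|_{M^s_{p,q}}$ (obtained from Nikol'skij's inequality, Lemma \ref{nikolskij}, and H\"older's inequality over $\{\|k\|_\infty>R-1\}$, which converges precisely because $s>n/q'$), shows that the small norms $\|b_\epsilon\|_{M^0_{\infty,1}}$ are bounded by an absolute constant for this $R$.

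The genuinely delicate point is the low--frequency factor $b_\flat=e^{iu_\flat}-1$, and this is where the exponent $s+n/q$ enters. Here $u_\flat$ is band--limited to $P_R$, so $u_\flat^m$ is band--limited to $P_{mR}$; writing $e^{iu_\flat}-1=\sum_{m\ge1}\frac{i^m}{m!}u_\flat^m$ and estimating each $\Box_k(u_\flat^m)$ by Young's inequality shows that $\|\Box_k(e^{iu_\flat}-1)\|_{L^p}$ is controlled by the tail $\sum_{m\gtrsim\|k\|_\infty/R}\frac{1}{m!}\|u_\flat\|_{L^\infty}^{m-1}\|u_\flat\|_{L^p}$. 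For $\|k\|_\infty\gg R\,\|u_\flat\|_{L^\infty}$ this tail is super--exponentially small, so the effective bandwidth of $e^{iu_\flat}-1$ is only $\widetilde R\asymp R\,\|u_\flat\|_{L^\infty}$. Counting the $\asymp\widetilde R^{\,n}$ non--vanishing blocks, each carrying a weight $\langle k\rangle^{s}\lesssim\widetilde R^{\,s}$ and satisfying $\|\Box_k(e^{iu_\flat}-1)\|_{L^p}\lesssim\|e^{iu_\flat}-1\|_{L^p}\le\|u_\flat\|_{L^p}$, yields
\[
\|b_\flat\|_{M^s_{p,q}}\lesssim\big(R\,\|u_\flat\|_{L^\infty}\big)^{s+n/q}\,\|u_\flat\|_{L^p}.
\]
I expect this band--limited step to be the main obstacle: a naive term--by--term application of the algebra property to $\sum\frac{1}{m!}u_\flat^m$ reconstructs a factor $e^{c\|u_\flat\|_{L^\infty}}$ and gives only an exponential bound, so it is essential to exploit that $|e^{iu_\flat}|\equiv1$ and that the spectrum of $e^{iu_\flat}$ really does concentrate in $P_{\widetilde R}$. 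Lemma \ref{bernstein} can be used to make the effective--bandwidth heuristic rigorous and to absorb the remaining high--frequency tail.

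Finally I would assemble the pieces. Both $\|u_\flat\|_{L^\infty}$ and $\|u_\flat\|_{L^p}$ are $\lesssim\|u\|_{M^s_{p,q}}$ (again because $\sum_{k}\langle k\rangle^{-sq'}<\infty$ when $s>n/q'$), so the low--frequency bound reads $\|b_\flat\|_{M^s_{p,q}}\lesssim R^{s+n/q}\|u\|_{M^s_{p,q}}^{\,1+s+n/q}$. Inserting $R\asymp(1+\|u\|_{M^s_{p,q}})^{1/(s-n/q')}$ turns $R^{s+n/q}$ into $(1+\|u\|_{M^s_{p,q}})^{(s+n/q)/(s-n/q')}$ and produces exactly
\[
\|b_\flat\|_{M^s_{p,q}}\lesssim\|u\|_{M^s_{p,q}}\,(1+\|u\|_{M^s_{p,q}})^{(s+n/q)\left(1+\frac{1}{s-n/q'}\right)} .
\]
It then remains to check that every other term of the finite sum from Lemma \ref{lma46brs} is of lower order: products not involving $b_\flat$ contribute at most $\|u\|_{M^s_{p,q}}$, while products in which $b_\flat$ enters only through its small norm $\|b_\flat\|_{M^0_{\infty,1}}$ cost $\|u\|_{M^s_{p,q}}$ times a power of $\|u\|_{M^s_{p,q}}$ whose exponent falls short of that of the leading low--frequency term by exactly $s-n/q'>0$. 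Collecting the $O(1)$ many terms then gives the asserted inequality.
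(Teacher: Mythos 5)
Your proof is correct and follows the same architecture as the paper's: the same phase--space splitting of $u$ into a low--frequency part with spectrum in $P_R$ and $2^n$ octant parts with spectrum in $P_R(\epsilon)$, the same algebraic identity (Lemma \ref{lma46brs}) applied to $e^{iu}=e^{iu_0}\prod_\epsilon e^{iu_\epsilon}$, the subalgebra property (Proposition \ref{subalgebra2}) to sum the exponential series of each octant factor, a block--counting/Taylor--truncation argument for the band--limited factor producing the loss $(R\,\|u\|)^{s+n/q}$, and the final calibration $(R-2)^{s-n/q'}\asymp 1+\|u\|_{M^s_{p,q}}$. Two points of divergence are worth recording. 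First, for the band--limited factor the paper truncates the Taylor series globally at $r\asymp\|u\|_{M^s_{p,q}}$ and kills the tail with Stirling's formula, whereas you use a $k$-dependent truncation (your ``effective bandwidth''); these are equivalent. Second, and more substantially: after applying Lemma \ref{lma46brs} the paper bounds each product simply by the product of the $M^s_{p,q}$-norms of its factors (see \eqref{ws-12}), so the term containing all $2^n$ octant factors contributes $\|u\|_{M^s_{p,q}}^{2^n}$; since $1+(s+n/q)\bigl(1+\tfrac{1}{s-n/q'}\bigr)$ can be as small as $(\sqrt n+1)^2+1$, which is less than $2^n$ once $n\ge 4$, that crude bound does not by itself deliver the stated exponent in all dimensions. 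Your bookkeeping --- measuring all but one factor of each product in the small norm $M^0_{\infty,1}$ via the Leibniz-type inequality of Lemma \ref{mult1}, and observing that $\|e^{iu_\epsilon}-1\|_{M^0_{\infty,1}}=O(1)$ because $\|u_\epsilon\|_{M^0_{\infty,1}}\lesssim (R-1)^{-(s-n/q')}\|u\|_{M^s_{p,q}}\lesssim 1$ for the chosen $R$ --- removes these spurious powers of $\|u\|_{M^s_{p,q}}$ and is exactly what is needed for the exponent to come out as claimed. So your proposal not only reproduces the paper's argument but tightens the one step the paper treats loosely; the only cosmetic caveat is that your parenthetical suggestion of smooth cut-offs must be handled with care (the octants need to stay closed under addition), which is why the paper, like your main line of argument, uses sharp cut-offs and hence needs $1<p<\infty$.
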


\begin{proof}
This proof follows  ideas developed  in \cite{brs}, but see also \cite{rrs}. 
\\
{\em Step 1.} Let $u$ be a nontrivial function in ${M}_{p,q}^s$ satisfying $\supp \cf u \subset P_R$ for some $R\geq 2$. \\
First  we consider the Taylor expansion
\[ 
e^{i u}-1 = \sum_{l=1}^r \frac{(i u)^l}{l!} + \sum_{l=r+1}^{\infty} \frac{(i u)^l}{l!} 
\]
resulting in the norm estimate
\[ 
\| e^{i u}-1\|_{{M}_{p,q}^s} \leq \Big\| \sum_{l=1}^r \frac{(i u)^l}{l!} 
\Big\|_{{M}_{p,q}^s} + \Big\| \sum_{l=r+1}^{\infty} \frac{(i u)^l}{l!} \Big\|_{{M}_{p,q}^s} \, . 
\]
For brevity we put 
\[
S_1:= \Big\| \sum_{l=1}^r \frac{(i u)^l}{l!} 
\Big\|_{{M}_{p,q}^s} \qquad \mbox{and}\qquad S_2 := \Big\| \sum_{l=r+1}^{\infty} \frac{(i u)^l}{l!} \Big\|_{{M}_{p,q}^s} \, .
\]
The natural number $r$ will be chosen later on.
Next we employ the algebra property, in particular the estimate \eqref{ws-42} with $C_1 := 2\, c_3 \, c_4$. We obtain
\[
S_2 \leq \sum_{l=r+1}^\infty \frac{1}{l!} \|u^l\|_{M_{p,q}^s} \leq \frac{1}{C_1} 
\sum_{l=r+1}^{\infty} \frac{ (C_1\, \|u\|_{M_{p,q}^s})^l}{l!}. 
\]
Now we choose $r$ as a function of $\|u\|_{M_{p,q}^s}$ and distinguish two cases:
\begin{enumerate}
\item $C_1\, \|u\|_{{M}_{p,q}^s}>1$. Assume that
\begin{equation}\label{ws-11b} 
3\, C_1\, \|u\|_{{M}_{p,q}^s} \leq r \leq 3\, C_1 \, \|u\|_{{M}_{p,q}^s} +1 
\end{equation}
and recall Stirling's formula $l! = \Gamma(l+1) \geq l^l e^{-l} \sqrt{2\pi l}$. Thus, we get
\begin{eqnarray*}
\sum_{l=r+1}^{\infty} \frac{(C_1\|u\|_{{M}_{p,q}^s})^l}{l!} & \leq &  
\sum_{l=r+1}^{\infty} \left( \frac{r}{l} \right)^l \left(\frac{e}{3}\right)^l \frac{1}{\sqrt{2\pi l}} 
\\
&\leq &   \sum_{l=r+1}^\infty \left(\frac{e}{3} \right)^l \leq  \frac{3}{3-e}.
\end{eqnarray*}
\item $C_1 \, \|u\|_{{M}_{p,q}^s}\leq 1$. It follows
\begin{equation*}
 \sum_{l=r+1}^{\infty} \frac{(C_1\, \|u\|_{{M}_{p,q}^s})^l}{l!} \leq  
C_1\, \|u\|_{{M}_{p,q}^s} \, \sum_{l=1}^{\infty} \frac{1}{l!}
\leq  C_1\,  e\,  \|u\|_{{M}_{p,q}^s}.
\end{equation*}
\end{enumerate}

Both together can be summarized as 
\begin{equation*} 
 S_2 \le C_2 \, \|u\|_{M_{p,q}^s}\, , \qquad C_2 := \max \Big(e, \frac{3}{C_1(3-e)}\Big).
\end{equation*}
To estimate $S_1$ we check the support of $\cf u^\ell $ and find
\begin{eqnarray*}
S_1 = \Big\| \sum_{l=1}^r \frac{(i u)^l}{l!} \Big\|_{M_{p,q}^{s}} & = & 
\Big( \sum_{k\in\Z} \langle k \rangle^{sq} \Big\|\Box_k \Big(\sum_{l=1}^{r} \frac{(i u)^l}{l!} \Big) 
\Big\|_{L_p}^q \Big)^{\frac{1}{q}} \\
& = & \Big( \sum_{\substack{k\in\Z, \\ -Rr-1<k_i< Rr+1, \\ i=1,\ldots,n}}  \langle k \rangle^{sq} 
\Big\|\Box_k \Big(\sum_{l=1}^{r} \frac{(i u)^l}{l!} \Big) \Big\|_{L_p}^q \Big)^{\frac{1}{q}} \\
& \leq & \Big( \sum_{\substack{k\in\Z, \\ -Rr-1<k_i< Rr+1, \\ i=1,\ldots,n}}  \langle k \rangle^{sq} 
\|\Box_k (e^{i u}-1) \|_{L_p}^q \Big)^{\frac{1}{q}} + S_2\, .
	\end{eqnarray*}
Concerning  $S_2$ we proceed as above. 
To estimate the first part we observe that
\[
C_3 := \sup_{k \in \Z} \, \| \, \sigma_k \, \|_{H^t} = \| \, \sigma_0 \, \|_{H^t} <\infty\, , 
\]
see Lemma \ref{bernstein}.
Furthermore,  
$\cos, \sin $ are Lipschitz continuous and consequently we get
\begin{eqnarray*}
\|\Box_k (e^{i u}-1) \|_{L_p} & \le &  C_3 \, \|e^{i u}-1 \|_{L_p} \\
& \le &   C_3 \, (\|\cos  u - \cos 0 \|_{L_p} + \|\sin  u - \sin 0 \|_{L_p})
\\
& \le &   2\, C_3 \, \| u - 0 \|_{L_p} \, .
\end{eqnarray*}
This implies
\begin{eqnarray*}
\Big( \sum_{\substack{k\in\Z, \\ -Rr-1<k_i< Rr+1, \\ i=1,\ldots,n}} \langle k \rangle^{sq} & & \hspace{-0.8cm}
\| \Box_k (e^{i u}-1) \|_{L_p}^q \Big)^{\frac{1}{q}} \\
& \le & 2\, C_3 \, \|\,  u \, \|_{L_p}\,  \Big( \sum_{\substack{k\in\Z, \\ -Rr-1<k_i< Rr+1, \\ i=1,\ldots,n}} \langle k \rangle^{sq}  
\Big)^{\frac{1}{q}} \, .
\end{eqnarray*}
Clearly, 
\begin{eqnarray*} 
\sum_{\substack{k\in\Z, \\ -Rr-1<k_i< Rr+1, \\ i=1,\ldots,n}} \langle k \rangle^{sq} 
& \le & \int_{\|\, x\, \|_\infty < Rr+1}\, \langle x \rangle^{sq}   \, dx
\nonumber
\\
& \le & \int_{|x|< \sqrt{n}(Rr+1)}\, \langle x \rangle^{sq}   \, dx
\nonumber
\\
& \le &    2\, \frac{\pi^{n/2}}{\Gamma (n/2)}\,  \int_{0}^{\sqrt{n} (Rr+1)} (1+\tau)^{n-1+sq}  \, d\tau
\nonumber
\\
& \le & 2\, \frac{\pi^{n/2}}{\Gamma (n/2)}\, \frac1{n+sq}\,  (\sqrt{n}(Rr+2))^{n+ sq}\,  \, . \hspace{1cm}
\end{eqnarray*}
To simplify notation we define
\[
C_4 := \Big(2\, \frac{\pi^{n/2}}{\Gamma (n/2)}\, \frac1{n+sq}\, \sqrt{n}^{n+sq}\Big)^{1/q}\, .
\]
In addition we shall use in case $1 < q \le \infty$
\[
\| u \|_{L^p}\le C_5 \, \|u\|_{{M}_{p,q}^{s}} \, , \qquad C_5 := \Big(\sum_{k\in \Z} \langle k \rangle^{-sq'} \Big)^{1/q'}
\]
which follows from H\"older's inequality and in case $q=1$
\[
\| u \|_{L^p}\le  \, \|u\|_{{M}_{p,1}^{s}} 
\]
as a consequence of triangle inequality.
Summarizing we have found
\[
\| e^{i u}-1\|_{{M}_{p,q}^s} \le \Big(2 \, C_2 + 2\, \max (C_5,1) \, C_4\,C_3\,  
(Rr+2)^{s+n/q}\Big)\,  \|u\|_{{M}_{p,q}^{s}} \, .
\]
Next we apply \eqref{ws-11b} which results in
\begin{equation} \label{expest}
\| e^{i u}-1 \|_{{M}_{p,q}^{s}} \leq C_6 \,  
\|u\|_{{M}_{p,q}^{s}} \left( 1 +  R \,  \| u\|_{M_{p,q}^s}\right)^{s+n/q} \, , 
\end{equation}
valid for all $u\in {M}_{p,q}^{s}$ satisfying $\supp \cf u \subset P_R$ and with positive constant $C_6$  not depending on $u$ and $R\ge 2$. 
\\
{\em Step 2.}	This time we consider  $u\in {M}_{p,q}^{s}$ without any restriction on the Fourier support. 
Here we need the restriction  $1< p< \infty$. For those $p$ the 
characteristic functions $\chi$ of cubes are Fourier multipliers in $L^p$ by the famous Riesz Theorem and therefore also in 
${M}_{p,q}^{s}$.
In addition we shall make use of the fact that the norm of the operator
$f \mapsto \cfi \chi\,  \cf f$ does not depend on the size of the cube.
Below we shall denote this norm by $C_7 = C_7 (p)$. 
We refer to Lizorkin \cite{Li} for all details.
For decomposing $u$ on the phase 
space we introduce functions $\chi_{R,\epsilon}$ and $\chi_R$, that is, the characteristic functions of the sets $P_R(\epsilon)$ and $P_R$, 
respectively. By defining
\begin{eqnarray*}
u_\epsilon (x) & = & \cfi [\chi_{R,\epsilon} (\xi) \, \cf u(\xi) ] (x) , \qquad x \in \R\, , 
\\
u_0 (x) & = & \cfi [\chi_R (\xi) \, \cf u (\xi) ] (x)\, , \qquad x \in \R\, , 
\end{eqnarray*}
we can rewrite $u$ as
\begin{equation} \label{urepr}
u(x) = u_0(x) + \sum_{\epsilon\in I} u_\epsilon (x) ,
\end{equation}
where $I$ is the set of all $\epsilon=(\epsilon_1,\ldots, \epsilon_n)$ with $\epsilon_j \in \{0,1\}$, $j=1,\ldots,n$. 
Hence
\begin{equation*} 
\| u\|_{{M}_{p,q}^{s}} \le  \| u_0\|_{{M}_{p,q}^{s}}  + 
\sum_{\epsilon\in I} \| u_\epsilon \|_{M_{p,q}^{s}}
\end{equation*}
and 
\[
\max \Big(\| u_0\|_{M_{p,q}^{s}},  \| u_\epsilon \|_{M_{p,q}^{s}}\Big) \le C_7 \, \| \, u\,  \|_{M_{p,q}^{s}}\, .
\] 
 Due to the  representation \eqref{urepr} and using an appropriate enumeration Lemma \ref{lma46brs} leads to
\[ 
e^{i u} -1 = \sum_{l=1}^{2^n +1} \sum_{0\leq j_1 <\ldots < j_l \leq 2^n} 
(e^{i u_{j_1}} -1)\cdot \ldots \cdot (e^{i u_{j_l}} -1) \, .
\]
The algebra property, in particular the estimate \eqref{ws-42} with $C_1 := 2\, c_3 \, c_4$, yields
\begin{equation}\label{ws-12}
\|e^{i u} -1\|_{M_{p,q}^{s}} \leq  \sum_{l=1}^{2^n +1} C^{l-1}_1\, 
\sum_{0\leq j_1 <\ldots < j_l \leq 2^n} \|e^{i u_{j_1}} -1 \|_{M_{p,q}^{s}} 
\cdot \ldots \cdot \| e^{i u_{j_l}} -1 \|_{M_{p,q}^{s}}. 
\end{equation}
By Proposition \ref{subalgebra2} and \eqref{expest} it follows
\begin{eqnarray}\label{ws-12b}
\| e^{i u_{j_k}} -1\|_{M_{p,q}^{s}} & = & \Big\|\sum_{l=1}^\infty \frac{(i u_{j_k})^l}{l!} \, \Big\|_{M_{p,q}^{s}}
\le \frac{R^{s-n/q'}}{c} \,  \Big(e^{c\, \|u_{j_k}\|_{M_{p,q}^{s}}/R^{s-n/q'} } -1 \Big) 
\nonumber 
\\
& \leq & \frac{(R-2)^{s-n/q'}}{c} \,  \Big(e^{c\, C_7 \, \|u\|_{M_{p,q}^{s}}/(R-2)^{s-n/q'} } -1 \Big) ,  
\end{eqnarray}
as well as
\begin{eqnarray}\label{ws-12c}
\| e^{i u_0}-1 \|_{{M}_{p,q}^{s}} \leq C_6 \, C_7\,   
\|u\|_{{M}_{p,q}^{s}} \left( 1 +  R \, C_7  \| u\|_{M_{p,q}^s}\right)^{s+n/q} \, ,
\end{eqnarray}
where we used the Fourier multiplier assertion mentioned at the beginning of this step.
The final step in our  proof is to choose the number $R$ 
as a function of $\|u\|_{M_{p,q}^{s}}$ such that \eqref{ws-12b} and \eqref{ws-12c} will be  approximately 
of the same size. 
\\
{\em Substep 2.1.} Let  $\|u\|_{M_{p,q}^{s}} \leq 1$. We choose $R=3$. 
Then \eqref{ws-12} combined with \eqref{ws-12b} and \eqref{ws-12c} results in the estimate 
\begin{equation*} 
 \|e^{i u} -1\|_{M_{p,q}^{s}} \leq C_8 \, \|u\|_{M_{p,q}^{s}} ,
\end{equation*}
where $C_8$ does not depend on $u$.\\
{\em Substep 2.2.} Let $\|u\|_{M_{p,q}^{s}} >1$. 
We choose $R\ge 3$ such that
\[ (R-2)^{s-n/q'} = \|u\|_{M_{p,q}^{s}} \, .\]
Now \eqref{ws-12},  combined with \eqref{ws-12b} and \eqref{ws-12c}, results in  
\begin{eqnarray}\label{ws-14}
 \|e^{i u} -1\|_{M_{p,q}^{s}} & \leq &  C_{9} \, \|u\|_{M_{p,q}^{s}} \,  \left( 1 +  \| u\|_{M_{p,q}^s}\right)^{(s+n/q)(1+ \frac{1}{s-n/q'})} \, , 
\,
\end{eqnarray}
with a constant $C_{9}$ independent of $u$.
\end{proof}

\begin{rem}
\rm
The restriction of $p$ to the interval $(1,\infty)$ is caused by  our decomposition technique, see Step 2 of the preceding proof.
We do not know whether Proposition \ref{super} extends to $p=1$ and/or $p = \infty$.
\end{rem}

Next, we need again a technical lemma.

\begin{lem} \label{ContExp}
Let $1 < p< \infty$, $1 \le q \le \infty$ and $ s>n/q'$.  
\\
{\rm (i)} The mapping $u \mapsto e^{iu}-1$
is locally Lipschitz continuous (considered as a mapping of $M_{p,q}^s$
into $M_{p,q}^s$). 
\\
{\rm (ii)} Assume $u\in M_{p,q}^s$ to be fixed and define a function 
$g: \re \mapsto M_{p,q}^s$ by $g(\xi) = e^{i u(x) \xi}-1$. Then the function $g$ is continuous.
\end{lem}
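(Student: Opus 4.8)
The plan is to derive both parts from the quantitative exponential estimate in Proposition \ref{super} together with the algebra property of $M^s_{p,q}$, which is available here since $s>n/q'$ (Theorem \ref{mult2}). For part (i), the starting point is the elementary factorization $e^{iu}-e^{iv}=e^{iv}\,(e^{i(u-v)}-1)$. Since the constant function $1$ does not belong to $M^s_{p,q}$, I would not estimate $e^{iv}$ directly; instead I write $e^{iv}=1+(e^{iv}-1)$ and split $e^{iu}-e^{iv}=(e^{i(u-v)}-1)+(e^{iv}-1)\,(e^{i(u-v)}-1)$, where each occurring factor lies in $M^s_{p,q}$ by Theorem \ref{algebra14} applied to the entire function $z\mapsto e^{iz}-1$, which vanishes at the origin.

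Fixing a ball $\|u\|_{M^s_{p,q}},\|v\|_{M^s_{p,q}}\le \varrho$, so that $\|u-v\|_{M^s_{p,q}}\le 2\varrho$, I would estimate the first summand by Proposition \ref{super}, which gives $\|e^{i(u-v)}-1\|_{M^s_{p,q}}\le C\,\|u-v\|_{M^s_{p,q}}\,(1+\|u-v\|_{M^s_{p,q}})^{\beta}\le C(1+2\varrho)^{\beta}\,\|u-v\|_{M^s_{p,q}}$, with exponent $\beta=(s+n/q)(1+\tfrac{1}{s-n/q'})$; the decisive point is that this bound is \emph{linear} in $\|u-v\|_{M^s_{p,q}}$ because the factor $e^{i(u-v)}-1$ vanishes with its argument. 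For the product term I would apply the algebra inequality and then Proposition \ref{super} once more, bounding $\|e^{iv}-1\|_{M^s_{p,q}}\le C\varrho(1+\varrho)^{\beta}=:K(\varrho)$ and treating the remaining factor again linearly in $\|u-v\|_{M^s_{p,q}}$. Collecting terms yields $\|e^{iu}-e^{iv}\|_{M^s_{p,q}}\le L(\varrho)\,\|u-v\|_{M^s_{p,q}}$ with $L(\varrho)=C(1+2\varrho)^{\beta}(1+c\,K(\varrho))$, which is exactly local Lipschitz continuity.

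Part (ii) then follows as a short corollary. The map $\lambda:\re\to M^s_{p,q}$, $\lambda(\xi)=\xi u$, is linear with $\|\lambda(\xi)-\lambda(\xi_0)\|_{M^s_{p,q}}=|\xi-\xi_0|\,\|u\|_{M^s_{p,q}}$, hence continuous and bounded on bounded intervals of $\xi$, and $g(\xi)=e^{i\xi u}-1=(\Phi\circ\lambda)(\xi)$, where $\Phi(w)=e^{iw}-1$ is the locally Lipschitz map from part (i). Composing, for $|\xi|,|\xi_0|\le R$ I obtain $\|g(\xi)-g(\xi_0)\|_{M^s_{p,q}}\le L\big(R\|u\|_{M^s_{p,q}}\big)\,|\xi-\xi_0|\,\|u\|_{M^s_{p,q}}$, which tends to $0$ as $\xi\to\xi_0$. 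The only genuine obstacle is the bookkeeping around the non-membership of $1$ in $M^s_{p,q}$ in part (i); once the factorization is arranged so that every factor sits in the space and the exponential factor $e^{i(u-v)}-1$ is isolated, so that its $M^s_{p,q}$-norm can be taken linearly in $\|u-v\|_{M^s_{p,q}}$, the remaining estimates are routine applications of results already proved.
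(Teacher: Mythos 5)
Your proof is correct and follows essentially the same route as the paper: the key identity $e^{iu}-e^{iv}=(e^{iv}-1)(e^{i(u-v)}-1)+(e^{i(u-v)}-1)$ is exactly the paper's equation \eqref{ws-15}, and both arguments then combine the algebra property with Proposition \ref{super} (whose bound is linear in the norm of the argument on bounded sets), with part (ii) reduced to part (i) via $\xi\mapsto \xi u$. Your write-up is in fact more detailed than the paper's two-line proof, but the substance is identical.
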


\begin{proof}
Local Lipschitz continuity follows from the identity
\begin{equation}\label{ws-15}
e^{iu}- e^{iv} = (e^{iv}-1)\, (e^{i(u-v)}-1) + (e^{i(u-v)}-1)\, ,
\end{equation}
the algebra property of $M_{p,q}^s$ and Proposition \ref{super}.
\\
To prove the continuity of $g$ we also employ the identity \eqref{ws-15}.
The claim follows by using the algebra property and Proposition \ref{super}.
\end{proof}
	
Now we are in position to prove the main result of this section.

\begin{thm} \label{superposition}
Let   $1< p < \infty$, $1\le q \le  \infty$ and $s>n/q'$. 
Let  $\mu$ be a complex measure on $\re$ such that
\begin{equation} \label{FourierEst}
L := \int_{-\infty}^\infty \, (1+|\xi|)^{1+(s+n/q)(1+ \frac{1}{s-n/q'})} \, d|\mu| (\xi) < \infty
\end{equation}
and such that $\mu(\re) = 0$. 
Furthermore, assume that the function $f$ is the inverse Fourier transform of $\mu$. 
Then $f$ is a continuous function and the composition operator $T_f: u \mapsto f \circ u$ maps $M_{p,q}^s$ into $M_{p,q}^s$.
\end{thm}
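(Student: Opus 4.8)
The plan is to represent $f\circ u$ as a vector-valued integral over $\re$ of the elementary superpositions $e^{-i\xi u}-1$ against $\mu$, and then to control that integral by means of Proposition \ref{super}. First I would record the preliminary reductions. Since the integrand in \eqref{FourierEst} is bounded below by $1$, the hypothesis forces $|\mu|(\re)\le L<\infty$, so $\mu$ is a finite complex measure and $f=\cfi \mu$ is given by $f(t)=(2\pi)^{-1/2}\int_{\re} e^{-it\xi}\,d\mu(\xi)$; dominated convergence shows at once that $f$ is bounded and continuous. Because $s>n/q'$, Corollary \ref{einbettung2} yields $M^s_{p,q}\hookrightarrow C_{ub}$, so every $u\in M^s_{p,q}$ is a bounded (here real-valued) continuous function and $f\circ u$ is a well-defined bounded continuous function. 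Writing $c:=(2\pi)^{-1/2}$ and using $\mu(\re)=0$ to subtract the constant under the integral, I obtain the pointwise identity
\[ f(u(x))=c\int_{\re}\big(e^{-i\xi u(x)}-1\big)\,d\mu(\xi)\, , \qquad x\in\R\, . \]

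Next I would set up the $M^s_{p,q}$-valued integral. View $\xi\mapsto g_\xi:=e^{-i\xi u}-1$ as a map from $\re$ into $M^s_{p,q}$; by Lemma \ref{ContExp}(ii) (the sign of the exponent being immaterial) this map is continuous, hence strongly measurable. Applying Proposition \ref{super} to $v=-\xi u$, for which $\|v\|_{M^s_{p,q}}=|\xi|\,\|u\|_{M^s_{p,q}}$, and abbreviating $\kappa:=(s+n/q)\big(1+\tfrac{1}{s-n/q'}\big)$, I get
\[ \|g_\xi\|_{M^s_{p,q}}\le C\,|\xi|\,\|u\|_{M^s_{p,q}}\,\big(1+|\xi|\,\|u\|_{M^s_{p,q}}\big)^{\kappa}\, . \]
Since $1+|\xi|\,\|u\|_{M^s_{p,q}}\le (1+|\xi|)\,(1+\|u\|_{M^s_{p,q}})$ and $|\xi|(1+|\xi|)^{\kappa}\le(1+|\xi|)^{1+\kappa}$, integration against $d|\mu|$ gives
\[ \int_{\re}\|g_\xi\|_{M^s_{p,q}}\,d|\mu|(\xi)\le C\,\|u\|_{M^s_{p,q}}\,\big(1+\|u\|_{M^s_{p,q}}\big)^{\kappa}\int_{\re}(1+|\xi|)^{1+\kappa}\,d|\mu|(\xi)=C\,L\,\|u\|_{M^s_{p,q}}\,\big(1+\|u\|_{M^s_{p,q}}\big)^{\kappa}\, , \]
the last integral being exactly $L$ by \eqref{FourierEst}. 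As this is finite, the Bochner integral $G:=c\int_{\re}g_\xi\,d\mu(\xi)$ exists as an element of $M^s_{p,q}$ and satisfies the same bound.

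Finally I would identify $G$ with $f\circ u$. Because $M^s_{p,q}\hookrightarrow C_{ub}\hookrightarrow L_\infty$, each point evaluation $h\mapsto h(x)$ is a continuous linear functional on $M^s_{p,q}$; since a Bochner integral commutes with bounded functionals, $G(x)=c\int_{\re}g_\xi(x)\,d\mu(\xi)=c\int_{\re}(e^{-i\xi u(x)}-1)\,d\mu(\xi)=f(u(x))$ for every $x$. Hence $f\circ u=G\in M^s_{p,q}$, and $T_f$ maps $M^s_{p,q}$ into itself with $\|f\circ u\|_{M^s_{p,q}}\le C\,L\,\|u\|_{M^s_{p,q}}(1+\|u\|_{M^s_{p,q}})^{\kappa}$.

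The step I expect to be the main obstacle is the rigorous handling of the vector-valued integral: one must verify that $\xi\mapsto e^{-i\xi u}-1$ is Bochner integrable and that the resulting element of $M^s_{p,q}$ agrees with the pointwise function $f\circ u$. Continuity of the integrand (Lemma \ref{ContExp}(ii)) supplies measurability, the estimate of Proposition \ref{super} together with \eqref{FourierEst} supplies integrability of the norm, and the embedding into $C_{ub}$ lets me commute the integral with point evaluation to recover $f\circ u$. If one prefers to avoid Bochner-integration machinery, the same conclusion follows by discretizing $\mu$, applying the triangle inequality and Proposition \ref{super} to the resulting finite sums, and passing to the limit by means of the Fatou property of Lemma \ref{basic}(iv), the continuity in Lemma \ref{ContExp}(ii) ensuring convergence of the approximating sums.
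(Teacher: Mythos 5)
Your proposal is correct and follows essentially the same route as the paper's proof: both subtract the constant via $\mu(\re)=0$, form the $M^s_{p,q}$-valued Bochner integral of $\xi\mapsto e^{i\xi u}-1$ (measurable by Lemma \ref{ContExp}(ii)), and bound its norm by combining Proposition \ref{super} with the moment condition \eqref{FourierEst}. The only differences are cosmetic — you integrate against the complex measure directly rather than splitting it into $\mu_r^{\pm},\mu_i^{\pm}$, and you make explicit the identification of the Bochner integral with the pointwise composition via point evaluations, a step the paper leaves implicit.
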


\begin{proof}
Equation \eqref{FourierEst} yields $\int_{\R} d|\mu|(\xi) < \infty$. Thus, $\mu$ is a finite measure and $\mu(\re) =0$ 
makes sense. Now we define the inverse Fourier transform of $\mu$
		\[ f(t) = \frac{1}{\sqrt{2\pi}} \int_{\R} e^{i \xi t} \, d \mu(\xi). \]
Moreover, since 
\[
(s+n/q)\Big(1+ \frac{1}{s-n/q'}\Big) >n
\]		
we conclude that $\int_{\re} |(i \xi)^j| \, d|\mu|(\xi) <\infty$, $j=1, \ldots\, , n+1 $, which implies 
$f \in C^{n+1}$. Due to $\mu(\re)=0$ we can also write $f$ as follows:
\[ 
f(t) = \frac{1}{\sqrt{2\pi}} \int_{\re} (e^{i \xi t}-1) \, d \mu(\xi). 
\]
Since $\mu$ is a complex measure we can split it up into real part $\mu_r$ and imaginary part $\mu_i$, 
where each of them is a signed measure. Without loss of generality we proceed our computations only with the positive 
real measure $\mu_r^+$. For all measurable sets $E$ we have $\mu_r^+(E) \leq |\mu|(E)$. 
\\
Let $u\in M_{p,q}^s$ and define the function $g(\xi) = e^{i u(x) \xi} -1$ analogously to 
Lemma \ref{ContExp}. Then $g$ is Bochner integrable because of its continuity and taking into account that the measure $\mu_r^+$ is finite. 
Therefore we obtain the Bochner integral
\[ 
\int_{-\infty}^\infty \big(e^{i u(x) \xi} -1 \big)\, d\mu_r^+(\xi) = \int_{-\infty}^\infty g(\xi) \, d\mu_r^+(\xi) 
\]
with values in $M_{p,q}^s$. By applying Minkowski inequality it follows
\[ 
\Big\| \int_{-\infty}^\infty \big(e^{i u(\cdot) \xi} -1 \big)\, d\mu_r^+(\xi) \Big\|_{M_{p,q}^s} 
\leq \int_{-\infty}^\infty \big\| e^{i u(\cdot) \xi} -1 \big\|_{M_{p,q}^s} \, d|\mu|(\xi). 
\]
Using the abbreviation $\| u\|:= \|u\|_{M_{p,q}^s}$,	Proposition \ref{super} together with equation \eqref{FourierEst} yields
\begin{eqnarray*}
\int_{|\xi| \|u\|\geq 1} && \hspace{-0.9cm} \big\| e^{i u(\cdot) \xi} -1 \big\|_{M_{p,q}^s} \, d|\mu|(\xi) 
\\
& \leq & 
C'\,  \| \, u\, \|_{M_{p,q}^s}^{1+(s+n/q)(1+ \frac{1}{s-n/q'})} \, \int_{|\xi| \|u\|\geq 1} | \xi|^{1+ (s+n/q)(1+ \frac{1}{s-n/q'})} \, d|\mu|(\xi) \\
				& < & \infty.
		\end{eqnarray*}
In a similar way also the remaining part  $|\xi| \le 1/ \|u\|$ of the integral can be treated. \\
The same estimates also hold for the measures $\mu_r^-$, $\mu_i^+$ and $\mu_i^-$. Thus, the result is obtained by
\begin{eqnarray*}
&& \hspace{-0.8cm} \|\sqrt{2\pi} f(u(x))\|_{M_{p,q}^s} 
\\
& = & 
\Big\| \int_{-\infty}^\infty  g(\xi) \, d\mu_r^+ - \int_{-\infty}^\infty  g(\xi) \, d\mu_r^-  + i \int_{-\infty}^\infty  g(\xi) \, d\mu_i^+ 
- i \int_{-\infty}^\infty  g(\xi) \, d\mu_i^- \Big\|_{M_{p,q}^s} 
\\
& \leq &  \int_{-\infty}^\infty  \| g(\xi) \|_{M_{p,q}^s} \, d|\mu_r^+|  + \int_{-\infty}^\infty  \| g(\xi) \|_{M_{p,q}^s}\, d|\mu_r^-| \\
& & \qquad + \int_{-\infty}^\infty  \| g(\xi)\|_{M_{p,q}^s} \, d|\mu_i^+| + \int_{-\infty}^\infty  \| g(\xi) \|_{M_{p,q}^s} \, d|\mu_i^-|
\, ,
\end{eqnarray*}
where every integral on the right-hand side is finite. Thus, the statement is proved.
\end{proof}

A bit more transparent sufficient conditions can be obtained by using Szasz theorem, see Peetre \cite[pp.9-11]{peetre} and \cite[Prop.~1.7.5]{ST}.
By $B^{s}_{p,q}(\re)$ we denote the Besov spaces on $\re$, see , e.g., \cite{triebel} or \cite{RS} for details.

\begin{lem}\label{szasz}
Let $t\ge 0$ and suppose  $f \in B^{t+1/2}_{2,1}(\re)$.
Then the Fourier transform of $f$ is a regular distribution
and
\[
\int_{-\infty}^\infty (1+|\xi|^2)^{t/2} |\cf f (\xi)|\, d\xi \le c\, \| f\|_{B^{t+1/2}_{2,1}(\re)}
\]
follows with some $c$ independent of $f$. 
\end{lem}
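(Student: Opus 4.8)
The plan is to use the dyadic (Littlewood--Paley) characterization of the one--dimensional Besov space $B^{t+1/2}_{2,1}(\re)$ and to trade the smoothness index against the Lebesgue measure of the dyadic annuli by means of the Cauchy--Schwarz inequality. Concretely, I would fix a smooth dyadic partition of unity $(\varphi_j)_{j=0}^\infty$ on $\re$, with $\varphi_0$ supported in $\{|\xi|\le 2\}$ and $\varphi_j$ supported in the annulus $A_j := \{2^{j-1}\le |\xi|\le 2^{j+1}\}$ for $j\ge 1$, satisfying $\sum_{j\ge0}\varphi_j\equiv 1$ and $\varphi_j\ge 0$. Then, by definition,
\[
\|f\|_{B^{t+1/2}_{2,1}(\re)} \asymp \sum_{j=0}^\infty 2^{j(t+1/2)}\, \big\|\cfi[\varphi_j\,\cf f]\big\|_{L_2}\, .
\]

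Next I would decompose $\cf f = \sum_{j\ge0}\varphi_j\,\cf f$ and estimate the weighted integral blockwise. Since $\varphi_j\ge 0$ and $\sum_j\varphi_j\equiv1$, one has the identity $(1+|\xi|^2)^{t/2}|\cf f(\xi)| = \sum_{j}(1+|\xi|^2)^{t/2}\varphi_j(\xi)|\cf f(\xi)|$, and on $\supp\varphi_j$ the weight satisfies $(1+|\xi|^2)^{t/2}\ls 2^{jt}$, whence
\[
\int_{-\infty}^\infty (1+|\xi|^2)^{t/2}\,|\cf f(\xi)|\, d\xi \ls \sum_{j=0}^\infty 2^{jt}\int \varphi_j(\xi)\,|\cf f(\xi)|\, d\xi\, .
\]
The crucial step is to apply the Cauchy--Schwarz inequality to each summand, exploiting that $|A_j|\asymp 2^j$:
\[
\int \varphi_j(\xi)\,|\cf f(\xi)|\, d\xi \le |\supp\varphi_j|^{1/2}\,\Big(\int \varphi_j(\xi)^2|\cf f(\xi)|^2\,d\xi\Big)^{1/2}\ls 2^{j/2}\,\big\|\cfi[\varphi_j\,\cf f]\big\|_{L_2}\, ,
\]
where the final identity is Plancherel's theorem. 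Inserting this bound and summing yields
\[
\int_{-\infty}^\infty (1+|\xi|^2)^{t/2}\,|\cf f(\xi)|\, d\xi \ls \sum_{j=0}^\infty 2^{j(t+1/2)}\big\|\cfi[\varphi_j\,\cf f]\big\|_{L_2}\asymp \|f\|_{B^{t+1/2}_{2,1}(\re)}\, ,
\]
which is precisely the claimed inequality.

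It remains to justify that $\cf f$ is a regular distribution, and here lies the only point requiring some care. Each block $\varphi_j\,\cf f$ is a compactly supported $L_2$--function, hence lies in $L_1$; the bound just derived shows that $\sum_{j}\int(1+|\xi|^2)^{t/2}|\varphi_j\,\cf f|\,d\xi<\infty$, so the partial sums $\sum_{j\le N}\varphi_j\,\cf f$ form a Cauchy sequence in the weighted space $L_1\big((1+|\xi|^2)^{t/2}\,d\xi\big)$. Their limit is an honest $L_1$--function which must coincide with the tempered distribution $\cf f$, since the partial sums also converge to $\cf f$ in $\cs'$. Thus $\cf f$ is regular and the weighted integral is finite. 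The whole argument is the one--dimensional Szász theorem; the main (and essentially only) obstacle is the bookkeeping that matches the annulus measure $2^{j}$ against the extra $1/2$ of smoothness, everything else being a routine application of Cauchy--Schwarz, Plancherel and dominated convergence.
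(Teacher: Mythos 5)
Your argument is correct and complete: the dyadic decomposition, the bound $(1+|\xi|^2)^{t/2}\ls 2^{jt}$ on $\supp\varphi_j$ (using $t\ge 0$), Cauchy--Schwarz against the annulus measure $\asymp 2^j$, and Plancherel combine exactly to produce the exponent $t+1/2$, and the weighted-$L_1$ Cauchy-sequence argument properly settles the regularity of $\cf f$. The paper itself gives no proof of this lemma --- it only cites Peetre and Schmeisser--Triebel --- and the argument you supply is precisely the classical one found in those references, so you have in effect filled in the omitted standard proof.
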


Based on Lemma \ref{szasz} and Theorem \ref{superposition} one obtains the next result.

\begin{cor}\label{szasz2}
Let   $1< p < \infty$, $1\le q \le  \infty$ and $s>n/q'$. 
Let   $f \in B^{t}_{2,1}(\re)$ for some 
\[
 t\ge \frac 32 + (s+n/q)\, \Big(1+ \frac{1}{s-n/q'}\Big)
\]
and suppose $f(0)=0$.
Then the composition operator $T_f: u \mapsto f \circ u$ maps real-valued functions in $M_{p,q}^s$ boundedly into $M_{p,q}^s$.
\end{cor}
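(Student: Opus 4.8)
The plan is to reduce Corollary~\ref{szasz2} to Theorem~\ref{superposition} by manufacturing, from the Besov regularity of $f$, a complex measure $\mu$ that satisfies the hypotheses of that theorem. Set $\kappa := (s+n/q)\bigl(1+ \tfrac{1}{s-n/q'}\bigr)$, so that the standing assumption reads $t \ge \tfrac 32 + \kappa$, while the integrability condition \eqref{FourierEst} in Theorem~\ref{superposition} demands a finite $|\mu|$-moment of order $1+\kappa$.

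First I would apply Lemma~\ref{szasz} with parameter $\tau := t - \tfrac 12$. Since $f \in B^t_{2,1} = B^{\tau + 1/2}_{2,1}$, the lemma guarantees that $\cf f$ is a regular distribution, i.e.\ an honest locally integrable function, and that
\[
\int_{-\infty}^\infty (1+|\xi|^2)^{\tau/2}\, |\cf f(\xi)|\, d\xi \ls \|f\|_{B^t_{2,1}} < \infty.
\]
Because $t \ge \tfrac 32 + \kappa$ we have $\tau \ge 1+\kappa \ge 0$, and since $(1+|\xi|) \asymp (1+|\xi|^2)^{1/2} \ge 1$ the moment of order $1+\kappa$ is dominated by the moment of order $\tau$; in particular $\cf f \in L_1$ and $\int (1+|\xi|)^{1+\kappa}|\cf f(\xi)|\,d\xi < \infty$.

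Next I would define $\mu$ by $d\mu(\xi) := \cf f(\xi)\, d\xi$ (reflecting $\xi \mapsto -\xi$ if needed so that $f = \cfi \mu$ in the normalization used in Theorem~\ref{superposition}; the even weight and the total mass are unaffected by this reflection). Then $|\mu|(E) = \int_E |\cf f(\xi)|\, d\xi$, so the previous display is precisely the finiteness of $L$ in \eqref{FourierEst}. The condition $\mu(\re)=0$ is supplied by the hypothesis $f(0)=0$: since $\cf f \in L_1$, the function $f$ is continuous with $f(0) = \tfrac{1}{\sqrt{2\pi}} \int_{\re} d\mu(\xi) = \tfrac{1}{\sqrt{2\pi}}\, \mu(\re)$, whence $f(0)=0$ forces $\mu(\re)=0$. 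With both hypotheses of Theorem~\ref{superposition} verified, that theorem yields that $T_f$ maps real-valued $u \in M^s_{p,q}$ boundedly into $M^s_{p,q}$, which is the assertion.

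The only genuinely delicate point is the matching of exponents: one must check that the loss of $\tfrac 12$ derivative in Szász' theorem (Lemma~\ref{szasz}) is exactly absorbed by the $\tfrac 32$ (rather than $1$) appearing in the lower bound on $t$, and that the polynomial weight $(1+|\xi|^2)^{\tau/2}$ controls the weight $(1+|\xi|)^{1+\kappa}$ required in \eqref{FourierEst}. Everything else — identifying $\cf f$ with an $L_1$ density, the relation $f(0)=\mu(\re)/\sqrt{2\pi}$, and the role of real-valuedness, which is what allows the representation $f(u(x)) = \tfrac{1}{\sqrt{2\pi}}\int (e^{iu(x)\xi}-1)\,d\mu(\xi)$ to feed into the exponential estimate of Proposition~\ref{super} — is routine once $\mu$ is in place.
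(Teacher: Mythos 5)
Your proposal is correct and follows exactly the route the paper takes: the paper's own proof consists of the single sentence that the claim ``follows from Proposition \ref{super}, the proof of Theorem \ref{superposition} and Lemma \ref{szasz}'', and you have simply filled in the details --- taking $d\mu(\xi)=\cf f(\xi)\,d\xi$, using Lemma \ref{szasz} with $\tau=t-\tfrac12$ to verify \eqref{FourierEst}, and deducing $\mu(\re)=0$ from $f(0)=0$. The exponent bookkeeping (the $\tfrac12$ loss in Sz\'asz' theorem absorbed by the $\tfrac32$ in the hypothesis on $t$) is checked correctly.
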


\begin{proof}
Boundedness of $T_f$ follows  from Proposition \ref{super}, the proof of Theorem \ref{superposition} and Lemma \ref{szasz}. 
\end{proof}

\begin{rem}
 \rm
Let $t>0$ be given. 
A function $f:~ \re \to \re$, $m$-times continuously differentiable, compactly supported and  satisfying $f^m \in {\rm Lip}\, \alpha$ for some $\alpha \in (0,1]$,   belongs to  $ B^{t}_{2,1}(\re)$ if  $t< m+\alpha$.
\end{rem}

\subsection*{One example}

Ruzhansky, Sugimoto and Wang \cite{ruzhansky} suggested  to study the operator $T_\alpha$ associated to $f_\alpha (t):= t\, |t|^\alpha$, $t\in \re$, with $\alpha >0$.
This function belongs locally to the Besov space $B^{\alpha+1+1/p}_{p,\infty}(\re)$, $1 \le p \le \infty$, see \cite[Lemma~2.3.1/1]{RS} for a related case.
Let $\psi \in C_0^\infty (\re)$ be a smooth cut-off function such that $\psi (x)=1$ if $|x|\le 1$.
Then the function
\[
\tilde{f}_{\alpha, \lambda} (t) := \psi (t/\lambda) \cdot f_\alpha (t), \qquad t \in \re \, , 
\]
belongs to $B^{\alpha+1+1/p}_{p,\infty}$ for any $p$,  $1 \le p \le \infty$, and any $\lambda >0$.  
Applying Corollary \ref{szasz2} and
\[
u(x)\, |u(x)|^\alpha = \tilde{f}_{\alpha, \lambda} (u(x))\, , \qquad x \in \R\, , \quad \lambda := \|u\|_{L_\infty}\, , 
\]
we find the following.

\begin{cor} \label{beispiel}
Let   $1< p < \infty$, $1\le q \le  \infty$ and $s>n/q'$. 
Let $\alpha$ be a positive real number such that 
\[
(s+n/q)\Big(1+ \frac{1}{s-n/q'}\Big) < \alpha \, .
\]
Then the composition operator $T_\alpha :~ u \mapsto u \, |u|^\alpha $ maps real-valued functions in $M_{p,q}^s$ boundedly into $M_{p,q}^s$.
\end{cor}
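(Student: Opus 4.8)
The plan is to reduce the claim to Corollary \ref{szasz2} by replacing the globally singular profile $f_\alpha(t) = t\,|t|^\alpha$ with its compactly supported truncation $\tilde{f}_{\alpha,\lambda}$, exploiting that on the range of any fixed $u$ the two profiles coincide. First I would record that, since $s>n/q'$, Corollary \ref{einbettung2} gives $M^s_{p,q}\hookrightarrow L_\infty$; hence for a fixed nontrivial real-valued $u\in M^s_{p,q}$ the number $\lambda := \|u\|_{L_\infty}$ is finite and positive. Because $\psi\equiv 1$ on $[-1,1]$ and $|u(x)|\le\lambda$ for every $x$, we have $\psi(u(x)/\lambda)=1$, so that
\[
\tilde{f}_{\alpha,\lambda}(u(x)) = \psi(u(x)/\lambda)\, u(x)\,|u(x)|^\alpha = u(x)\,|u(x)|^\alpha = (T_\alpha u)(x), \qquad x\in\R .
\]
Thus $T_\alpha u = T_{\tilde{f}_{\alpha,\lambda}}u$, and it suffices to control the right-hand side. (The case $u\equiv 0$ is trivial since $T_\alpha 0 = 0$.)

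Next I would verify that $\tilde{f}_{\alpha,\lambda}$ satisfies the hypotheses of Corollary \ref{szasz2}. It satisfies $\tilde{f}_{\alpha,\lambda}(0)=0$ and, being compactly supported, belongs to $B^{\alpha+1+1/p}_{2,\infty}(\re)$; taking the Besov integrability exponent to be $2$ gives $\tilde{f}_{\alpha,\lambda}\in B^{\alpha+3/2}_{2,\infty}(\re)$, and the elementary embedding $B^{\alpha+3/2}_{2,\infty}(\re)\hookrightarrow B^t_{2,1}(\re)$, valid whenever $t<\alpha+3/2$ (sum a geometric series over the dyadic blocks), yields $\tilde{f}_{\alpha,\lambda}\in B^t_{2,1}(\re)$ for all $t<\alpha+3/2$. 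The hypothesis $(s+n/q)\bigl(1+\tfrac{1}{s-n/q'}\bigr)<\alpha$ is exactly the assertion that
\[
\frac{3}{2} + (s+n/q)\Big(1+\frac{1}{s-n/q'}\Big) < \frac{3}{2} + \alpha ,
\]
so I may select $t = \frac{3}{2} + (s+n/q)(1+\frac{1}{s-n/q'})$, which simultaneously meets the lower bound demanded by Corollary \ref{szasz2} and the membership constraint $t<\alpha+3/2$. Applying Corollary \ref{szasz2} to $f=\tilde{f}_{\alpha,\lambda}$ then gives $T_{\tilde{f}_{\alpha,\lambda}}u = T_\alpha u\in M^s_{p,q}$, which establishes well-definedness.

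Finally, to upgrade this to the boundedness assertion (bounded sets to bounded sets), I would track the $u$-dependence of the truncation. The exponent $t$ is fixed once and for all by $s,q,n$, so only the profile $\tilde{f}_{\alpha,\lambda}$ varies with $u$ through $\lambda$. By the scaling identity $\tilde{f}_{\alpha,\lambda}(t) = \lambda^{1+\alpha}\,\tilde{f}_{\alpha,1}(t/\lambda)$ and the dilation behaviour of the Besov norm, $\|\tilde{f}_{\alpha,\lambda}\|_{B^t_{2,1}(\re)}$ grows only polynomially in $\lambda$; since on any bounded subset of $M^s_{p,q}$ one has $\lambda=\|u\|_{L_\infty}\le c\,\|u\|_{M^s_{p,q}}$ bounded, the constant $L$ in \eqref{FourierEst}, and hence the bound furnished through Proposition \ref{super} and Theorem \ref{superposition}, stays uniformly bounded. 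The main obstacle is precisely this last step: because the admissible truncation radius is dictated by $\|u\|_{L_\infty}$, the profile fed into Corollary \ref{szasz2} changes with $u$, and one must confirm via the scaling identity that neither the membership index $t$ nor the associated Besov and Fourier constants degenerate as $\|u\|_{M^s_{p,q}}$ ranges over a bounded set.
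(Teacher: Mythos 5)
Your proposal is correct and follows essentially the same route as the paper: truncate $f_\alpha$ at scale $\lambda=\|u\|_{L_\infty}$, use that the compactly supported truncation lies in $B^{\alpha+3/2}_{2,\infty}(\re)\hookrightarrow B^{t}_{2,1}(\re)$ for $t<\alpha+3/2$, and apply Corollary \ref{szasz2}, with the hypothesis on $\alpha$ being exactly what makes the required $t$ admissible. Your additional tracking of the $\lambda$-dependence via the scaling identity, to confirm uniform boundedness on bounded sets, is a useful clarification the paper leaves implicit.
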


\subsection*{The special case $p=q=2$}

Finally we will have a look onto the special case $M^s_{2,2} = H^s$, $s>n/2$.
In Bourdaud, Moussai, S. \cite{bms} the set of functions $f$ such that $T_f: ~ g \mapsto f \circ g $ maps $H^s$ into itself
has been characterized.

\begin{prop}\label{compose}
Let  $s> \frac 12 \,  \max (n,3)$.
For a Borel measurable function $f:\, \re \to \re$ the
composition operator $T_f$ acts  on
 $H^{s}$  if and only if $f(0)=0$ and $f\in H^{s,\ell oc} (\re)$.
\end{prop}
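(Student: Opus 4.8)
The plan is to prove the two directions separately, disposing of necessity by explicit test functions and concentrating the effort on sufficiency, whose top-order estimate is the real obstacle.

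For necessity I would argue as follows. Applying $T_f$ to $g\equiv 0\in H^s$ produces the constant function $f(0)$; since a nonzero constant is not in $H^s(\R)$, this forces $f(0)=0$. To read off the local regularity, fix $a\in\re$ and a cut-off $\psi\in C_0^\infty$ with $\psi\equiv 1$ on a neighbourhood $U$ of the origin, and set $g(x):=\psi(x)(a+x_1)$; this is a real-valued element of $C_0^\infty\subset H^s$ equal to $a+x_1$ on $U$, so $(f\circ g)(x)=f(a+x_1)$ there. Multiplying $f\circ g\in H^s$ by a tensorized cut-off $\theta_1(x_1)\theta'(x')$ supported in $U$ gives $\theta_1(x_1)f(a+x_1)\,\theta'(x')\in H^s(\R)$; bounding the defining integral from below by $(1+|\xi_1|^2)^{s}$ and integrating out $\xi'$ then yields $\theta_1(\cdot)f(a+\cdot)\in H^s(\re)$. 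As $a$ is arbitrary, $f\in H^{s,\ell oc}(\re)$ follows.

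For sufficiency I would first reduce to a compactly supported symbol. Given a real-valued $g\in H^s$, the embedding $H^s\hookrightarrow L_\infty$ (valid since $s>n/2$) gives $R:=\|g\|_\infty<\infty$, so $g$ takes values in $[-R,R]$. Choosing $\varphi\in C_0^\infty(\re)$ with $\varphi\equiv 1$ on $[-R,R]$, the hypothesis $f\in H^{s,\ell oc}$ yields $h:=f\varphi\in H^s(\re)$, which is compactly supported with $h(0)=0$ and satisfies $f\circ g=h\circ g$. Since $s>3/2$ we have $H^s(\re)\hookrightarrow C^1$, hence $h$ is Lipschitz, and together with $h(0)=0$ this already gives the low-order bound $\|h\circ g\|_{L_2}\le \|h'\|_\infty\|g\|_{L_2}$.

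The hard part will be the homogeneous top-order contribution, that is, controlling the $s$-th order derivatives of $h\circ g$ in $L_2$ by $\|g\|_{H^s}$ (with constants allowed to depend on $R$). I would attack this by a Littlewood--Paley/paraproduct decomposition together with a Fa\`a di Bruno expansion: the terms in which $h$ is differentiated below the top order are benign, since $h'\circ g,h''\circ g,\dots$ stay bounded and the accompanying products of derivatives of $g$ are absorbed by Gagliardo--Nirenberg interpolation inside the algebra $H^s$. The obstruction is the extreme term, formally $(h^{(s)}\!\circ g)\,(\nabla g)^{s}$, where $h$ is differentiated to full order: here $h\in H^s(\re)$ provides only $h^{(s)}\in L_2$ and no $L_\infty$ control, so the classical Meyer--Moser estimates (which demand bounded derivatives of the symbol up to order $s$) fail. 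Following \cite{bms}, the remedy is to pair this factor in a suitable negative-order Sobolev norm, controlled by $h\in H^s$, against the product of derivatives of $g$ controlled by $g\in H^s$; this optimal pairing --- sharper than the crude Fourier bound of Proposition \ref{super}, which would impose far more smoothness on $f$ --- is exactly what produces the endpoint condition $f\in H^{s,\ell oc}$. Passing from the band-limited truncations $S^jg$ to $g$ via the Fatou property of Lemma \ref{basic} then completes the argument.
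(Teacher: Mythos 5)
First, a point of order: the paper does not prove Proposition \ref{compose} at all --- it is quoted from Bourdaud, Moussai and Sickel \cite{bms} as a known characterization --- so there is no in-paper argument to compare yours against. What follows assesses your proposal on its own terms.

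Your necessity argument is correct and essentially complete: $g\equiv 0$ forces $f(0)=0$; the test functions $g(x)=\psi(x)(a+x_1)$ together with the tensor-product lower bound $(1+|\xi|^2)^{s}\ge(1+|\xi_1|^2)^{s}$ give $\theta_1 f(a+\cdot)\in H^{s}(\re)$ for every $a$, and a partition of unity yields $f\in H^{s,\ell oc}(\re)$. The opening reduction of the sufficiency part (replace $f$ by $h=f\varphi$ with $\varphi\equiv 1$ on $[-\|g\|_{L_\infty},\|g\|_{L_\infty}]$, so that $h\in H^{s}(\re)$ is compactly supported, $h(0)=0$, $h\circ g=f\circ g$, and $H^{s}(\re)\hookrightarrow C^1$ gives the $L_2$ bound) is likewise the standard and correct first step.

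The genuine gap is that the top-order estimate --- which is the entire analytic content of the statement --- is announced rather than proved, and the announced route would not go through as described. Concretely: (a) the Fa\`a di Bruno expansion you invoke exists only for integer $s$, whereas the claim is for all real $s>\frac12\max(n,3)$; for fractional $s$ one must work with difference characterizations or paralinearization, and the ``extreme term $(h^{(s)}\circ g)(\nabla g)^{s}$'' has no meaning. (b) Even for integer $s$ that extreme term is not well defined pointwise: $h^{(s)}$ is merely an $L_2(\re)$ equivalence class, and $g$ may be constant on sets of positive measure, so $h^{(s)}\circ g$ depends on the representative; this is precisely why the naive chain-rule approach collapses at the endpoint regularity $f\in H^{s,\ell oc}$. (c) The proposed fix --- ``pair this factor in a suitable negative-order Sobolev norm against the product of derivatives of $g$'' --- is exactly the crux of \cite{bms}, and nothing in your sketch indicates how the one-dimensional $L_2$ information on $h^{(s)}$ is converted into an $L_2(\R)$ bound after composition with an $n$-dimensional $g$ (in \cite{bms} this requires a genuinely different mechanism, not a H\"older-type pairing). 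As written, the sufficiency direction is a plan whose only hard step is deferred to the result being proved.
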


Concerning our example $T_\alpha $ treated above this yields the following:
$T_\alpha$ maps $H^s$ into itself if and only if 
$\alpha > s-3/2$ (instead of $\alpha > s+\frac n2 + \frac{s+ n/2}{s-n/2}$ as required in Corollary \ref{beispiel}).
\\
Corollary \ref{szasz2} and Corollary \ref{beispiel} may be understood as first results about sufficient conditions, not more.
\\

\subsection*{A final remark}

The method employed here  has been used before in connection with composition operators on  Gevrey-modulation spaces 
and modulation spaces of ultradifferentiable functions, see Bourdaud \cite{Bou}, Bourdaud, Reissig, S. \cite{brs}, 
Reich, Reissig, S. \cite{brs}, and Reich \cite{reich2}, for H\"ormander-type spaces $B_{p,k}$ we refer to Jornet, Oliaro \cite{Joroli}. 
It would be desirable to develop this method more systematically.

\end{document}